\newtheorem{theorem}{Theorem}[section]
\newtheorem{corollary}[theorem]{Corollary}
\newtheorem{lemma}[theorem]{Lemma}
\newtheorem{proposition}[theorem]{Proposition}
\newtheorem{definition}[theorem]{Definition}
\newtheorem{example}[theorem]{Example}
\theoremstyle{definition} \theoremstyle{remark}
\numberwithin{equation}{section}
\begin{document}

\title{\large{{\bf Gromov Hyperbolic Graphs  Arising From  Iterations}}}

\author{Shi-Lei Kong, Ka-Sing Lau and Xiang-Yang Wang}

\date{}

\maketitle

\abstract{For a contractive iterated function system (IFS), it is known that there is a natural hyperbolic graph structure  (augmented tree) on the symbolic space of the IFS that reflects the relationship among neighboring cells, and its hyperbolic boundary with the Gromov metric is H\"{o}lder equivalent to the attractor $K$  \cite{Ka,LW1,LW3}. This setup was taken up to study the probabilistic potential theory on $K$ \cite{KLW1,KL}, and the bi-Lipschitz equivalence on $K$ \cite {LL}. In this paper, we formulate a broad class of hyperbolic graphs, called {\it expansive hyperbolic graphs}, to capture the most essential properties from the augmented trees and the hyperbolic boundaries (e.g., the special geodesics,  bounded degree property, metric doubling property,  and H\"older equivalence). We also study a new setup of ``weighted" IFS and investigate its connection with the self-similar energy form in the analysis of fractals.}

\tableofcontents

\renewcommand{\thefootnote}{}

\footnote {{\it 2010 Mathematics Subject Classification}. Primary 28A78; Secondary 28A80.}
\footnote {{\it Keywords}: hyperbolic graph, hyperbolic boundary, compact metric space, doubling, self-similar set.}
\footnote{The research is supported in part by the HKRGC grant, SFB 1283 of the German Research Council \\ and the NSFC (nos. 11971500, 11831007).}
\renewcommand{\thefootnote}{\arabic{footnote}}
\setcounter{footnote}{0}

\vspace{4mm}

\section{Introduction}
\label{sec:1}

\noindent
Let $\{S_i\}_{i=1}^N$ be a contractive iterated function system (IFS) on ${\mathbb R}^d$, and let $K$ be the attractor. It is well-known that the IFS is associated with a finite word space  $ \Sigma^*$ (symbolic space or coding space), and the limit set $\Sigma^\infty$ is used to represent elements in $K$.
With the intention to carry over the probabilistic potential theory to $K$,  Denker and Sato \cite{DS1,DS2} first constructed a special type of Markov chain $\{Z_n\}_{n=0}^\infty$ on $\Sigma^*$ of the Sierpinski gasket (SG), and showed that the Martin boundary of $\{Z_n\}_{n=0}^\infty$ is homeomorphic to the SG. Motivated by this, Kaimanovich \cite{Ka} introduced an ``augmented tree" $(\Sigma^*, E)$ by adding ``horizontal" edges to the coding tree $(\Sigma^*,E_v)$ according to the neighboring cells in each level of the SG. He showed that the graph is hyperbolic in the sense of Gromov \cite{Gr,Wo}, and the SG equipped with the Euclidean metric is H\"older equivalent to the hyperbolic boundary of $(\Sigma^*, E)$.  He also suggested that this approach would also work for more general IFSs, and could provide another tool to study the geometry and analysis  on fractals.

\bigskip

These initiatives were carried out by the authors in a series of papers \cite{LW1, LW2, LW3, JLW, Wa, KLW1, KLW2, KL}. In \cite{LW1,LW3}, we investigated the systems of neighboring cells to general IFSs on which the hyperbolicity of the augmented trees is valid;  it was shown that the hyperbolic boundaries and the attractors  $K$ are H\"{o}lder equivalent (or homeomorphic) under various circumstances. The H\"older equivalence was applied to the bi-Lipschitz classification of the totally disconnected self-similar sets \cite{LL,DLL}. More importantly, this setup was taken up to study the probabilistic potential theory on the self-similar sets  \cite{KLW1}: for an augmented tree $(X,E)$ defined by an IFS on $\mathbb{R}^d$ satisfying the open set condition, we introduced a class of reversible transient random walks on  $(X, E)$.  By identifying the Matin boundary, the hyperbolic boundary and $K$, we obtain an induced Dirichlet form of Gagliardo-type on $K$.
The relevant energy forms and function spaces were further studied in \cite{KL}.

\bigskip

The augmented tree has very rich structure inherited from the iterations and the attractors. In this paper, our first goal is to formulate a broad class of hyperbolic graphs to capture  the most essential concepts, such as the special geodesics, the bounded degree property, and the doubling property of the Gromov metric on the hyperbolic boundaries, which were used extensively in the study of augmented trees. This new setup allows us to carry out the idea of augmentation much further, and beyond IFS. Besides extending the previous study on the identification of the  hyperbolic boundaries with the attractors $K$, we are able to use the hyperbolic techniques developed to construct new metrics on $K$ that are useful in the analysis of fractals.

\bigskip

 Let $(X, E)$ be a locally finite connected graph with a root $\vartheta \in X$.
 We have the decomposition $E = E_v \cup E_h$, the set of vertical edges (which does not necessarily form a tree) and the set of horizontal edges.
Let ${\mathcal J}_m(x)$ denote the set of descendants in the $m$-th generation of $x \in X$. Denote by $d$ and $d_h$ the graph distances on $(X, E)$ and $(X, E_h)$ respectively, and let $|x| := d(\vartheta,x)$.  We call a rooted graph {\it expansive} if for $x, y \in X$ with $|x| = |y|$,
\begin{equation*}
d_h(x,y) >1 \ \Rightarrow \   d_h(u, v)>1,  \quad \forall \   u\in {\mathcal J}_1(x),\  v\in {\mathcal J}_1(y),
\end{equation*}
and call it {\it $(m, k)$-departing} if
\begin{equation*} 
d_h(x,y) >k \ \Rightarrow \ d_h(u,v) > 2k, \quad \forall \ u\in {\mathcal J}_m(x), \ v\in {\mathcal J}_m(y).
\end{equation*}
Intuitively, the two definitions describe the distances of the descendants that are drifted apart compared to the non-neighboring  predecessors.
The expansive property provides a rather simple form of geodesics ({\it convex geodesics}, see Proposition \ref{th2.3}, Figure \ref {fig:2}), and the $(m,k)$-departing property gives more details of how the graph evolves. These two lead to the following criteria for the hyperbolicity (Theorem \ref{th2.11}), which are crucial in the paper.

\begin{theorem} \label{th0.1}
Let $(X,E)$ be an expansive graph. The following assertions are equivalent.

\ \ (i) $(X,E)$ is hyperbolic;

\ (ii) $\exists$ $L<\infty$ such that the lengths of all horizontal geodesics are
 bounded by $L$;

(iii) $(X,E)$ is $(m,k)$-departing for some positive integers $m$ and $k$.

\end{theorem}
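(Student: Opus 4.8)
The plan is to prove the cycle of implications $(i) \Rightarrow (ii)$, $(ii) \Rightarrow (iii)$ and $(iii) \Rightarrow (ii)$, together with the more technical converse $(ii) \Rightarrow (i)$; the three statements are then equivalent. Throughout I would exploit the convex geodesic description of Proposition \ref{th2.3}: a geodesic joining $x$ and $y$ first ascends vertically, then runs along a single horizontal geodesic at the confluent level, then descends. Consequently the Gromov product $(x|y)_\vartheta$ agrees, up to an additive error controlled by the horizontal geodesic lengths, with the level of the nearest common ancestor of $x$ and $y$.

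For $(i) \Rightarrow (ii)$ I would argue by thin triangles. Suppose $(X,E)$ is $\delta$-hyperbolic and let $\gamma = [x,y]$ be a horizontal geodesic at level $n$ with midpoint $z$, so $|z| = n$ and $d(x,z) = d(z,y) = \tfrac12\,\mathrm{len}(\gamma)$. Consider the geodesic triangle on $\vartheta, x, y$ whose third side is $\gamma$. By $\delta$-thinness, $z$ lies within $\delta$ of $[\vartheta,x] \cup [\vartheta,y]$; since every vertex of $[\vartheta,x]$ within distance $\delta$ of the level-$n$ point $z$ sits at level $\ge n-\delta$, and hence within $\delta$ of $x$ along the vertical side, the triangle inequality gives $\tfrac12\,\mathrm{len}(\gamma) = d(z,x) \le 2\delta$. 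Thus all horizontal geodesics are bounded, with $L = 4\delta$.

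The converse $(ii) \Rightarrow (i)$ is the standard but technical direction: assuming horizontal geodesics are bounded by $L$, I would show the confluent level is a Gromov product up to an additive $O(L)$, and verify the four-point inequality $(x|z)_\vartheta \ge \min\{(x|y)_\vartheta,(y|z)_\vartheta\} - O(L)$ directly from the near-ultrametric behaviour of nearest common ancestors in $(X,E_v)$, yielding hyperbolicity. For $(ii) \Rightarrow (iii)$ I would then use this confluent estimate quantitatively. If $d_h(x,y) > k$ with $k$ chosen larger than $L$, the horizontal path between $x$ and $y$ is too long to be geodesic, so their nearest common ancestor sits at least one level above $n$; any descendants $u \in \mathcal{J}_m(x)$, $v \in \mathcal{J}_m(y)$ share that same common ancestor but lie $m$ levels lower, so their confluent depth increases by $m$ and the estimate gives $d_h(u,v) \ge d(u,v) \ge 2m - O(L)$. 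Choosing $m$ large forces the right side above $2k$, establishing $(m,k)$-departing.

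The step I expect to be the main obstacle is $(iii) \Rightarrow (ii)$, which closes the cycle. Here I would fix a horizontal geodesic $\gamma$ at level $n$ of length $\ell$ and track the horizontal distances between the ancestors of its endpoints across successive $m$-blocks up toward the confluent level. The delicate point is that the departing inequality propagates only a \emph{fixed} separation ($>2k$) rather than a proportional one, so a naive comparison with the shortcut bound $\ell \le 2J+1$ (with $J$ the confluent depth) is vacuous. The crux is therefore to iterate the $(m,k)$-departing property across the confluent tree, re-applying it to the appropriate intermediate pairs of ancestors at each block so that the forced separations accumulate into genuinely geometric growth of the horizontal distance as one descends; this growth must outrun the at-most-additive spreading permitted near the confluent vertex, thereby bounding $J$ in terms of $m$ and $k$ alone, and with it $\ell$. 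Making this accumulation precise, and in particular using the expansive hypothesis to guarantee that the intermediate ancestor pairs remain eligible for the departing inequality at each stage, is the technical heart of the argument.
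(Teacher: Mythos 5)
Your cycle is organized differently from the paper's, and most of it is sound. The thin-triangle proof of (i)$\Rightarrow$(ii) and your sketch of (ii)$\Rightarrow$(i) via confluence levels are correct in substance (the paper simply cites \cite[Theorem 2.3]{LW1} for (i)$\Leftrightarrow$(ii)). Your (ii)$\Rightarrow$(iii) is essentially the paper's own argument, with two repairable inaccuracies: the phrase ``share that same common ancestor'' is not literal, since $(X,E_v)$ need not be a tree --- the paper instead takes a convex geodesic between the descendants $u,v$, notes its turning points are predecessors of $u$ (resp.\ $v$) and hence, by the remark after Definition \ref{de2.1} and \eqref{eq2.2}, horizontally adjacent to the ancestors of $u,v$ sitting below $x,y$; and your threshold ``$k$ larger than $L$'' must be $k \geq L+2$ to absorb the two extra horizontal edges this comparison produces.

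The genuine gap is (iii)$\Rightarrow$(ii), which you explicitly leave unresolved (``making this accumulation precise \dots is the technical heart''). The obstacle you name --- that departing propagates only the fixed separation $>2k$ rather than a proportional one --- is exactly dissolved by a one-paragraph self-improvement lemma that you are missing (Lemma \ref{th2.6} of the paper): $(m,k)$-departing implies $(m,\ell k)$-departing for every positive integer $\ell$. Indeed, if $u \in \mathcal J_m(x)$, $v \in \mathcal J_m(y)$ and $d_h(u,v) \leq 2\ell k$, cut a horizontal path from $u$ to $v$ into $s \leq \ell$ segments of length at most $2k$ each; the contrapositive of $(m,k)$-departing applied to ancestors of consecutive cut points gives $d_h(x,y) \leq sk \leq \ell k$. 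Once the departing property is scalable in this way, no iteration toward the confluent level and no geometric-growth accumulation are needed: the paper's Proposition \ref{th2.10} finishes in one step. If a horizontal geodesic $\pi(x,y)$ had length $L+1$ with $L := \lceil\frac{2m+1}{k}\rceil k + 2m$, then comparing it with the path $x \to x^{(-m)} \to y^{(-m)} \to y$ (up, across, down) yields $d_h(x^{(-m)},y^{(-m)}) \geq L+1-2m > \lceil\frac{2m+1}{k}\rceil k$, and the upgraded departing property then forces $d_h(x,y) > 2\lceil\frac{2m+1}{k}\rceil k \geq L+1$, contradicting $d_h(x,y)=L+1$. Note also that your multi-block scheme could not even sustain its claimed geometric growth without this lemma: a second application of plain $(m,k)$-departing to vertices already separated by $>2k$ returns only $>2k$ again, not $>4k$; passing from $>2k$ to $>4k$ is precisely the $(m,2k)$-departing statement you would need to prove first. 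So the missing idea is not a delicate accumulation argument but the elementary chaining Lemma \ref{th2.6}, after which this direction is the easiest of the three.
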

\noindent (By a {\it horizontal geodesic}, we mean a geodesic in $(X, E)$ consisting of edges in $E_h$ only.)
We will call such $(X,E)$ an {\it expansive hyperbolic graph}: together with the expansiveness, (ii) gives a clear geometry of the geodesics in $X$; the $(m,k)$-departing property in (iii) will serve as the workhorse in many of the proofs in this study.

\medskip

We use $\partial X$ to denote the hyperbolic boundary of $(X,E)$, which is a compact space with an associated metric $\theta_a$, $a>0$ small ({\it Gromov metric}, see Definition \ref{de2.4}).  By using the $(m,k)$-departing property, we obtain a sharp description of the equivalent rays in $X$ that converge to the same boundary elements (Proposition \ref {th3.1}) which will be used in a number of estimates. One of the main results is (Theorem \ref{th3.6})

\begin{theorem} \label{th0.2} 
Suppose $(X,E)$ is an expansive hyperbolic graph, and has bounded degree (i.e., $\sup_{x\in X}\deg(x) < \infty$).  Then  $(\partial X, \theta_a)$ is a doubling metric space.
\end{theorem}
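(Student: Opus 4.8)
The plan is to reduce the doubling property to a counting estimate for the boundary ``shadows'' of vertices, exploiting that bounded degree forces each vertex to have boundedly many descendants. For $x\in X$, write $\Omega_x:=\{\xi\in\partial X:\ \xi$ has a representative geodesic ray through $x\}$ for the \emph{shadow} of $x$; since every boundary point is the limit of a ray and a ray meets every level, $\partial X=\bigcup_{|x|=n}\Omega_x$ for each $n$. The first task is to compare $\theta_a$-balls with shadows. Recall (Definition~\ref{de2.4}) that $\theta_a(\xi,\eta)\asymp a^{(\xi\mid\eta)}$, where the Gromov product $(\xi\mid\eta)$ equals, up to a bounded additive error, the largest level $n$ at which representative rays to $\xi$ and $\eta$ admit level-$n$ vertices $u,v$ with $d(u,v)$ bounded. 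Using the convex-geodesic description (Proposition~\ref{th2.3}) and the sharp control of equivalent rays (Proposition~\ref{th3.1}), being $d$-close at a common level is equivalent to being $d_h$-close, so there is a constant $C$ with the following property: if $\theta_a(\xi,\eta)\le a^n$ then the level-$n$ vertices $u,v$ on suitable representative rays satisfy $d_h(u,v)\le C$, and conversely $d_h(u,v)\le C$ forces $\theta_a(\xi,\eta)\lesssim a^n$. In particular $\diam_{\theta_a}\Omega_x\le C_2\,a^{|x|}$ for a uniform constant $C_2$.

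Granting this dictionary, fix $\xi\in\partial X$ and $r\in(0,1]$ and choose $n$ with $a^{n+1}<r\le a^{n}$. Let $x_n$ be the level-$n$ vertex on a fixed representative ray of $\xi$. By the comparison above, every $\eta\in B(\xi,r)$ has a representative ray whose level-$n$ vertex $v$ lies in the horizontal ball $B_{d_h}(x_n,C)$, whence $B(\xi,r)\subseteq\bigcup_{v}\Omega_v$, the union taken over $v\in B_{d_h}(x_n,C)$. Since $\sup_{x}\deg(x)=:D<\infty$, every $d_h$-ball of radius $C$ has cardinality at most $\kappa:=\sum_{j=0}^{C}D^{j}$, so $B(\xi,r)$ is already covered by at most $\kappa$ shadows at level $n$.

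It remains to subdivide each level-$n$ shadow into boundedly many shadows of $\theta_a$-diameter at most $r/2$. Choose once and for all an integer $p$ with $a^{\,p-1}\le(2C_2)^{-1}$; then for $|w|=n+p$ we have $\diam_{\theta_a}\Omega_w\le C_2a^{n+p}\le a^{n+1}/2<r/2$, and $p$ depends only on $a$ and $C_2$, not on $n$ or $\xi$. Every $\eta\in\Omega_v$ has a representative ray that reaches level $n+p$ at some vertex $w$, so $\Omega_v=\bigcup_w\Omega_w$ over those level-$(n+p)$ vertices $w$ lying on rays through $v$. By the convex-geodesic structure each such $w$ is reached from $v$ by $p$ vertical descents interleaved with horizontal excursions of bounded total length, so their number is at most some $N=N(D,C,p)$, independent of $n$ and $v$. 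Consequently $B(\xi,r)$ is covered by at most $\kappa N$ shadows $\Omega_w$ with $|w|=n+p$, each contained in a $\theta_a$-ball of radius $r/2$. As $\kappa N$ is independent of $\xi$ and $r$, the space $(\partial X,\theta_a)$ is doubling.

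The step I expect to demand the most care is the metric-to-combinatorics dictionary of the first paragraph, namely turning the analytic inequality $\theta_a(\xi,\eta)\le a^{n}$ into the purely combinatorial statement $d_h(u,v)\le C$ at level $n$ with a \emph{single} constant $C$, and the companion uniform bound $\diam_{\theta_a}\Omega_x\le C_2a^{|x|}$. This is precisely where expansiveness and hyperbolicity are indispensable: Proposition~\ref{th2.3} confines geodesics between same-level vertices to bounded vertical depth, while the bounded length of horizontal geodesics (Theorem~\ref{th0.1}(ii)) together with Proposition~\ref{th3.1} keeps equivalent rays inside a fixed $d_h$-tube; without these the shadows could spread or overlap unboundedly and every count in the covering argument would break down.
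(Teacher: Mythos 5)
Your proposal is correct and follows essentially the same route as the paper's own proof: your shadows $\Omega_x$ are the paper's cells $\mathcal J_\partial(x)$, your ``metric-to-combinatorics dictionary'' and horizontal-ball covering are precisely the $k$-shadow $\mathcal J^k_\partial(x)$ estimates of Propositions \ref{th3.1}--\ref{th3.3}, and the two-scale covering (same-level shadows, then descendants a fixed number $p$ of levels deeper) with the bounded-degree count is exactly the argument of Theorem \ref{th3.6}. The only blemishes are cosmetic: the Gromov metric satisfies $\theta_a \asymp e^{-a(\xi|\eta)}$ rather than $a^{(\xi|\eta)}$, and the level-$(n+p)$ vertices on rays through $v$ are just the vertical descendants $\mathcal J_p(v)$ (at most $\deg^p$ of them), so no ``horizontal excursions'' enter that count.
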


\medskip
Recall that an {\it augmented tree} of an IFS  with attractor $K$ is based on the tree of the symbolic space, together with the added horizontal edges  that connect neighboring cells in the same level (see Appendix).  This can easily be reformulated on any vertical rooted graph $(X,E_v)$:
let $(M, \rho)$ be a complete metric space, and let ${\mathcal C}_M$ denote the family of nonempty compact subsets of $M$.  We define an {\it index map} $\Phi: X \to {\mathcal C}_M$  that satisfies $\Phi(y) \subset \Phi(x)$ whenever $y \in \mathcal J_1(x)$, and $\bigcap_{i=0}^\infty \Phi(x_i)$ is a singleton for any geodesic ray $[x_i]_i$ from the root $\vartheta$ (see Definition \ref{de4.1}); likewise, we also have an attractor $K$.  This setup  is very general, which includes  all IFSs  (where $\Phi(x)$ is defined as the cell $K_x$), as well as cases that are not from IFS, e.g., refinement systems of sets.

\medskip

With the vertical graph $(X, E_v)$ and the index map $\Phi$, we define
\begin{equation*}
E_h^{(\infty)} := \big\{(x,y) \in X \times X: |x| = |y|, \ x \neq y, \hbox{ and } \Phi(x)\cap \Phi(y)\not = \emptyset \big\}.
\end{equation*}
Let $E^{(\infty)}= E_v \cup E_h^{(\infty)}$, and call $(X, E^{(\infty)})$ an $AI_\infty$-graph  ({\it augmented index graph of type-$(\infty)$}, or {\it intersection type}). In the case that $\{\Phi(x)\}_{x\in X}$ is of exponential type-$(b)$ (i.e., the diameter $|\Phi(x)|_\rho = O(e^{-b|x|})$ for some $b>0$, as $|x| \to \infty$), for some fixed $\gamma >0$, we define a horizontal edge set by
\begin{equation*}
 E^{(b)}_h (= E^{(b)}_h(\gamma)) :=\big\{(x,y) \in X \times X: |x| = |y|, \ x \neq y, \hbox{ and } {\rm dist}_\rho (\Phi(x),\Phi(y)) \leq \gamma e^{-b|x|}\big\},
\end{equation*}
  Let $E^{(b)} = E_v \cup E^{(b)}_h$, and  call  $(X, E^{(b)})$ an $AI_b$-graph ({\it augmented index graph of type-$(b)$}).

 \vspace {0.1cm}
Both $AI_\infty$- and $AI_b$-graphs are expansive.
The condition that defines an $AI_\infty$-graph is more intuitive, and $E_h^{(\infty)}$ consists of fewer edges.
 However, concerning the hyperbolicity, the $AI_b$-graph has the advantage that we do not need to know the fine structure of $K$ a priori.  More precisely, by using Theorem \ref{th0.1}, we show that (Theorem \ref{th4.5})

\medskip

\begin{theorem}  \label{th0.3}
The $AI_b$-graph is  $(m,1)$-departing for some $m\geq 1$, and hence hyperbolic. Moreover,  the index map $\Phi$ induces a bijection $\kappa: \partial X \to K$ that is a H\"older equivalence, i.e., $\rho(\kappa(\xi),\kappa(\eta))^{a/b} \asymp \theta_a(\xi,\eta)$ for all $\xi,\eta \in \partial X$.
\end{theorem}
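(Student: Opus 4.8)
The plan is to first extract the combinatorial $(m,1)$-departing condition from the metric hypothesis, invoke Theorem \ref{th0.1} for hyperbolicity, and then pin down $\kappa$ by comparing the $\rho$-separation of the intersection points with the Gromov product.

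\emph{Departing property and hyperbolicity.} Fix $c_0>0$ with $|\Phi(x)|_\rho\le c_0e^{-b|x|}$ for all $x$ (exponential type-$(b)$), and choose $m\ge 1$ so large that $\gamma e^{bm}>2\gamma+c_0$. Suppose $|x|=|y|$ and $d_h(x,y)>1$; then $x\ne y$ and $(x,y)\notin E_h^{(b)}$, so $\dist_\rho(\Phi(x),\Phi(y))>\gamma e^{-b|x|}$. For $u\in\mathcal J_m(x)$, $v\in\mathcal J_m(y)$ we have $\Phi(u)\subset\Phi(x)$, $\Phi(v)\subset\Phi(y)$ and $|u|=|v|=|x|+m$, whence
\begin{equation*}
\dist_\rho(\Phi(u),\Phi(v))\ \ge\ \dist_\rho(\Phi(x),\Phi(y))\ >\ \gamma e^{bm}e^{-b|u|}.
\end{equation*}
If instead $d_h(u,v)\le 2$, take a horizontal path $u,w,v$ of length $\le 2$ at level $|u|$; the triangle inequality for $\dist_\rho$ together with $|\Phi(w)|_\rho\le c_0e^{-b|u|}$ gives $\dist_\rho(\Phi(u),\Phi(v))\le(2\gamma+c_0)e^{-b|u|}$, contradicting the choice of $m$. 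Hence $d_h(u,v)>2$, i.e.\ $(X,E^{(b)})$ is $(m,1)$-departing, and Theorem \ref{th0.1} yields hyperbolicity.

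\emph{Definition of $\kappa$ and reduction.} Each $\xi\in\partial X$ is the limit of a geodesic ray which, by the convex geodesic structure (Proposition \ref{th2.3}) and the description of equivalent rays (Proposition \ref{th3.1}), may be taken vertical, $[x_i]_i$ with $x_{i+1}\in\mathcal J_1(x_i)$. Then $\{\Phi(x_i)\}_i$ is nested with $|\Phi(x_i)|_\rho\le c_0e^{-bi}\to 0$, so $\bigcap_i\Phi(x_i)=\{\kappa(\xi)\}$ defines a point of $K$, and Proposition \ref{th3.1} shows equivalent rays give the same point, so $\kappa$ is well-defined. Since $\theta_a(\xi,\eta)\asymp e^{-a(\xi|\eta)}$, the asserted Hölder equivalence reduces to the single estimate $\rho(\kappa(\xi),\kappa(\eta))\asymp e^{-b(\xi|\eta)}$. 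Write $p=\kappa(\xi)$, $q=\kappa(\eta)$, with vertical rays $[x_i]_i,[y_i]_i$. By the convex geodesic structure and the bound $L$ on horizontal geodesics (Theorem \ref{th0.1}(ii)), $(\xi|\eta)=\ell+O(1)$, where $\ell$ is the last level at which ancestors $x_\ell,y_\ell$ are joined by a horizontal geodesic of length $\le L$. For the upper bound, chaining the sets $\Phi(w)$ along this path (each of diameter $\le c_0e^{-b\ell}$, each gap $\le\gamma e^{-b\ell}$) and using $p\in\Phi(x_\ell)$, $q\in\Phi(y_\ell)$ gives $\rho(p,q)\le(L+1)(c_0+\gamma)e^{-b\ell}\asymp e^{-b(\xi|\eta)}$. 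For the lower bound, the $(m,1)$-departing property forces $d_h(x_{\ell+m},y_{\ell+m})>1$, so $(x_{\ell+m},y_{\ell+m})\notin E_h^{(b)}$, and since $p\in\Phi(x_{\ell+m})$, $q\in\Phi(y_{\ell+m})$,
\begin{equation*}
\rho(p,q)\ \ge\ \dist_\rho\big(\Phi(x_{\ell+m}),\Phi(y_{\ell+m})\big)\ >\ \gamma e^{-b(\ell+m)}\ \asymp\ e^{-b(\xi|\eta)}.
\end{equation*}

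Combining the two bounds gives the displayed estimate, hence the Hölder equivalence. Injectivity of $\kappa$ is then immediate (if $\xi\ne\eta$ then $\theta_a(\xi,\eta)>0$, forcing $\rho(p,q)>0$), surjectivity onto $K$ follows since every point of $K$ is $\bigcap_i\Phi(x_i)$ along some vertical ray, and continuity in both directions is built into $\asymp$; thus $\kappa$ is a bijective Hölder equivalence. The main obstacle is the middle step—identifying $(\xi|\eta)$ with the confluence level $\ell$ and securing the lower bound—because $(X,E_v)$ need not be a tree, so the notion of confluent ancestors and the claim that once-separated rays stay separated must be justified carefully; this is precisely where the $(m,1)$-departing property is indispensable, as it both fixes $\ell$ and forces the genuine $\rho$-separation $\gtrsim e^{-b\ell}$ at level $\ell+m$.
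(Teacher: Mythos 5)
Your proposal is correct and follows essentially the same route as the paper's proof (Theorem \ref{th4.5}): establish the $(m,1)$-departing property via nestedness of the $\Phi$-cells and the triangle inequality with $m$ chosen so that $\gamma e^{bm} > 2\gamma+\delta_0$, invoke Theorem \ref{th0.1} for hyperbolicity, and obtain the two-sided H\"older estimate by comparing $\rho(\kappa(\xi),\kappa(\eta))$ with $e^{-bn}$ at the confluence level of the two rays via Proposition \ref{th3.1}. The only cosmetic differences are that the paper uses the level $n=|{\bf x}\vee{\bf y}|_1$ (last level with $d_h\le 1$) rather than your level defined through horizontal geodesics of length $\le L$, and it verifies bijectivity of $\kappa$ directly from the equivalence-of-rays criterion \eqref{eq4.3} rather than deducing injectivity from the lower metric bound.
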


\medskip

(Here by $f \asymp g$ we mean that there exists $C\geq 1$ such that $C^{-1} f(x) \leq g(x) \le Cf(x)$ for all variables $x$ in a given domain.) The $AI_\infty$-graph is not always hyperbolic (Example \ref{ex6.1}); in order to have that,  we need an additional separation condition $(S_b)$ on $\{\Phi(x)\}_{x \in X}$  (Definition \ref {de5.1}, which is satisfied by  IFS of similitudes with the OSC).

\medskip

\begin{theorem} \label{th0.4}
Suppose $\{\Phi(x)\}_{x \in X}$ is of exponential type-(b), and satisfies condition $(S_b)$ for some $b>0$. Then the $AI_\infty$-graph is hyperbolic, and the induced bijection $\kappa: \partial X \to K$ is H\"older continuous, i.e., $\rho(\kappa(\xi),\kappa(\eta))^{a/b} \leq C \theta_a(\xi,\eta)$ for all $\xi,\eta \in \partial X$.
\end{theorem}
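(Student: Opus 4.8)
\medskip
\noindent\textbf{Proof strategy.}
The plan is to first establish that the $AI_\infty$-graph is hyperbolic by verifying criterion (ii) of Theorem \ref{th0.1}, and then to read off the H\"older estimate for $\kappa$ by comparing its Gromov metric with that of the ambient $AI_b$-graph. The starting observation is that intersecting cells lie within every positive distance threshold, so $E_h^{(\infty)} \subseteq E_h^{(b)}$; thus the $AI_\infty$-graph is a spanning subgraph of the $AI_b$-graph sharing exactly the same vertical edges, and consequently $d^{(b)}(x,y) \le d^{(\infty)}(x,y)$ for all $x,y$. Both graphs are expansive, so Theorem \ref{th0.1} applies to each. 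Moreover, since the two graphs share the same $E_v$ and horizontal edges preserve the generation while each edge changes it by at most one, the shortest path to the root never profits from a horizontal edge; hence $|x| = d(\vartheta,x)$ equals the generation of $x$ in \emph{both} graphs and takes the same value, a fact I will use freely.

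The crux is the following linking lemma, which is exactly where condition $(S_b)$ (Definition \ref{de5.1}) enters: there is a constant $D_0$, independent of the level $\ell$, such that whenever $|x| = |y| = \ell$ and $\dist_\rho(\Phi(x),\Phi(y)) \le \gamma e^{-b\ell}$ (that is, $(x,y) \in E_h^{(b)}$), the vertices $x$ and $y$ lie in one $E_h^{(\infty)}$-component with $d_h^{(\infty)}(x,y) \le D_0$. Intuitively $(S_b)$ forces the region separating two metrically close level-$\ell$ cells to be covered by a chain of mutually intersecting level-$\ell$ cells lying in a ball of radius $\lesssim e^{-b\ell}$; since by exponential type each such cell has diameter $\asymp e^{-b\ell}$, a packing estimate caps the number of links uniformly in $\ell$. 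This is the step that fails in general, and is the main obstacle of the proof: Example \ref{ex6.1} shows that without separation two cells can be arbitrarily close yet require unboundedly many intersection-hops (or fall in distinct $E_h^{(\infty)}$-components), breaking criterion (ii).

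Granting the linking lemma, hyperbolicity follows quickly. Because the $AI_b$-graph is hyperbolic (Theorem \ref{th0.3}), criterion (ii) of Theorem \ref{th0.1} bounds the lengths of all $E_h^{(b)}$-horizontal geodesics by some $L_b$, i.e.\ $d_h^{(b)}(x,y) \le L_b$ for every same-level pair in one horizontal component. Given an $E_h^{(\infty)}$-horizontal geodesic with endpoints $x,y$, these endpoints are also $E_h^{(b)}$-connected, so $d_h^{(b)}(x,y) \le L_b$; replacing each of the $\le L_b$ edges of an $E_h^{(b)}$-geodesic by an $E_h^{(\infty)}$-path of length $\le D_0$ from the lemma gives $d_h^{(\infty)}(x,y) \le D_0 L_b =: L_\infty$. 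Hence all $E_h^{(\infty)}$-horizontal geodesics have length $\le L_\infty$, and Theorem \ref{th0.1} shows the $AI_\infty$-graph is hyperbolic (and $(m,k)$-departing for suitable $m,k$). The map $\kappa\colon \partial X \to K$, sending a boundary ray $[x_i]_i$ to the singleton $\bigcap_i \Phi(x_i)$, is inherited from the identical vertical structure and index map; surjectivity comes from the covering as in the $AI_b$ case, while injectivity is here \emph{automatic}, since two rays with common limit $p$ satisfy $p \in \Phi(x_i)\cap\Phi(y_i)$ for all $i$, so $x_i,y_i$ are $E_h^{(\infty)}$-adjacent or equal, and Proposition \ref{th3.1} makes the rays equivalent.

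Finally, the H\"older estimate is a soft consequence of the subgraph relation. Writing $(x\mid y) = \tfrac12(|x|+|y|-d(x,y))$ for the Gromov product and using $d^{(b)} \le d^{(\infty)}$ together with the coincidence of $|x|$, I get $(x\mid y)^{(\infty)} \le (x\mid y)^{(b)}$, which passes to the boundary (up to the usual additive $\delta$-ambiguity, absorbed into constants) and, via $\theta_a \asymp e^{-a(\,\cdot\mid\cdot\,)}$ for small $a$ (Definition \ref{de2.4}), yields $\theta_a^{(\infty)}(\xi,\eta) \gtrsim \theta_a^{(b)}(\xi,\eta)$ after identifying both boundaries with $K$ through $\kappa$. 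Combining with the equivalence $\theta_a^{(b)}(\xi,\eta) \asymp \rho(\kappa(\xi),\kappa(\eta))^{a/b}$ of Theorem \ref{th0.3} gives $\rho(\kappa(\xi),\kappa(\eta))^{a/b} \le C\,\theta_a^{(\infty)}(\xi,\eta)$, precisely the asserted H\"older continuity. Only this one inequality survives: deleting the extra edges of $E_h^{(b)}$ can push two boundary points apart in $\theta_a^{(\infty)}$ while their images in $K$ stay close, so the reverse bound --- genuine H\"older equivalence --- need not hold for the $AI_\infty$-graph, in contrast with Theorem \ref{th0.3}.
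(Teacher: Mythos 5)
Your second half (the H\"older estimate) is correct and is essentially the paper's own argument in Corollary \ref{th4.7}: from $E_h^{(\infty)} \subset E_h^{(b)}$ one gets $d^{(b)} \le d^{(\infty)}$, hence a comparison of Gromov products on the common boundary, and Theorem \ref{th4.5} converts this into the one-sided bound. The fatal problem is the first half: your ``linking lemma'' is false under the hypotheses of the theorem, and condition $(S_b)$ cannot rescue it. Condition $(S_b)$ is only an \emph{upper} bound on how many level-$n$ cells can meet a set of diameter $\lesssim e^{-bn}$; it carries no connectivity information, so it cannot force two metrically close cells to be bridged by a chain of pairwise intersecting cells --- when $K$ is (locally) totally disconnected no such chain need exist at all. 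The paper's own Example \ref{ex6.2} is a concrete counterexample: that IFS satisfies the OSC, hence $(S_b)$, yet for every $\ell$ it exhibits same-level pairs $u,v$ with ${\rm dist}_\rho(\Phi(u),\Phi(v)) < 4^{-\ell-2}e^{-b|u|}$ while $d_h^{(\infty)}(u,v)=\infty$ (neither cell meets any other cell of its level). Thus for any fixed $\gamma>0$, as soon as $4^{-\ell-2}<\gamma$ the pair $(u,v)$ is an edge of $E_h^{(b)}$ but $u$ and $v$ lie in different $E_h^{(\infty)}$-components, so no constant $D_0$ can exist. (The middle-third Cantor set, with $\gamma \ge 1$, is an even simpler counterexample: $E_h^{(\infty)}$ is empty while $E_h^{(b)}$ is not.)

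There is a second, independent gap: even granting the lemma, your step ``$x,y$ lie in one $E_h^{(b)}$-component, so $d_h^{(b)}(x,y)\le L_b$'' misreads criterion (ii) of Theorem \ref{th2.11}. That criterion bounds only the lengths of \emph{horizontal geodesics}, i.e.\ horizontal paths that are geodesics of the full graph; it does not bound horizontal distances inside a horizontal component (in the SG graph of Example \ref{ex2.8}, $d_h$ between two level-$n$ vertices can be of order $2^n$ even though all horizontal geodesics have length $\le L$). The paper's actual proof of hyperbolicity (Theorem \ref{th5.4}) does not route through the $AI_b$-graph at all: assuming non-hyperbolicity, it takes an arbitrarily long horizontal geodesic $[x_0,\dots,x_{3m}]$ in the $AI_\infty$-graph itself, uses expansiveness to lift it $m$ levels up to an induced horizontal path with at least $m+1$ distinct vertices, shows via the exponential type that all the corresponding cells lie in a ball of radius $O(e^{b(m-n)})$, and then contradicts $(S_b)$, whose bound $\bar\ell(c)$ is independent of $m$. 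A final minor point: for $\kappa$ to be well defined on $\partial X$ you must also show that \emph{equivalent rays have the same limit point} --- direction (i)$\Rightarrow$(ii) of Proposition \ref{th4.6}, which requires an induction together with a diagonal argument; the injectivity direction you verify is the easy one.
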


\medskip

The proof of the theorem is in Theorem \ref{th5.4} and Corollary \ref{th4.7}.  We also provide an example (Example \ref {ex6.2}) to show that unlike the $AI_b$-graph, the H\"older continuity of $\kappa$ for the $AI_\infty$-graph cannot be improved to  H\"older equivalence.

\medskip

For the augmented tree of an IFS, the {\it bounded degree} property is important because it allows us to consider certain random walks on it \cite {KLW1,KL}. This property has been characterized in terms of the separation properties such as the open set condition and weak separation condition for  IFSs \cite {LW3, Wa}. In Section \ref{sec:5}, we prove (Theorem \ref{th5.5})

\medskip

\begin{theorem} \label{th0.5}
The $AI_{b}$-graph has bounded degree if and only if  condition ($S_b$) is satisfied.
Also, the $AI_\infty$-graph has bounded degree provided that ($S_b$) is satisfied.
\end{theorem}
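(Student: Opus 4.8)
The plan is to split the degree of a vertex into its vertical and horizontal contributions, to realize the $AI_\infty$-graph as a horizontal subgraph of the $AI_b$-graph, and to convert the geometric content of $(S_b)$ into a packing bound on clustered cells. Throughout, write $n=|x|$, let $c_0$ be the exponential type-$(b)$ constant so that $|\Phi(z)|_\rho\le c_0e^{-b|z|}$ for all $z$, and let $\deg_v(x)$ and $\deg_h(x)$ denote the numbers of vertical and horizontal neighbours of $x$, so that $\deg(x)=\deg_v(x)+\deg_h(x)$.

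First I would dispose of the $AI_\infty$-graph. Since $\Phi(x)\cap\Phi(y)\neq\emptyset$ forces ${\rm dist}_\rho(\Phi(x),\Phi(y))=0\le\gamma e^{-b|x|}$, every edge of $E_h^{(\infty)}$ is an edge of $E_h^{(b)}$, while the vertical edge sets coincide. Hence $\deg_{AI_\infty}(x)\le\deg_{AI_b}(x)$ for every $x$, and the second assertion of the theorem follows at once from the $AI_b$-statement. It therefore remains to prove that the $AI_b$-graph has bounded degree if and only if $(S_b)$ holds.

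For the implication $(S_b)\Rightarrow$ bounded degree, fix $p\in\Phi(x)$. If $y$ is a horizontal neighbour of $x$, then $|y|=n$ and, by the triangle inequality, $\rho(p,\Phi(y))\le|\Phi(x)|_\rho+{\rm dist}_\rho(\Phi(x),\Phi(y))\le(c_0+\gamma)e^{-bn}$; thus the cells of all horizontal neighbours lie within distance $(c_0+\gamma)e^{-bn}$ of the single point $p$. The precise role of $(S_b)$ (Definition \ref{de5.1}) is exactly to bound, uniformly in the level and the centre, the number of same-level cells clustering this closely to a point, which bounds $\deg_h(x)$. The vertical degree is controlled the same way: each child $y\in\mathcal J_1(x)$ satisfies $\Phi(y)\subseteq\Phi(x)\subseteq B(p,c_0e^{-bn})$, so, rewriting $c_0e^{-bn}=c_0e^{b}\,e^{-b(n+1)}$, the children's cells cluster near $p$ at the level-$(n+1)$ scale and $(S_b)$ bounds their number; likewise every predecessor $x'$ of $x$ has $\Phi(x)\subseteq\Phi(x')$, so the cells $\Phi(x')$ all contain a common point of $\Phi(x)$ and $(S_b)$ again bounds their number. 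Summing gives a uniform bound on $\deg(x)$.

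For the converse I would argue by contraposition. If $(S_b)$ fails, then for every $L$ there are a level $n$, a centre, and $L$ distinct level-$n$ cells lying within distance $Ce^{-bn}$ of that centre for a fixed $C$; by the triangle inequality these cells are pairwise within $\gamma e^{-bn}$ of one another (once the clustering constant of $(S_b)$ is fixed to absorb the factor of two and the diameter term $c_0e^{-bn}$), so they are pairwise joined by edges of $E_h^{(b)}$, and each of them then has at least $L-1$ horizontal neighbours. Letting $L\to\infty$ produces vertices of arbitrarily large degree, so the graph is not of bounded degree. The main obstacle is the packing step in the forward implication: turning the geometric separation of $(S_b)$ into a genuine uniform count of mutually clustered, comparably sized cells in a region of diameter of order $e^{-bn}$, and keeping the constants consistent across the level shift from $n$ to $n+1$ and across the enlargement of the radius from $\gamma$ to $c_0+\gamma$. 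Matching exactly these constants in the converse is what makes that direction quantitatively valid, and is where the formulation of $(S_b)$ in Definition \ref{de5.1} must be used with care.
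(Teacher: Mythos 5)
Your reduction of the $AI_\infty$ statement to the $AI_b$ statement, and your proof of the sufficiency direction ($(S_b)\Rightarrow$ bounded degree), are correct and essentially the paper's argument: one encloses the cells of the children, the parents, and the horizontal neighbours of a fixed vertex in sets of diameter $O(e^{-b\,(\mathrm{level})})$ and invokes $(S_b)$ three times. (One small point of care: for the predecessors you must take the common point in $K_x=\Phi(x)\cap K$, not merely in $\Phi(x)$, since $(S_b)$ counts the cells $K_{x'}$ rather than the sets $\Phi(x')$; this is available because the standing assumption $\mathcal J_\partial(x)\neq\emptyset$ makes $K_x$ nonempty.)

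The necessity direction, however, has a genuine gap, and it is not the constant-matching issue you describe. Condition $(S_b)$ is quantified over \emph{all} $c>0$: its negation produces, for \emph{some} $c>0$ (possibly far larger than the edge parameter $\gamma$), levels $n$ and sets $F$ with $|F|_\rho<ce^{-bn}$ meeting arbitrarily many level-$n$ cells. Those cells are pairwise within $ce^{-bn}$ of one another, but they may be pairwise at distance greater than $\gamma e^{-bn}$, in which case \emph{no} edges of $E_h^{(b)}$ join them and no vertex of large degree is produced. No choice of constants repairs this: bounded degree directly controls clustering only at the single scale $\gamma e^{-bn}$ (this is precisely the paper's condition $(S_b'')$, obtained via a projection $\iota$), and promoting a single-scale bound to a bound at all scales $c$ is a packing statement that is false in a general metric space (think of infinitely many points pairwise at distance $1$ inside a bounded set). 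The paper supplies the missing ingredient: bounded degree together with Theorem \ref{th4.5} and Theorem \ref{th3.6} shows that $(\partial X,\theta_a)$ is doubling, hence so is $(K,\rho)$ by the H\"older equivalence $\kappa$; then Proposition \ref{th5.3}, whose proof uses the doubling property together with a Brooks'-theorem colouring argument, upgrades $(S_b'')$ to the full condition $(S_b)$. Without this doubling step your contraposition cannot be completed.
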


In our previous consideration of IFS,  the structure of augmented trees (or $AI_\infty$- and $AI_b$-graphs here) arose from the geometry of $K_x$ under a given metric (usually Euclidean metric). In the analysis of fractals, there are situations that involve weighted IFS,  which give rise to new metrics on $K$ (e.g., the resistance metric in the study of Dirichlet form \cite {Ki1, HW} or the metrics involve in the time change of Brownian motions \cite {Ki2, Ki3, Ki4, GLQR}). This requires new graph structure to  accommodate the new parameter of weights.

 \medskip

In this regard, we let $\{S_j\}_{j=1}^N$ be a contractive IFS on a complete metric space $(M, \rho)$, and let $K$ be the attractor.  Let ${\bf s}= (s_1,\ldots,s_N)$, $s_j \in (0,1)$, be a vector of weights of the maps $S_j$'s.
We regroup the finite words in the symbolic space to form a new  coding tree $(X({\bf s}), E_v)$ (see \eqref{eq6.2}) such that in each level, the $K_x$'s have comparable weights.
In this case the $AI_\infty$-graph is more natural for use (see Section \ref{sec:6}). While we cannot check the hyperbolicity directly (as $\{K_x\}_{x \in X({\bf s})}$ does not satisfy the separation condition $(S_b)$ as in Theorem \ref {th0.4}), we still obtain some rather satisfactory conclusions (Theorem \ref{th6.3}) for the class of p.c.f.~sets \cite {Ki1}.

\begin{theorem} \label{th0.6}
Let $\{S_j\}_{j =1}^N$ be a contractive IFS that has the p.c.f.~property. Then for any weight ${\bf s} \in (0,1)^N$, the $AI_\infty$-graph $(X({\bf s}), E^{(\infty)})$ is an $(m,1)$-departing expansive graph of bounded degree. Consequently the $AI_\infty$-graph is a hyperbolic augmented tree.
\end{theorem}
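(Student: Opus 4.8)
The plan is to verify the three structural properties in the statement---expansiveness, the $(m,1)$-departing property, and bounded degree---and then invoke Theorem \ref{th0.1} to conclude hyperbolicity. Expansiveness is immediate and already recorded in the text for all $AI_\infty$-graphs: if $x,y$ are at the same level with $K_x\cap K_y=\emptyset$ (equivalently $d_h(x,y)>1$), then the nesting $K_u\subseteq K_x$, $K_v\subseteq K_y$ for $u\in\mathcal J_1(x)$, $v\in\mathcal J_1(y)$ forces $K_u\cap K_v=\emptyset$, i.e.\ $d_h(u,v)>1$. So the real content is the $(m,1)$-departing property and bounded degree, and for both I would exploit the p.c.f.\ structure. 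Fix the finite boundary $V_0$ with its intersection property $K_a\cap K_b\subseteq S_a(V_0)\cap S_b(V_0)$ for incomparable words $a,b$ of equal word-length, together with the nesting $V_0\subseteq V_n:=\bigcup_{\ell(w)=n}S_w(V_0)$ (here $\ell(\cdot)$ denotes word-length, as opposed to the graph level $|\cdot|$); set $\delta_0:=\min\{\rho(\xi,\eta):\xi\ne\eta\in V_0\}>0$, let $r<1$ bound the contraction ratios, and let $0<s_{\min}\le s_{\max}<1$ bound the weights. Recall that each regrouped level collects words whose weight lies in a fixed band $(\lambda^{|x|}s_{\min},\lambda^{|x|}]$, so distinct nodes at one level are incomparable words.

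For the $(m,1)$-departing property I would argue by contradiction. Assume $d_h(x,y)>1$, take $u\in\mathcal J_m(x)$, $v\in\mathcal J_m(y)$, and suppose $d_h(u,v)\le 2$. The length-$1$ case is excluded by nesting as above, so there is a third node $w$ at the level of $u,v$ with $K_u\cap K_w\ne\emptyset$ and $K_w\cap K_v\ne\emptyset$. Let $w_0$ be the ancestor of $w$ at the level of $x$; since $K_{w_0}\supseteq K_w$ meets both $K_x$ and $K_y$ while $K_x\cap K_y=\emptyset$, we have $w_0\ne x,y$, hence $w_0$ is incomparable with each of them. Pick $p\in K_u\cap K_w\subseteq K_x\cap K_{w_0}$ and $q\in K_w\cap K_v\subseteq K_y\cap K_{w_0}$; then $p,q\in K_w$ and $p\ne q$ (as $p\in K_x$, $q\in K_y$). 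Using the p.c.f.\ intersection property at the first splitting letter, and then the nesting $V_0\subseteq V_n$ to promote boundary points of a coarse subcell to boundary points of $K_{w_0}$, I would show $p,q\in S_{w_0}(V_0)$. Writing $w=w_0w'$, this gives two distinct points $S_{w_0}^{-1}(p),S_{w_0}^{-1}(q)\in V_0\cap K_{w'}$, whence $\diam(K_{w'})\ge\delta_0$.

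To close the contradiction I would convert this into a lower bound on the word-length of the tail $w'$. Because $w$ lies $m$ regrouped generations below $w_0$, the weight satisfies $s_{w'}=s_w/s_{w_0}\asymp\lambda^m$; combined with $s_{w'}\ge s_{\min}^{\ell(w')}$ this yields $\ell(w')\ge c\,m$ for large $m$, with $c=\log\lambda/\log s_{\min}>0$ independent of $x,y,w$. Hence $\diam(K_{w'})\le r^{\ell(w')}\diam(K)\le r^{cm}\diam(K)$, which is $<\delta_0$ once $m$ is chosen large. This contradicts $\diam(K_{w'})\ge\delta_0$, so no such $w$ exists and $d_h(u,v)>2$; thus the graph is $(m,1)$-departing. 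I expect this to be the main obstacle, precisely because the grouping is by the abstract weights ${\bf s}$ while ``departing'' is a metric statement about $K$: the two are decoupled, and the bridge is the uniform lower bound $s_{\min}>0$, which turns weight-depth into word-length depth and lets the geometric contraction take over.

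For bounded degree I would treat the vertical and horizontal contributions separately. The number of children of a node is bounded because the tail from one level to the next carries weight in the band $(\lambda s_{\min},\lambda/s_{\min})$, so $s_{\max}<1$ forces its word-length to be at most a fixed constant, leaving only finitely many admissible tails. For the horizontal degree, the p.c.f.\ property gives $K_x\cap K_y\subseteq S_x(V_0)$, so every horizontal neighbour meets $K_x$ in one of the at most $\#V_0$ points of $S_x(V_0)$; since a p.c.f.\ set carries a uniform bound $M_0$ on the number of addresses of any point, each such point lies in at most $M_0$ cells of the given level, and the horizontal degree is at most $\#V_0\cdot M_0$. Combining the two bounds gives bounded degree. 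Finally, expansiveness together with the $(m,1)$-departing property places us in case (iii) of Theorem \ref{th0.1}, so $(X({\bf s}),E^{(\infty)})$ is a hyperbolic augmented tree, completing the proof.
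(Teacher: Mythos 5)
Your proposal is correct and takes essentially the same route as the paper's proof of Theorem \ref{th6.3}: expansiveness is immediate; the $(m,1)$-departing property comes from the p.c.f.~identity $K_x\cap K_y=S_x(V_0)\cap S_y(V_0)$, which forces two distinct points of $V_0$ into a set of diameter $O((r^*)^m)$ and contradicts the minimal gap of $V_0$ once $m$ is large; and bounded degree splits into the vertical count (tail words have weight $\geq s_*^2$, hence bounded word-length) and the horizontal count ($\#V_0$ times the maximal number of symbolic addresses). The only differences are cosmetic: you pivot on the ancestor $w_0$ of the middle vertex and bound $\diam(K_{w'})$ by $(r^*)^{\ell(w')}\diam(K)$, whereas the paper pivots on $z^{(-m)}$ and bounds the distance between the two points $S_w(\eta_1),S_w(\eta_2)\in V_0$ by $(r^*)^m\,\diam(V_0)$ --- the same contradiction.
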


\medskip

The theorem is applied to study self-similar energy forms and resistance metrics in Section \ref{sec:6}. By using the fact that the natural identification  $\kappa: (\partial X, \theta_a) \to (K, \rho)$ is a homeomorphism,  we can impose a new metric $\tilde \theta_a$ on $K$, 
as a consequence of Theorem \ref{th0.6}.  We  show that {\it if $K$ admits a regular harmonic structure, then $\tilde\theta_a$ is H\"older equivalent to the associated {\it resistance metric} on $K$} (Theorem \ref {th6.7}).

\bigskip

We remark that in \cite{Ki5}, Kigami proposed another construction of metrics through the weighted trees associated with successive partitions on compact metrizable spaces, while he stated that such construction is possible if and only if the ``resolution" graph (which is similar to our $AI_\infty$-graph here) is hyperbolic. He also studied different types of properties among metrics and measures (e.g., Lipschitz equivalence, Ahlfors-regularity, volume doubling, etc.) via the weight functions on trees.

\bigskip

 In a forthcoming paper \cite {KLWa}, by showing that a hyperbolic graph is near-isometric to an expansive hyperbolic graph, we present in greater generality the framework of index maps and augmented index graphs. We can also extend the scope of underlying spaces to quasi-metric spaces  (particularly the  spaces of homogeneous type \cite {Ch, CW}), and study random walks on such hyperbolic graphs.

\bigskip

For the organization of the paper, we introduce the basics of expansive graphs and $(m,k)$-departing property in Section \ref{sec:2}, and prove Theorem \ref{th0.1}.
We study the boundaries of hyperbolic expansive graphs in Section \ref{sec:3}, and prove Theorem \ref{th0.2}.
In Section \ref{sec:4}, we define the index maps, as well as the associated $AI_b$-graphs, $AI_\infty$-graphs, and prove the hyperbolicity of $AI_b$-graphs in Theorem \ref{th0.3}.
For the $AI_\infty$-graphs in Theorems \ref{th0.4} and \ref{th0.5}, we need  some separation properties of the index family, which are detailed in Section \ref{sec:5}.
In Section \ref{sec:6}, we give the two examples as asserted above, and also apply the techniques developed to consider the weighted IFS. An appendix on the augmented tree defined by IFS of similitudes and some related results are included for the convenience of the reader.

\bigskip

\section{Expansive graphs and hyperbolicity}
\label{sec:2}

A {\it graph} $(X,E)$ is a countable set $X$ of vertices together with a set $E$ of edges which is a symmetric subset of $X \times X \setminus \Delta$ ($\Delta:=\{(x,x):x \in X\}$).
It is called {\it locally finite} if for any vertex $x \in X$, $\deg(x):=\# \{y: (x,y) \in E\} < \infty$.
For $x,y \in X$,  we use   $\pi(x, y)$ to denote the {\it geodesic} (path with smallest path length) from $x$ to $y$, and define  the {\it graph distance} $d(x,y)$ by the length of  $\pi(x, y)$  if such path exists  ($d(x,y) = \infty$ otherwise). If $d(x,y)$ is finite for all $x,y \in X$, we say that $(X,E)$ is {\it connected}; in this case $d$ is an integer-valued metric on $X$.

\medskip

In this paper, we assume that $(X,E)$ is a {\it rooted graph}, i.e., a locally finite connected graph in which a vertex $\vartheta \in X$ is fixed as a {\it root}.  We write $|x|:=d(\vartheta,x)$ for $x \in X$, and let $X_n := \{x \in X: |x|=n\}$. Then $X = \bigcup_{n=0}^\infty X_n$. We define a partial order $\prec$ on $X$ with $y \prec x$ if and only if $x$ lies on some $\pi(\vartheta,y)$. For an integer $m \geq 0$ and $x \in X$, let
$$
{\mathcal J}_m(x):= \{y \in X: y \prec x ,\  |y|=|x|+m\}, \quad \mathcal J_{-m}(x):=\{z \in X: x \in \mathcal J_m(z)\}
 $$
be the $m$-th {\it descendant set} and  the $m$-th {\it precedessor set} of $x$  respectively; in general, ${\mathcal J}_m(x)$ is allowed to be empty.
We also write $\mathcal J_*(x):=\{y \in X: y \prec x\}$  and $\mathcal J_{-*}(x):=\{z \in X: x \prec z\}  $ for further use.

\medskip

Let $E_v=\{(x,y) \in E: |x|-|y|=\pm 1\}$ and $E_h=\{(x,y) \in E: |x|=|y|\}$ denote the {\it vertical edge} set and the {\it horizontal edge} set respectively.
Clearly $E=E_v \cup E_h$, and $E_v=\{(x,y) \in X \times X: x \in \mathcal J_1(y) \hbox{ or } y \in \mathcal J_1(x)\}$. We say that a rooted graph $(X,E)$ is {\it vertical} if $E=E_v$. A {\it (rooted) tree} is a vertical rooted graph satisfying $\# \mathcal J_{-1}(x)=1$ for all $x \in X \setminus \{\vartheta\}$.

\medskip

We refer to the {\it horizontal distance} $d_h(\cdot,\cdot)$  as the graph distance on $(X ,E_h)$. We write $x \sim_h y$ for each pair $(x,y) \in E_h \cup \Delta$.
It is clear that $d_h(x, y ) = \infty$ for $|x| \not = |y|$, and $d(x, y) \leq d_h(x, y)$.  In the case that $d(x, y) = d_h(x, y)$, there is a geodesic $\pi(x,y)$ that lies in $(X, E_h)$, called a {\it horizontal geodesic} of $(X, E)$.
\medskip

\begin{definition} \label{de2.1}
We call  $(X, E)$ an {\rm expansive graph} if it is a rooted graph that satisfies for $x,y \in X$,
\begin{equation} \label{eq2.1}
d_h(x,y) >1 \ \Rightarrow \   d_h(u, v)>1,  \quad \forall \   u\in {\mathcal J}_1(x),\  v\in {\mathcal J}_1(y),
\end{equation}
or equivalently if  each $u\sim_h v$ implies  $ x\sim_h y$ whenever $x \in \mathcal J_{-1}(u)$ and $y \in \mathcal J_{-1}(v)$.
\end{definition}

\medskip
It follows that in such a graph $(X,E)$, if $x,y$ are  predecessors of $u$ (i.e., $x, y \in {\mathcal J}_{-*}(u)$) with $|x|=|y|$, then $x \sim_h y$. It is easy to see that the above condition is also equivalent to
\begin {equation}\label {eq2.2}
\max \{d_h(u, v) , 1\} \geq d_h(x, y), \quad  u\in {\mathcal J}_1(x), \ v\in {\mathcal J}_1(y).
\end{equation}
Intuitively, in an expansive rooted graph the children are drifted farther apart than their non-neighboring parents (see Figure \ref{fig:1}).

\medskip
\begin{figure}[ht]
\begin{center}
\includegraphics[scale=0.5]{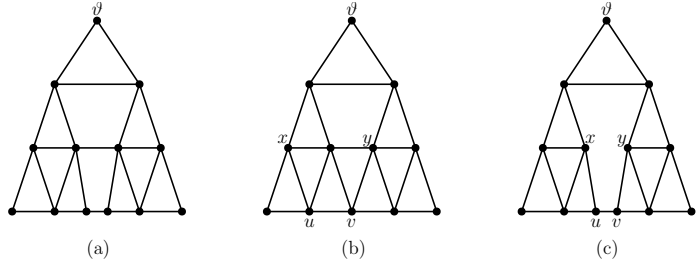}
\caption{(a) is an expansive graph, while (b) and (c) are not.} \label{fig:1}
\end{center}
\end{figure}

Clearly every rooted tree is expansive. Note that an expansive rooted graph $(X,E)$ is called a {\it pre-augmented (rooted) tree} in \cite{LW3,KLW1} if the  vertical part $(X,E_v)$ is a tree.
There are important examples in which the vertical parts of expansive graphs are not trees;  in the following  we display one of those, which arises from the well-known Bernoulli convolutions.

\begin{example} \label{ex2.2} {\rm
 Let $Y_0 = \{ \vartheta\}$,  $Y_n=\{ x = \varepsilon_1 \cdots \varepsilon_n: \varepsilon_i = 0 \hbox { or }  1\}$ for $n \geq 1$, and $Y = \bigcup_{n=0}^\infty Y_n$. Clearly there is a natural tree structure $E_v^*$ on $Y$: $x = \varepsilon_1 \cdots \varepsilon_n \in \mathcal J_1(y)$ if and only if $y=\varepsilon_1 \cdots \varepsilon_{n-1}$. For $0< \rho <1$,  let  $I_n = \{ \xi(x) = \sum_{i=1}^n \rho^n \varepsilon_i : \ x = \varepsilon_1 \cdots \varepsilon_n \in Y_n\}$.
If $0< \rho \leq 1/2$, then for each $n \geq 1$, $\xi: Y_n \to I_n$ is bijective. However for $1/2 <  \rho <1$, they may not be bijective, for example, if $\rho$ equals the golden ratio $\frac{\sqrt{5}-1}{2}$ (the positive solution of $x^2+x-1 =0$), then we have $\xi(011)=\xi (100)$.

\medskip
We define an equivalence relation $\simeq_n$ on $Y_n$ by $x\simeq_n y$ if and only if $\xi(x) = \xi(y)$. Let $X_n$ be the quotient of $Y_n$ with respect to $\simeq_n$, and use $E_v$ to denote the edge set on $X = \bigcup_{n=0}^\infty X_n$ induced by $(Y,E_v^*)$. Then $(X, E_v)$ is a vertical graph, but not a tree unless all relations $\simeq_n$ are trivial.  We can augment $(X, E_v)$ by adding a set of horizontal edges:
$$
(x, y) \in E_h \  \Leftrightarrow \ x, y \in X_n, \hbox { and  }  |\xi(x)-\xi(y) | \leq  a \rho^n \hbox{ for some } n \geq 0,
$$
where $a = \sum_{n=1}^\infty \rho^n = \frac \rho{1-\rho}$. Let $E= E_v \cup E_h$. Then $(X, E)$ is an expansive graph, because for $x, y \in X_n$ with $|\xi(x)-\xi(y)| > a \rho^n$,
\begin{flalign*}
&  & |\xi(u) - \xi(v)| > a\rho^n -\rho^{n+1} = a \rho^{n+1}, \quad  u \in {\mathcal J}_1(x), \ v \in {\mathcal J}_1(y).  \qquad \qquad \quad \ \square
\end{flalign*}
}
\end{example}
For more discussion of this example in connection with the iterations and augmented trees, the reader can refer to Appendix and  \cite {Wa}.

\bigskip

A geodesic path  $[x_i]_i$ in a rooted graph $(X,E)$ is said to be {\it convex} if $|x_i| \leq \frac 12(|x_{i-1}|+|x_{i+1}|)$ for all $i$. It is easy to see that for each convex geodesic $[x_i]_i$, there exist $u=x_k, v=x_\ell$ ($k \leq \ell$ and they can be equal) such that
\begin{equation*}
\begin {cases} x_{i-1} \in \mathcal J_1(x_i), \quad  & i \leq k;\\
 (x_{i-1}, x_i) \in E_h, \quad  & k < i \leq \ell ;\\
 x_{i-1} \in \mathcal J_{-1}(x_i), \quad & i>\ell.
\end{cases}
\end{equation*}
We denote  a convex geodesic segment from $x$ to $y$ by $\pi(x,u,v,y)$;
such geodesic may not be unique (see Figure \ref{fig:2}).

\begin{figure}[ht]
\begin{center}
\includegraphics[scale=0.5]{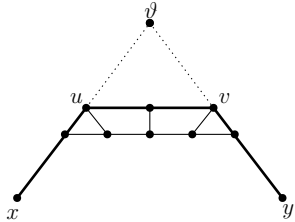}
\caption{Two convex geodesics between $x$ and $y$.} \label{fig:2}
\end{center}
\end{figure}

\medskip
The following simple result is an important property of the expansive graphs.

\begin{proposition} \label{th2.3}
Let $(X, E)$ be an expansive graph. Then any two vertices $x,y \in X$ can be joined by a convex geodesic.
\end{proposition}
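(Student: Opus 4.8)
The plan is to recast convexity as a monotonicity statement about the \emph{level sequence} of a geodesic and then produce a convex geodesic by a height-minimizing descent argument driven by the expansive hypothesis. Along any geodesic $[x_i]_i$ the consecutive differences $d_i := |x_i| - |x_{i-1}|$ lie in $\{-1,0,1\}$, since each edge is vertical or horizontal; and the convexity condition $|x_i| \le \tfrac12(|x_{i-1}|+|x_{i+1}|)$ holds for all $i$ exactly when the sequence $(d_i)_i$ is non-decreasing. This in turn is precisely the statement that the path first descends, then runs horizontally, then ascends, i.e.\ that it has the $\pi(x,u,v,y)$ shape. So it suffices to start from an arbitrary geodesic joining $x$ and $y$ and push it into this shape without changing its length.

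Concretely, I would fix $n := d(x,y)$ and, among the geodesics from $x$ to $y$ (of which there are only finitely many, by local finiteness of $(X,E)$), choose one, $[x_0,\dots,x_n]$, of minimal total height $\sum_{i=0}^{n} |x_i|$, and claim it is convex. If it were not, there would be an index $i$ with $d_i > d_{i+1}$, so that $(d_i,d_{i+1})$ is one of $(1,-1)$, $(1,0)$, $(0,-1)$. The key mechanism is the equivalent form of expansiveness in Definition~\ref{de2.1}: whenever $u \sim_h v$, any predecessors $x \in \mathcal J_{-1}(u)$ and $y \in \mathcal J_{-1}(v)$ satisfy $x \sim_h y$ (and, as noted just after the definition, two level-equal predecessors of a common vertex are horizontally adjacent). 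This is what allows me, in each offending case, to replace the middle vertex $x_i$ by a strictly lower one $x_i'$.

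I would then dispatch the three cases. In the peak case $(1,-1)$, $x_{i-1}$ and $x_{i+1}$ are two level-equal parents of $x_i$, hence $x_{i-1}\sim_h x_{i+1}$; as a geodesic cannot backtrack this is a genuine horizontal edge, so $x_{i-1}\to x_{i+1}$ shortcuts the path to length $n-1$, contradicting $n=d(x,y)$, and peaks therefore never occur. In case $(1,0)$ we have $x_{i-1}\in\mathcal J_{-1}(x_i)$ and $x_i\sim_h x_{i+1}$; choosing any parent $y\in\mathcal J_{-1}(x_{i+1})$ (which exists since $|x_{i+1}|=|x_i|\ge 1$), expansiveness applied to $x_i\sim_h x_{i+1}$ gives $x_{i-1}\sim_h y$, so setting $x_i':=y$ yields a same-length walk with $|x_i'|=|x_i|-1$; the symmetric choice of $y\in\mathcal J_{-1}(x_{i-1})$ handles $(0,-1)$. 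In both cases the total height strictly drops, contradicting minimality, so the minimizer must be convex.

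The main point requiring care is the validity of each replacement: I must verify that $x_i'$ yields genuine edges and a legitimate length-$n$ walk from $x$ to $y$, which is then automatically a geodesic (a walk of length $d(x,y)$ between its endpoints is simple and shortest, hence geodesic). The only degenerate possibilities are $x_i'=x_{i-1}$ or $x_i'=x_{i+1}$, but each of these would again produce a strict shortcut of length $n-1$, contradicting that we started from a geodesic; and the parents needed in the replacement exist because the relevant vertices sit at level $\ge 1$. Beyond this bookkeeping the argument is a direct application of the expansive property, which is exactly the hypothesis ensuring that pushing the path toward the root preserves horizontal adjacency.
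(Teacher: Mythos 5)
Your proof is correct and takes essentially the same route as the paper's: the same case analysis of the three convexity violations in the level sequence, with the peak pattern $(1,-1)$ excluded outright via the shortcut contradiction, and the patterns $(1,0)$, $(0,-1)$ repaired by replacing the middle vertex with a predecessor using the expansive property. The only difference is in bookkeeping: the paper iterates the replacement and asserts termination (``repeating this process\ldots eventually''), whereas you obtain termination by choosing a geodesic of minimal total height, which is a slightly more explicit way of making the same argument rigorous.
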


\begin{proof}
Let $\pi$ be a geodesic path from $x$ to $y$. Then it is clear that $\pi$ does not contain any segment $[u,v,w]$ such that $|u|=|w|=|v| - 1$; indeed in this case, we have $d_h(u,w) \leq 1$ by the expansive property, which leads to a contradiction. Next if $\pi$ contains a segment $[u,v,w]$ such that $|v|=|w| = |u| + 1$ (or $|u| = |v| = |w| + 1$), then by the expansive property, we can replace this segment by $[u,w^-, w]$ (or $[u, u^-, w]$ respectively), where $w^- \in \mathcal J_{-1}(w)$ (see \cite[Proposition 3.4]{Ka}). Such a replacement does not change the end points or increase the length of the segment. Repeating this process, we get a convex geodesic from $x$ to $y$ eventually.
\end{proof}

\medskip

\begin{definition} \label{de2.4} \hspace{-2mm} {\rm\cite{Gr}}
On a rooted graph $(X,E)$, we define
the {\rm  Gromov product of $x, y \in X$} to be
$$
(x|y) := \frac 12 \big(|x|+ |y| - d(x,y)\big).
$$
 $(X,E)$ is said to be {\rm (Gromov) hyperbolic}  if there is $\delta \geq 0$ such that
\begin{equation*}
(x|y) \geq \min \{(x|z),(z|y)\}-\delta, \qquad \forall\ x,y,z \in X.
\end{equation*}
In this case for $a>0$ with $e^{\delta a}<\sqrt{2}$, we can define a metric $\theta_a(\cdot,\cdot)$ {\rm (Gromov metric)} on $X$ by
\begin{equation} \label{eq2.3}
\theta_a(x,y) = \inf \{{\sum}_{i=1}^n e^{-a(x_{i-1}|x_i)}: n \geq 1, \ x=x_0,x_1,\cdots,x_n = y \in X\}
\end{equation}
for distinct $x,y \in X$, and $\theta_a(x,x) = 0$ for $x \in X$.
Let $\widehat{X}$ denote the $\theta_a$-completion of $X$, and call $\partial X := \widehat{X} \setminus X$ the {\it hyperbolic boundary} of $(X,E)$.
\end{definition}

\medskip

The reader can refer to \cite{CDP, Gr, GH, Wo} for more details of the hyperbolic graphs and the hyperbolic boundaries.
We observe that if $(X, E)$ is expansive, then for $x, y \in X$ and a convex geodesic $\pi(x, u, v, y)$, we have
\begin{equation} \label{eq2.4}
(x|y) = \frac 12 \big (|x|+|y| -d(x, y) \big)= \frac 12 \big (|u| + |v| -d_h (u, v) \big ) = (u|v).
\end{equation}

Next we introduce another important notion to describe the  departing behavior of the descendant vertices in conjunction with the expansive property. Together they provide a useful criterion for the hyperbolicity of the expansive graphs.

\medskip

\begin{definition} \label{de2.5}
Let $m,k$ be two positive integers. A rooted graph $(X,E)$ is said to be  {\rm $(m,k)$-departing}  if for $x,y \in X$,
\begin{equation} \label{eq2.5}
d_h(x,y) >k \ \Rightarrow \ d_h(u,v) > 2k, \quad \forall \ u\in {\mathcal J}_m(x), \ v\in {\mathcal J}_m(y).
\end{equation}
\end{definition}

\medskip

It follows from the definitions that  every $(1,1)$-departing graph is expansive; every rooted tree is $(m,k)$-departing for any $m,k$, and every finite graph is $(m,k)$-departing for sufficiently large $m, k$. However, an infinite expansive  graph may not be $(m,k)$-departing for any $m,k$ (see Figure \ref{fig:3} for a simple example; in that graph, if both $x,y$ are on the left or right side, then  $d_h(x,y)= d_h(u, v)$ for $u \in {\mathcal J}_m(x)$ and $v \in {\mathcal J}_m(y)$, $m \geq 1$).

\medskip

\begin{figure}[ht]
\begin{center}
\includegraphics[scale=0.45]{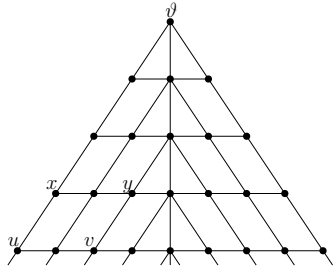}
\caption{An expansive graph that is not $(m,k)$-departing for any $m,k$.} \label{fig:3}
\end{center}
\end{figure}

\medskip
\begin{lemma} \label{th2.6}
Suppose $(X, E)$ is $(m,k)$-departing. Then it is $(m,\ell k)$-departing for any positive integer $\ell$.
\end{lemma}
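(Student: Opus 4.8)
The plan is to prove the statement directly for a general $\ell$, rather than by induction on $\ell$; the engine is the \emph{contrapositive} of the $(m,k)$-departing property, which reads: if there exist $u\in\mathcal J_m(x)$ and $v\in\mathcal J_m(y)$ with $d_h(u,v)\le 2k$, then $d_h(x,y)\le k$. In words, a pair of descendants that stay within $2k$ of each other forces any chosen pair of their predecessors to stay within $k$. This is exactly the tool that transports distance estimates \emph{upward}, from the descendant level to the parent level, one short block at a time.

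Fix $x,y$ with $d_h(x,y)>\ell k$ and arbitrary $u\in\mathcal J_m(x)$, $v\in\mathcal J_m(y)$. I may assume $|x|=|y|$ and $d_h(u,v)=:s<\infty$, since otherwise the relevant horizontal distances are infinite and the conclusion $d_h(u,v)>2\ell k$ is immediate. First I would take a horizontal geodesic $u=w_0,w_1,\dots,w_s=v$ at level $|x|+m$ and subdivide it into $t:=\lceil s/(2k)\rceil$ consecutive blocks, each of horizontal length at most $2k$, with breakpoints $w_{i_0},\dots,w_{i_t}$ where $i_0=0$ and $i_t=s$. For each breakpoint I choose a predecessor $p_j\in\mathcal J_{-m}(w_{i_j})$ at level $|x|$, pinning the endpoints to $p_0=x$ and $p_t=y$ (legitimate precisely because $u\in\mathcal J_m(x)$ and $v\in\mathcal J_m(y)$).

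The key existence point is that $\mathcal J_{-m}(w_{i_j})\neq\emptyset$: since $|w_{i_j}|=d(\vartheta,w_{i_j})=|x|+m$ and every edge changes the level by at most one, a geodesic from $\vartheta$ to $w_{i_j}$ must raise the level by exactly one at each step, hence passes through a vertex at level $|x|$ that is a valid $m$-th predecessor. Now applying the contrapositive to each block --- from $d_h(w_{i_{j-1}},w_{i_j})=i_j-i_{j-1}\le 2k$ one gets $d_h(p_{j-1},p_j)\le k$ --- and summing along the chain via the triangle inequality gives $d_h(x,y)\le\sum_{j=1}^{t} d_h(p_{j-1},p_j)\le tk=k\lceil s/(2k)\rceil$.

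To finish I combine this with the hypothesis: $k\lceil s/(2k)\rceil\ge d_h(x,y)>\ell k$ forces $\lceil s/(2k)\rceil\ge\ell+1$, hence $s/(2k)>\ell$, i.e.\ $d_h(u,v)=s>2\ell k$, which is exactly the $(m,\ell k)$-departing conclusion. The main thing to get right is the constant bookkeeping: blocks of horizontal length $2k$ are precisely what convert the parent-level gap ``$>\ell k$'' into the descendant-level gap ``$>2\ell k$'', and one must ensure the endpoints of the lifted chain are genuinely $x$ and $y$ (not merely some predecessors) so that the summed estimate controls $d_h(x,y)$ itself. An induction on $\ell$ is also conceivable but is clumsier, since splitting a parent-level geodesic and then descending interacts badly with the non-geodesic horizontal structure at the descendant level; the direct subdivision above sidesteps this difficulty entirely.
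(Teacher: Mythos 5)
Your proof is correct and is essentially the paper's own argument: the paper likewise passes to the contrapositive, subdivides the horizontal path from $u$ to $v$ at level $|x|+m$ into blocks of length at most $2k$, lifts the breakpoints to $m$-th predecessors with the endpoints pinned to $x$ and $y$, applies the $(m,k)$-departing property blockwise, and sums via the triangle inequality. The only cosmetic difference is that the paper starts from $d_h(u,v)\le 2\ell k$ and concludes $d_h(x,y)\le \ell k$, whereas you run the same estimate starting from $d_h(x,y)>\ell k$; your extra remarks (nonemptiness of $\mathcal J_{-m}$ and the ceiling bookkeeping) just make explicit what the paper leaves implicit.
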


\begin{proof}
 Let $x,y \in X$, $u \in \mathcal J_m(x)$ and $v \in \mathcal J_m(y)$ that satisfy $d_h(u, v) \leq 2 \ell k$. Then there exists a set of vertices $\{z_0, z_1, \cdots, z_s\} \subset X_{|x|+m}$ such that $z_0=u$, $z_s=v$, $s \leq \ell$ and $d_h(z_{i-1}, z_i) \leq 2k$ for $i=1,2,\cdots, s$.
By the $(m,k)$-departing property, we have
\[
d_h(x,y) \leq \sum_{i=1}^s d_h(z_{i-1}^{(-m)}, z_i^{(-m)}) \leq s k \leq \ell k,
\]
where $z_i^{(-m)} \in \mathcal J_{-m}(z_i)$ and $z_0^{(-m)} = x$, $z_s^{(-m)} = y$. This completes the proof.
\end{proof}

\medskip
It follows that $(m,1)$-departing implies $(m,k)$-departing for any $k$. Also, it is straightforward to check inductively that $(1,k)$-departing implies $(m, k)$-departing for any $m$. Hence we have

\medskip

\begin {corollary} \label {th2.7}
Suppose $(X, E)$ is $(1,1)$-departing. Then it is $(m,k)$-departing for any positive integers $m,k$.
\end{corollary}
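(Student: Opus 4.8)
The plan is to deduce the corollary by pairing Lemma~\ref{th2.6} with a one-line induction on the number of generations. Since $(X,E)$ is $(1,1)$-departing, this is the $(m,k)$-departing condition in the special case $m=k=1$, so Lemma~\ref{th2.6} (applied with $m=1$, $\ell=k$) immediately upgrades it to $(1,k)$-departing for every positive integer $k$. Hence it suffices to establish the implication ``$(1,k)$-departing $\Rightarrow$ $(m,k)$-departing for every $m$,'' after which the corollary follows by reading off arbitrary $m$ and $k$.

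For that implication I would induct on $m$, the case $m=1$ being the hypothesis itself. Assume the graph is $(m,k)$-departing and take $x,y\in X$ with $d_h(x,y)>k$, together with $u\in\mathcal J_{m+1}(x)$ and $v\in\mathcal J_{m+1}(y)$. Choose $u'\in\mathcal J_m(x)$ and $v'\in\mathcal J_m(y)$ with $u\in\mathcal J_1(u')$ and $v\in\mathcal J_1(v')$; such vertices exist because a geodesic from $\vartheta$ to $u$ passes through $x$, and its vertex at level $|x|+m$ is the required common predecessor of $u$ lying below $x$ (and symmetrically for $v$). The inductive hypothesis applied to $x,y$ gives $d_h(u',v')>2k$, and since $k\ge1$ we have $2k>k$, so in particular $d_h(u',v')>k$. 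Applying the base case $(1,k)$-departing to the pair $u',v'$ and their children $u,v$ then yields $d_h(u,v)>2k$, which establishes $(m+1,k)$-departing and closes the induction.

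Combining the two steps, from $(1,1)$-departing we first obtain $(1,k)$-departing for all $k$ via Lemma~\ref{th2.6}, and then $(m,k)$-departing for all $m$ via the induction above; this is exactly the assertion for arbitrary positive integers $m$ and $k$.

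The argument is elementary, so I anticipate no genuine obstacle; the only point demanding care is the chaining of the departing inequalities. The departing property converts a horizontal gap $>k$ into a gap $>2k$, and to iterate it one must first weaken the stronger conclusion $>2k$ back down to $>k$ before re-applying the hypothesis at the next generation—this is legitimate precisely because $k\ge1$ forces $2k>k$. The remaining bookkeeping is to track the generation of each intermediate vertex, ensuring that the descendant relations $u\in\mathcal J_1(u')$ and $u'\in\mathcal J_m(x)$ compose to $u\in\mathcal J_{m+1}(x)$, which is immediate from the definition of the descendant sets.
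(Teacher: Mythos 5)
Your proof is correct and takes essentially the same route as the paper: Lemma~\ref{th2.6} (with $m=1$, $\ell=k$) upgrades $(1,1)$-departing to $(1,k)$-departing, and then induction on $m$---which the paper dismisses as ``straightforward to check inductively''---gives $(m,k)$-departing for all $m$. The induction step you spell out, choosing intermediate vertices $u' \in \mathcal J_m(x)$, $v' \in \mathcal J_m(y)$ on the geodesics and weakening $d_h(u',v')>2k$ to $d_h(u',v')>k$ before reapplying the $(1,k)$-departing property, is precisely the argument the paper intends.
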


\medskip

We give two examples to illustrate the $(m,k)$-departing property.

\begin{example} \label{ex2.8} {\bf (SG graph)} {\rm
Let $X =\bigcup_{n=0}^\infty \Sigma^n$  with $\Sigma = \{0,1,\cdots,d\}$ ($d \geq 1$) be the symbolic space representing the $d$-dimensional Sierpinski gasket $K$, and let $K_x$ be the cell associated to the word $x \in X$. Let $E_v$ be the natural tree structure on $X$, and define $E_h = \{(x, y)\in X\times X: |x| = |y|, \ x \neq y, \hbox{ and } K_x \cap K_y \not = \emptyset\}$ \cite {Ka}. Consider the rooted graph $(X, E)$ with $E= E_v \cup E_h$. It is easy to show  that  $(X, E)$ is expansive and $(1,1)$-departing.

\vspace {0.1cm}

Indeed, it is easy to check that  $u \sim_h v$ if and only if either (i) $u = wi, v =wj$ for some $w \in X, i, j \in  \Sigma$, or (ii) $u =wij^k, v= wji^k$ for some $w\in X, i, j  \in \Sigma, i\not =j, k \geq 1$ \cite{DS1, Ki1}.
Note that (i) implies $u^- = v^-$, and (ii) implies  $d_h (u^-, v^-) =1$. Therefore $u \sim_h v$ implies $u^- \sim_h v^-$, i.e., $(X,E)$ is expansive. Moreover, for any horizontal segment $[x,z,y]$, we can conclude that one of $x^-, y^-$ must equal $z^-$, hence $d_h(x^-, y^- ) \leq 1$. This shows that $(X,E)$ is $(1,1)$-departing. \hfill $\square$}
\end{example}

\begin{example} \label{ex2.9} {\bf (Hata tree)} {\rm The Hata tree $K$ is defined on ${\mathbb C}$ by the iterated function system $S_1(z) = c\bar z$ and $S_2(z) = (1-|c|^2) \bar z + |c|^2$ where $|c|, |1-c| \in (0,1)$ (see the left figure of Figure \ref{fig:ht}, the detailed description of $K$ can be found in \cite [p.16]{Ki1}). The graph $(X, E)$ is the symbolic space $X$ with $E= E_v\cup E_h$, where $E_h = \{(x, y)\in X\times X: |x| = |y|, \ x \neq y, \hbox{ and }  K_x \cap K_y \not = \emptyset\}$ (the right picture). It is clear that $(X, E)$ is expansive.

\vspace{0.1cm}

From the graph, we see that  $d_h(11,  22)= d_h (112,  221)=2$ (the 212-branch sticks out and is thus  not counted). Hence $(X, E)$ is not $(1, 1)$-departing.
However, it is not hard to see that $(X,E)$ is $(m, 1)$-departing for $m \geq 2$ (check level 2, then use the similarity of the graph to conclude for all levels), since the further separation of descendants overcomes the missing count of those outlying branches. \hfill $\square$}
\end{example}

\begin{figure}[htbp]
\begin{center}
    \begin{minipage}[t]{0.60\linewidth}
\includegraphics[height=3.0cm]{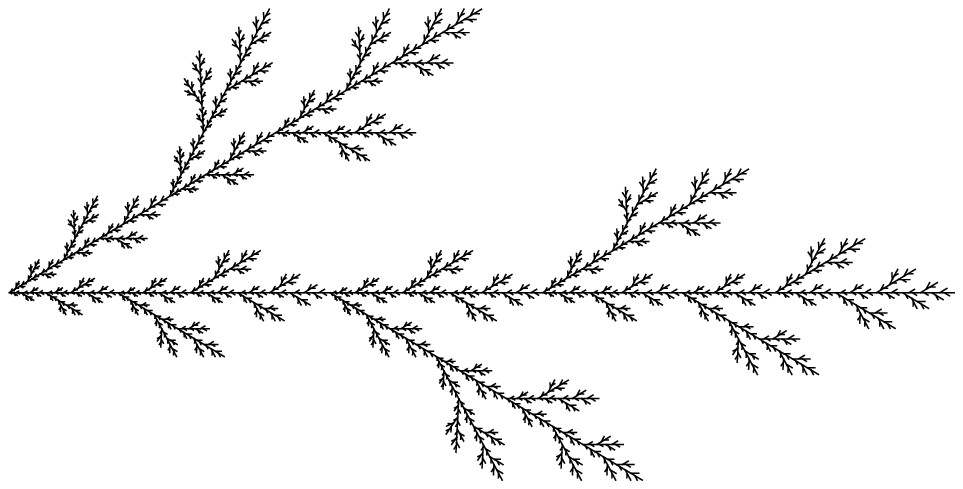}
    \end{minipage}
 \hspace{-3.5cm}
    \begin{minipage}[t]{0.30\linewidth}
           {\includegraphics[height=3.8cm]{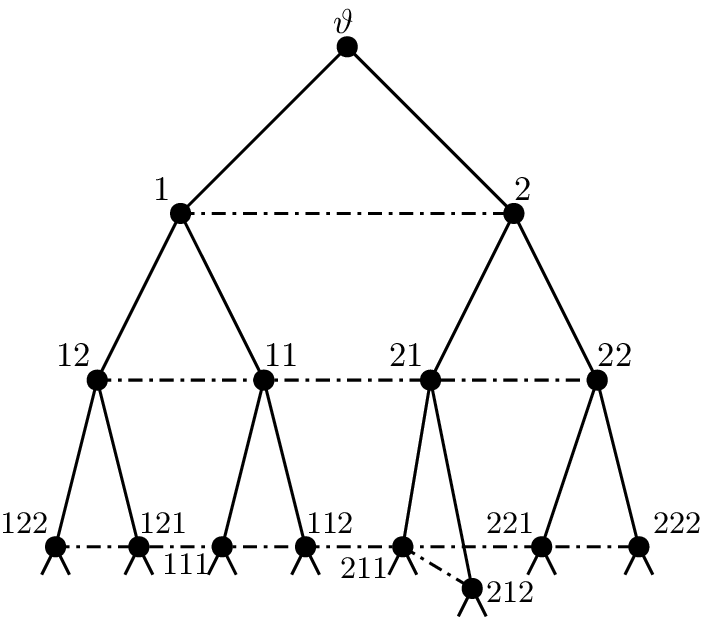}}
    \end{minipage}
\caption{Hata tree ($c=0.5+0.3i$) and the graph $(X,E)$.} \label{fig:ht}
\end{center}
\end{figure}

\begin{proposition} \label{th2.10}
Suppose $(X,E)$ is $(m,k)$-departing,  then the lengths of all horizontal geodesics in $(X,E)$ are bounded by $L=L(m,k):=\lceil\frac{2m+1}{k}\rceil k+2m$. In particular, for $(m,1)$-departing rooted graphs,  $L = 4m+1$.
\end{proposition}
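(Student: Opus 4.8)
The plan is to argue by a shortcut construction: if a horizontal geodesic were too long, then passing up $m$ levels to suitable predecessors and coming back down would produce a strictly shorter path between two of its vertices, contradicting minimality. Write $q := \lceil (2m+1)/k\rceil$, so that $qk$ is the least multiple of $k$ exceeding $2m$ and $L = qk + 2m$; note that $2m < qk$, hence $L < 2qk$. By Lemma \ref{th2.6}, $(X,E)$ is also $(m, qk)$-departing, and this is the version of the departing property I would feed into the estimate.

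Let $[x_0, x_1, \dots, x_n]$ be a horizontal geodesic, necessarily lying in a single level $N$. The first observation I would record is that every subpath of a geodesic is again a (horizontal) geodesic, so $d(x_0, x_j) = d_h(x_0, x_j) = j$ for each $0 \le j \le n$. The crucial point — and the reason the endpoints alone do not suffice — is to feed the \emph{intermediate} vertices $x_j$ into the departing property. Assuming $N \ge m$, choose predecessors $a_0 \in \mathcal J_{-m}(x_0)$ and $a_j \in \mathcal J_{-m}(x_j)$; then $x_0 \in \mathcal J_m(a_0)$ and $x_j \in \mathcal J_m(a_j)$, and the up--across--down path through $a_0, a_j$ gives the shortcut bound $j = d(x_0,x_j) \le 2m + d_h(a_0, a_j)$.

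Next I would use $(m, qk)$-departing to bound $d_h(a_0, a_j)$ from above. For every $j \le 2qk$ we have $d_h(x_0, x_j) = j \le 2qk$, so the contrapositive of the departing property forces $d_h(a_0, a_j) \le qk$. Combining with the shortcut bound yields $j \le 2m + qk = L$ for all $j$ with $j \le \min(n, 2qk)$. Since $L < 2qk$, the geodesic cannot reach length $2qk$: if $n \ge 2qk$, then taking $j = 2qk$ would give $2qk \le L < 2qk$, which is absurd. Hence $n < 2qk$, and applying the displayed inequality once more with $j = n$ gives the desired bound $n \le L$. The special case $k = 1$ yields $q = 2m+1$ and $L = 4m+1$.

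The delicate part is not the estimate itself but pinning down the \emph{exact} constant: one must apply the departing property to an interior vertex $x_j$ at the correct scale (threshold $qk$, reached already at $j \le 2qk$) rather than to the endpoints, since the endpoint inequality $n \le 2m + \lceil n/(2k)\rceil k$ alone permits values strictly larger than $L$. A secondary point to dispose of is the existence of the predecessors $a_0, a_j$, which requires $N \ge m$; the finitely many low levels $N < m$ must be handled separately, either by lifting to predecessors as high as the root permits or by observing that such levels support only short horizontal geodesics.
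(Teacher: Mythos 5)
Your proof is correct and takes essentially the same approach as the paper: both arguments combine Lemma \ref{th2.6} (upgrading to the $(m,\lceil\frac{2m+1}{k}\rceil k)$-departing property) with the up--across--down shortcut through $m$-th predecessors, and both dispose of the low levels via the path through the root. The paper merely organizes this as a contradiction, truncating an offending horizontal geodesic to one of length exactly $L+1$ and applying the departing implication forward at its endpoints---which is your intermediate-vertex/contrapositive estimate in disguise.
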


\begin{proof}
Suppose otherwise, then there exists a horizontal geodesic $\pi(x,y)$ with length $L+1$. Note that $\pi(x,\vartheta) \cup \pi(\vartheta, y)$ is a path joining $x$ and $y$. Comparing the lengths of two paths, we have $2 |x| \geq L+1 > 2m$, i.e., $|x| > m$. Let $x^{(-m)} \in \mathcal J_{-m}(x)$ and $y^{(-m)} \in \mathcal J_{-m}(y)$. Then there exists a horizontal path joining $x^{(-m)}$ and $y^{(-m)}$ (otherwise $d_h(x^{(-m)}, y^{(-m)}) = + \infty$, and Lemma \ref{th2.6} implies that $d_h(x,y) = + \infty$, a contradiction). Now consider a new path: $x \to x^{(-m)}$ (along horizontal edges) $\to y^{(-m)} \to y$, and by comparing the length of this new path with the one of geodesic $\pi(x,y)$ we have
$$
2 m + d_h(x^{(-m)}, y^{(-m)}) \geq L+1.
$$
It follows that $d_h(x^{(-m)}, y^{(-m)}) \geq \lceil\frac{2m+1}{k}\rceil k+1 > \lceil\frac{2m+1}{k}\rceil k$. Making use of Lemma \ref{th2.6}, we have $d_h(x,y) > 2\lceil\frac{2m+1}{k}\rceil k \geq L+1$. This is a contradiction, and completes the proof.
\end{proof}

\medskip

The following theorem provides two useful criteria for the hyperbolicity of expansive graphs.

\medskip

\begin{theorem} \label{th2.11}
Let $(X,E)$ be an expansive graph. Then the following assertions are equivalent.

\ \ (i) $(X,E)$ is hyperbolic;

\ (ii) $\exists$ $L<\infty$ such that the lengths of all horizontal geodesics are
 bounded by $L$;

(iii) $(X,E)$ is $(m,k)$-departing for some positive integers $m,k$.
\end{theorem}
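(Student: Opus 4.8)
The plan is to treat the three conditions as a cycle. Since Proposition \ref{th2.10} already gives $(iii)\Rightarrow(ii)$, it suffices to prove $(i)\Rightarrow(ii)$, $(ii)\Rightarrow(i)$ and $(ii)\Rightarrow(iii)$; these, together with the cited implication, make $(i),(ii),(iii)$ equivalent. Throughout, the workhorse is the convex geodesic supplied by Proposition \ref{th2.3} together with identity \eqref{eq2.4}: for a convex geodesic $\pi(x,u,v,y)$ with common top level $m_{xy}:=|u|=|v|$, one has $(x|y)=m_{xy}-\tfrac12 d_h(u,v)$. I will also use repeatedly that, by the expansive property \eqref{eq2.2}, horizontal distances do not increase as one passes to predecessors, and the observation following Definition \ref{de2.1} that two predecessors of a common vertex lying in the same $X_n$ are automatically $\sim_h$.

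For $(i)\Rightarrow(ii)$, which is the easy direction, I would take a horizontal geodesic $[w_0,\dots,w_n]$, so that all $|w_i|=N$ and $d(w_0,w_n)=n$. Because subpaths of geodesics are geodesics, $(w_0|w_i)=N-\tfrac i2$, $(w_i|w_n)=N-\tfrac{n-i}2$ and $(w_0|w_n)=N-\tfrac n2$. Substituting $i=\lfloor n/2\rfloor$ into the $\delta$-inequality $(w_0|w_n)\ge\min\{(w_0|w_i),(w_i|w_n)\}-\delta$ yields $n\le 4\delta+O(1)=:L$, a uniform bound on horizontal geodesics.

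For $(ii)\Rightarrow(i)$, assuming horizontal geodesics have length $\le L$, identity \eqref{eq2.4} gives $m_{xy}-\tfrac L2\le (x|y)\le m_{xy}$ for every pair, so it is enough to control the top levels. Fixing $x,y,z$ and setting $m=\min\{m_{xz},m_{yz}\}$, I would descend the tops of the convex geodesics for $(x,z)$ and $(y,z)$ to level $m$, obtaining ancestors $P$ of $x$, $P'$ of $y$, and $Q,Q'$ of $z$ with $d_h(P,Q)\le L$ and $d_h(P',Q')\le L$; since $Q,Q'$ are both ancestors of $z$ in $X_m$, the observation after Definition \ref{de2.1} gives $Q\sim_h Q'$, hence $d_h(P,P')\le 2L+1$. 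The concatenation $x\to P\to P'\to y$ (vertical, horizontal, vertical) then gives $d(x,y)\le |x|+|y|-2m+(2L+1)$, i.e. $(x|y)\ge m-L-\tfrac12\ge\min\{(x|z),(y|z)\}-(L+\tfrac12)$, so the graph is hyperbolic with $\delta=L+\tfrac12$.

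The implication $(ii)\Rightarrow(iii)$ is the step I expect to be the main obstacle, since it converts the metric hypothesis into the combinatorial growth condition. I would set $k=L+2$ and $m=k$ and claim $(m,k)$-departing. Suppose $d_h(x,y)>k$ with $|x|=|y|=N$, pick $u\in\mathcal J_m(x),\,v\in\mathcal J_m(y)$, and argue by contradiction that $d_h(u,v)\le 2k$. Take a convex geodesic $\pi(u,\hat u,\hat v,v)$ with top level $M$; its length $2(N+m-M)+d_h(\hat u,\hat v)$ equals $d(u,v)\le d_h(u,v)\le 2k$, which forces $M\ge N+m-k=N$. This is the decisive ``vertical budget'' point: a short horizontal distance downstairs leaves no room for the geodesic to climb past level $N$. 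Consequently the horizontal part sits at level $M\ge N$ with $d_h(\hat u,\hat v)\le L$, and pulling $\hat u,\hat v$ up to level $N$ (again $d_h$ cannot increase) produces level-$N$ ancestors $\bar u$ of $u$, $\bar v$ of $v$ with $d_h(\bar u,\bar v)\le L$. Since $\bar u,x$ (resp. $\bar v,y$) are level-$N$ predecessors of the common vertex $u$ (resp. $v$), the observation after Definition \ref{de2.1} gives $\bar u\sim_h x$ and $\bar v\sim_h y$, whence $d_h(x,y)\le 1+L+1=k$, contradicting $d_h(x,y)>k$. The delicate part here is the bookkeeping of constants so that the collapse of same-level ancestors of a common descendant beats the assumed horizontal distance; the expansive property is what guarantees at each stage that passing to ancestors never spreads points apart.
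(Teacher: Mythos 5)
Your proposal is correct. Its core, the equivalence (ii)$\Leftrightarrow$(iii), runs along essentially the same lines as the paper's proof: both use Proposition \ref{th2.10} for (iii)$\Rightarrow$(ii), and for (ii)$\Rightarrow$(iii) both take a convex geodesic (Proposition \ref{th2.3}) between the descendants, observe that a short distance downstairs leaves no vertical budget for its top to climb above the parents' level, pull the top vertices up to that level, and invoke the remark after Definition \ref{de2.1} that same-level predecessors of a common vertex are horizontal neighbours. The differences there are only cosmetic: you obtain $(L+2,L+2)$-departing by a direct contradiction covering all of $d_h(u,v)\le 2k$ at once, while the paper obtains $(L+1,L+2)$-departing via the contrapositive restricted to the window $L+2<d_h(x',y')\le 2(L+2)$ (the sub-window case being trivial by \eqref{eq2.2}). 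Where you genuinely diverge is (i)$\Leftrightarrow$(ii): the paper disposes of this with a citation to \cite[Theorem 2.3]{LW1}, whereas you prove both directions from scratch --- (i)$\Rightarrow$(ii) by the standard Gromov-product computation along a horizontal geodesic (giving $L\le 4\delta+1$), and (ii)$\Rightarrow$(i) by comparing the tops of the convex geodesics for $(x,z)$ and $(y,z)$ at the common level $m=\min\{m_{xz},m_{yz}\}$, yielding the explicit constant $\delta=L+\tfrac12$. This buys a self-contained and quantitative proof at the cost of some length. One tiny imprecision, not a gap: \eqref{eq2.2} gives $d_h$ of predecessors bounded by $\max\{d_h \text{ of descendants},\,1\}$ rather than outright monotonicity, so a few of your bounds ``$\le L$'' should read ``$\le\max\{L,1\}$''; since $L\ge 1$ whenever $E_h\neq\emptyset$ (and the case $E_h=\emptyset$ is trivial, expansiveness then forcing $(X,E_v)$ to be a tree), all your estimates survive with at most a shift of $1$ in the constants.
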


\medskip

\begin{proof}
(i) $\Leftrightarrow$ (ii) follows from a similar proof as in \cite[Theorem 2.3]{LW1}.

\vspace{1mm}

(iii) $\Rightarrow$ (ii) follows from Proposition \ref{th2.10}.

\vspace{0.1cm}

(ii) $\Rightarrow$ (iii): We claim that $(X,E)$ is $(L+1,L+2)$-departing. Indeed, let $x,y \in X$, $x' \in \mathcal J_{L+1}(x)$ and $y' \in \mathcal J_{L+1}(y)$ satisfying $L+2< d_h(x',y') \leq 2(L+2)$ (see Figure \ref{fig:pf}). By Proposition \ref{th2.3}, there exists a convex geodesic segment $\pi(x',u,v,y')$ between $x'$ and $y'$, and  $u \neq x'$ (by the first inequality and (ii)). Let $u, v \in X_j$. Then
$$
2(L+2) \geq d_h(x',y') > d(x',y') = 2(|x'|- |j|)+d_h(u,v)\geq 2(|x'|-j).
$$
As $\ell:=|x'|-j \leq L+1 = |x'|-|x|$, we have $j \geq |x|$. Let $u' \in \mathcal J_*(x) \cap \mathcal J_{-*}(x') \cap  X_j$ and $v' \in \mathcal J_*(y) \cap \mathcal J_{-*}(y') \cap X_j$.
Since $x' \in \mathcal J_\ell (u) \cap \mathcal J_\ell (u')$,  $u$ and $u'$ are predecessors of $x'$, and we have $ u \sim_h u'$ by the expansive property. Similarly $ v\sim_h v'$. Hence by \eqref{eq2.2} and (ii),
$$
d_h(x,y) \leq \max\{d_h(u',v'),1\}  \leq d_h(u,v) + 2 \leq L+2.
$$
This proves the claim.
\end{proof}

\begin{figure}[ht]
\begin{center}
\includegraphics[scale=0.6]{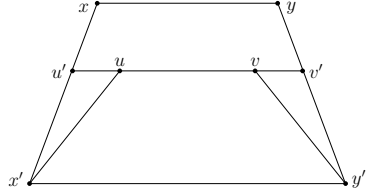}
\caption{Illustration for the proof of Theorem \ref{th2.11}.} \label{fig:pf}
\end{center}
\end{figure}

We will call the graphs in the above theorem {\bf expansive hyperbolic graphs}.

\bigskip

\section{Hyperbolic boundaries}
\label{sec:3}

In an infinite graph $(X,E)$ with root $\vartheta$, we let
\begin{equation} \label{eq3.1}
\mathcal R_v := \{{\bf x} = [x_i]_{i=0}^\infty: x_0 = \vartheta, \hbox{ and } x_{i+1} \in \mathcal J_1(x_i),\ \forall\ i \geq 0\}
\end{equation}
denote the class of (geodesic) rays starting from the root $\vartheta$. For brevity, we shall use the bold symbols such as ${\bf x},{\bf y},{\bf z}$ to denote the rays $[x_i]_i,[y_i]_i,[z_i]_i$ in $\mathcal R_v$ respectively.

\bigskip

 In this section, we assume that $(X,E)$ is hyperbolic. It follows from \eqref{eq2.3} that
\begin{equation} \label{eq3.2}
\theta_a(x,y) \asymp e^{-a(x|y)}, \qquad \forall \ x,y \in X
\end{equation}
(see \cite[p.245]{Wo}), and hence the topology on the $\theta_a$-completion $\widehat{X}$ is independent of the value of $a$ (as $\theta_a(\cdot,\cdot) \asymp \theta_b(\cdot,\cdot)^{a/b}$ for $a,b>0$); it is also known that both $\widehat{X}$ and $\partial X$ are compact.

\medskip

Every ray in $\mathcal R_v$ is $\theta_a$-Cauchy.
Note that for ${\bf x}, {\bf y} \in \mathcal R_v$, the Gromov product $(x_i|y_i)$ is increasing in $i$ by the triangle inequality. By using this we define $({\bf x}|{\bf y}) := {\lim}_{i \to \infty} (x_i|y_i)$.
We say that ${\bf x},{\bf y} \in \mathcal R_v$ are {\it equivalent} if they converge to the same point in $\partial X$; this holds if and only if $({\bf x}|{\bf y}) = \infty$. With such equivalence, the quotient of $\mathcal R_v$ is identified with $\partial X$.

\medskip

 For an integer $k \geq 0$ and two rays ${\bf x}, {\bf y} \in \mathcal R_v$, we define
\begin{equation} \label{eq3.3}
|{\bf x} \vee {\bf y}|_k := \sup\{i \geq 0: d_h(x_i,y_i) \leq k\}.
\end{equation}
Clearly if $E=E_v$, $|{\bf x} \vee {\bf y}|_k = \sup\{i \geq 0: x_i = y_i\}$ for all $k \geq 0$, and equals $({\bf x}|{\bf y})$ when $(X,E)$ is a tree. Note that if $|{\bf x} \vee {\bf y}|_k = \infty$, i.e., $d_h(x_i,y_i) \leq k$ for all $i \geq 0$, then
by
\begin{equation} \label{eq3.4}
(x_i|y_i) = |x_i| - \frac 12 d(x_i, y_i) \geq i - \frac k2,
\end{equation}
we have $\lim_{i \to \infty} (x_i|y_i) =\infty$, i.e., ${\bf x}$ and ${\bf y}$ are equivalent.

\medskip

Furthermore, if the hyperbolic graph $(X,E)$ is expansive, then  by Theorem \ref{th2.11}, it is $(m,k)$-departing for some $m,k > 0$. This provides a more concrete characterization of the equivalence classes in $\mathcal R_v$ as follows, which is used for some estimations in the sequel.

\begin{proposition} \label{th3.1}
Suppose the rooted graph $(X,E)$ is expansive and $(m,k)$-departing. Then there exists a constant $D_0=D_0(m,k) > 0$ such that
\begin{equation}  \label{eq3.5}
| ({\bf x}|{\bf y}) - |{\bf x} \vee {\bf y}|_k | \leq D_0, \qquad \forall\ {\bf x}, {\bf y} \in \mathcal R_v.
\end{equation}
Consequently, two rays ${\bf x},{\bf y}$ in $\mathcal R_v$ are equivalent if and only if $d_h(x_i,y_i) \leq k$ for all $i \geq 0$.
\end{proposition}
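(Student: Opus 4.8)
The plan is to prove the two-sided estimate \eqref{eq3.5} directly, and to read off the equivalence criterion from the monotonicity that underlies it. Write $N := |{\bf x} \vee {\bf y}|_k$. The first step is to observe that $\{i \ge 0 : d_h(x_i,y_i) \le k\}$ is an initial segment of $\{0,1,2,\dots\}$: since $x_i \in \mathcal J_1(x_{i-1})$ and $y_i \in \mathcal J_1(y_{i-1})$, the expansive inequality \eqref{eq2.2} gives $\max\{d_h(x_i,y_i),1\} \ge d_h(x_{i-1},y_{i-1})$, and as $k \ge 1$, $d_h(x_i,y_i) \le k$ forces $d_h(x_{i-1},y_{i-1}) \le k$. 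Hence either $d_h(x_i,y_i)\le k$ exactly for $i \le N$, or $N = \infty$ and $d_h(x_i,y_i)\le k$ for all $i$. Moreover, once $i > N$ we have $d_h(x_i,y_i) > k \ge 1$, so \eqref{eq2.2} also makes $i \mapsto d_h(x_i,y_i)$ nondecreasing for $i \ge N+1$. This monotonicity will both yield the final clause and drive the growth estimate below.

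For the lower bound I assume $N < \infty$ (the case $N=\infty$ gives $({\bf x}|{\bf y})=\infty$ by \eqref{eq3.4} and needs no estimate) and route through the level-$N$ vertices: for $i \ge N$, $d(x_i,y_i) \le (i-N) + d_h(x_N,y_N) + (i-N) \le 2(i-N)+k$, so $(x_i|y_i) = i - \tfrac12 d(x_i,y_i) \ge N - k/2$, and letting $i \to \infty$ gives $({\bf x}|{\bf y}) \ge N - k/2$.

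The upper bound is the substantive part. For each $i$, Proposition \ref{th2.3} supplies a convex geodesic $\pi(x_i,u,v,y_i)$ with $u,v \in X_j$, and \eqref{eq2.4} gives $(x_i|y_i) = (u|v) = j - \tfrac12 d_h(u,v)$. Since $u$ and $x_j$ are both predecessors of $x_i$ at level $j$, the consequence of expansiveness recorded after Definition \ref{de2.1} gives $d_h(u,x_j) \le 1$, and likewise $d_h(v,y_j) \le 1$; the triangle inequality for $d_h$ then yields $d_h(u,v) \ge d_h(x_j,y_j)-2$, so $(x_i|y_i) \le j - \tfrac12 d_h(x_j,y_j) + 1$. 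It therefore suffices to bound $\sup_{j\ge 0}\bigl(j - \tfrac12 d_h(x_j,y_j)\bigr)$ by $N + C$ for a constant $C = C(m,k)$. For $j \le N$ this quantity is at most $N$. For $j > N$ I exploit exponential growth of the horizontal distance: iterating the $(m,\ell k)$-departing property of Lemma \ref{th2.6} with $\ell = 2^t$, starting from $d_h(x_{N+1},y_{N+1}) > k$, yields $d_h(x_{N+1+tm},y_{N+1+tm}) > 2^t k$ for all $t \ge 0$, and the monotonicity from the first step fills in the intermediate levels to give $d_h(x_j,y_j) > 2^{\lfloor (j-N-1)/m\rfloor}k$ for $j > N$. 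Substituting and writing $t = \lfloor (j-N-1)/m\rfloor$, the quantity is dominated by $N + 1 + (t+1)m - \tfrac12 2^t k$, and $t \mapsto (t+1)m - \tfrac12 2^t k$ is bounded above on $t \ge 0$ by a constant depending only on $m,k$. Hence $(x_i|y_i) \le N + C$ for all $i$, so $({\bf x}|{\bf y}) \le N + C$, and taking $D_0 = \max\{k/2,\,C\}$ establishes \eqref{eq3.5}.

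The equivalence criterion then follows: ${\bf x},{\bf y}$ are equivalent iff $({\bf x}|{\bf y}) = \infty$, which by \eqref{eq3.5} happens iff $N = \infty$, which by the initial-segment observation happens iff $d_h(x_i,y_i) \le k$ for all $i$. I expect the main obstacle to be the upper bound, specifically calibrating the exponential growth by iterating Lemma \ref{th2.6} with the correct $\ell$ at each stage and then using the monotonicity to cover the levels $j$ not of the form $N+1+tm$, so that the supremum over \emph{all} $j$ is genuinely $N + C(m,k)$ rather than merely finite for each fixed $i$.
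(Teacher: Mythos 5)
Your proof is correct, and its skeleton matches the paper's: both dispose of the case $|{\bf x} \vee {\bf y}|_k = \infty$ first, both obtain the lower bound by routing a path through level $N = |{\bf x} \vee {\bf y}|_k$, and both obtain the upper bound by representing $(x_i|y_i)$ via a convex geodesic (Proposition \ref{th2.3} and \eqref{eq2.4}), relating its bottom vertices $u,v$ to the ray vertices $x_j, y_j$ through the remark after Definition \ref{de2.1}, and exploiting the exponential growth $d_h(x_{N+1+tm},y_{N+1+tm}) > 2^t k$ obtained by iterating Lemma \ref{th2.6} (this is exactly the paper's \eqref{eq3.6}). The one genuine divergence is how the upper bound is closed: the paper invokes Proposition \ref{th2.10} to bound the horizontal part of the convex geodesic by $L = L(m,k)$, deduces $d_h(x_{|u|},y_{|u|}) \leq L+2$, and caps $|u| \leq mj_0+\ell-1$ where $j_0$ is the least integer with $2^{j_0}k \geq L+2$; you instead keep $d_h(x_j,y_j)$ in the estimate, $(x_i|y_i) \leq j - \tfrac12 d_h(x_j,y_j) + 1$, and bound $\sup_j \bigl(j - \tfrac12 d_h(x_j,y_j)\bigr)$ by a direct linear-versus-exponential comparison. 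Your route avoids Proposition \ref{th2.10} entirely, making the argument more self-contained, at the cost of a slightly messier constant. It is also worth noting that both arguments require the monotonicity of $i \mapsto d_h(x_i,y_i)$ beyond level $N$ in order to pass from the levels $N+1+tm$ to all intermediate levels: you state and prove this explicitly from \eqref{eq2.2}, whereas the paper uses it silently in the step ``$|u|=|v| \leq mj_0+\ell-1$''; likewise, your explicit initial-segment observation is what justifies the final ``for all $i \geq 0$'' clause, which the paper leaves implicit.
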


\begin{proof}  Note that $|{\bf x} \vee {\bf y}|_k = \infty$ implies $({\bf x}|{\bf y}) = \infty$. Hence we need only prove \eqref{eq3.5} for the case that $|{\bf x} \vee {\bf y}|_k<\infty$. Set $\ell := |{\bf x} \vee {\bf y}|_k+1$. Then $d_h(x_{\ell-1}, y_{\ell-1}) \leq k$, and $d_h(x_\ell, y_\ell) > k$. Using \eqref{eq3.4} for $i = \ell-1$, we have
$$
({\bf x}|{\bf y}) \geq (x_{\ell-1} | y_{\ell-1}) \geq \ell-1-k/2 =|{\bf x} \vee {\bf y}|_k - k/2.
$$
On the other hand, by using the $(m,k)$-departing in Lemma \ref{th2.6} repeatedly, we have
\begin{equation} \label{eq3.6}
d_h(x_{mj+\ell}, y_{mj + \ell}) > 2^j k, \quad \forall \  j \geq 1.
\end{equation}
 For $j \geq 0$, we choose $u,v \in X$ such that $\pi(x_{mj+\ell},u,v,y_{mj+\ell})$ is a convex geodesic.
Then $x_{|u|} \sim_h u$ and $v \sim_h y_{|u|}$ (by the remark after Definition \ref {de2.1}).  By Proposition \ref{th2.10}, there exists $L=L(m,k)>0$ such that
\begin{equation}\label{eq3.7}
d_h(x_{|u|}, y_{|v|}) \leq d_h(x_{|u|}, u) + d_h(u, v) + d_h(v, y_{|v|}) \leq 1 + L +1 = L+2.
\end{equation}
Let $j_0 = j_0(m,k)$ be the smallest integer such that $2^{j_0}k \geq L+2$. It follows from \eqref {eq3.6} and \eqref{eq3.7} that $|u|= |v| \leq mj_0 +\ell-1$.  Hence by
\eqref {eq2.4}, we have
\begin{equation*} 
(x_{mj+\ell}| y_{mj+\ell}) = (u|v) =|u| - \frac{1}{2} d_h(u, v) \leq m j_0 + \ell -1,
\end{equation*}
 and this implies that $({\bf x}|{\bf y}) \leq m j_0 + \ell -1 = |{\bf x} \vee {\bf y}|_k+ mj_0$. Setting $D_0 = \max\{k/2, mj_0\}$, inequality \eqref{eq3.5} follows.
\end{proof}

\medskip

By Proposition \ref{th3.1}, we can extend the Gromov product to $\widehat{X}$ by letting
\begin{equation} \label{eq3.8}
(x|\xi) = \sup\{\lim_{i \to \infty} (x|x_i)\}, \qquad (\xi|\eta) = \sup\{\lim_{i \to \infty} (x_i|y_i)\},
\end{equation}
where $x \in X$, $\xi,\eta \in \partial X$, and the supremum is taking over all rays  $[x_i]_i$ and $[y_i]_i$ in ${\mathcal R}_v$ that converge to $\xi$ and $\eta$ respectively;
the differences of the limits for different rays are at most $k$ if $(X,E)$ is $(m,k)$-departing.
Then the Gromov metric $\theta_a(\cdot,\cdot)$ on $\widehat{X}$ satisfies the same estimate as in \eqref{eq3.2}.

\bigskip

Throughout the rest of the paper, we will assume that $\mathcal J_1(x) \neq \emptyset$ for all $x \in X$.  In this section we will investigate  the metric doubling property of $(\partial X,\theta_a)$. For $x \in X$ and $\xi \in \partial X$, we extend the partial order $\prec$ in Section \ref{sec:2} by writing $\xi \prec x$ if $x$ lies on some ray $[x_i]_i \in \mathcal R_v$ that converges to $\xi$. Define
 $$
 \mathcal J_\partial(x):=\{\xi \in \partial X: \xi \prec x\}
 $$
and call it the {\it cell} of $x$ in $\partial X$.  Then it is clear that each $\mathcal J_\partial(x)$ is a nonempty compact subset of $\partial X$.
We remark that in the context of IFS with attractor $K$  and an associated graph structure $(X, E)$ of the symbolic space, $\mathcal J_\partial(x)$ plays the role of the $x$-cell $K_x$  in $K$ (see Appendix);  this will be discussed in detail in Section \ref{sec:4}.

\medskip

\begin{proposition} \label{th3.2}
Let $(X,E)$ be an $(m,k)$-departing expansive graph. Then there exists a constant $\gamma >0$ (depending on $a$) such that for $x,y \in X_n$, $n\geq 1$,
\begin{equation*}
 d_h(x,y) > k \quad \Rightarrow \quad {\rm dist}_{\theta_a}(\mathcal J_\partial(x), \mathcal J_\partial(y)) > \gamma e^{-an}.
\end{equation*}
\end{proposition}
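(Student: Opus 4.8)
The plan is to reduce the distance estimate to an upper bound on the extended Gromov product $(\xi|\eta)$ for boundary points $\xi \in \mathcal{J}_\partial(x)$, $\eta \in \mathcal{J}_\partial(y)$, and then invoke $\theta_a(\xi,\eta) \asymp e^{-a(\xi|\eta)}$. Concretely, I claim there is a constant $C = C(m,k)$ so that $(\xi|\eta) \le n + C$ whenever $\xi \prec x$, $\eta \prec y$ with $x,y \in X_n$ and $d_h(x,y) > k$. Granting this, the extension of \eqref{eq3.2} to $\widehat{X}$ gives $\theta_a(\xi,\eta) \ge c\,e^{-a(\xi|\eta)} \ge (c\,e^{-aC})\,e^{-an}$; taking the infimum over $\xi \in \mathcal{J}_\partial(x)$, $\eta \in \mathcal{J}_\partial(y)$ and shrinking the constant slightly to make the inequality strict yields the proposition with $\gamma$ depending on $a$.

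To bound $(\xi|\eta)$ I must control \emph{all} rays limiting to $\xi$ and $\eta$, since by \eqref{eq3.8} this product is a supremum over such rays. First I would record a monotonicity fact extracted from the expansive inequality \eqref{eq2.2}: if $d_h(x_i,y_i) \ge 2$ then $d_h(x_{i+1},y_{i+1}) \ge d_h(x_i,y_i)$ along any pair of rays; hence once the horizontal distance between two rays exceeds $k$ (so is $\ge k+1 \ge 2$) it never returns to a value $\le k$. This turns the index $|{\bf x}\vee{\bf y}|_k$ of \eqref{eq3.3} into, in effect, a first-passage level.

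Next, fix arbitrary rays ${\bf x}' \to \xi$ and ${\bf y}' \to \eta$. Since $\xi \prec x$ and $\eta \prec y$, there are \emph{reference} rays ${\bf x} \to \xi$, ${\bf y} \to \eta$ with $x_n = x$ and $y_n = y$; by Proposition \ref{th3.1} equivalent rays stay within horizontal distance $k$, so $d_h(x_i',x_i) \le k$ and $d_h(y_i',y_i) \le k$ for all $i$. The reference rays pass through $x$ and $y$, so I can run the iterated departing estimate of \eqref{eq3.6} starting from $d_h(x_n,y_n) = d_h(x,y) > k$: at level $n+2m$ (i.e.\ $j=2$) one gets $d_h(x_{n+2m},y_{n+2m}) > 4k$. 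Transferring to the primed rays by the triangle inequality, $d_h(x_{n+2m}',y_{n+2m}') > 4k - 2k = 2k > k$, and by the monotonicity fact this persists for all larger levels. Therefore $|{\bf x}'\vee{\bf y}'|_k \le n+2m-1$, whence Proposition \ref{th3.1} gives $({\bf x}'|{\bf y}') \le |{\bf x}'\vee{\bf y}'|_k + D_0 \le n+2m-1+D_0$. As this bound is uniform over all rays to $\xi$ and $\eta$, taking the supremum gives $(\xi|\eta) \le n + C$ with $C = 2m-1+D_0$, which is exactly the claim.

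The main obstacle I anticipate is precisely the quantifier in \eqref{eq3.8}: $\xi \prec x$ only guarantees \emph{one} ray through $x$, whereas $(\xi|\eta)$ sees every ray to $\xi$. The device that resolves it is to compare an arbitrary ray with a reference ray through $x$ via the equivalence characterization of Proposition \ref{th3.1}; the departing property must be applied to the reference ray (which genuinely descends from $x$) and then pushed to the arbitrary ray at the cost of an additive $2k$. This is why I descend to level $n+2m$, so that the $4k$ of separation survives the $2k$ loss, rather than stopping at level $n+m$ where only $2k$ of separation is available.
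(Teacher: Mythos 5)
Your proof is correct and follows the paper's strategy: bound the extended Gromov product $(\xi|\eta)$ by $n$ plus a constant using Proposition \ref{th3.1}, then convert to a metric lower bound via $\theta_a(\xi,\eta) \asymp e^{-a(\xi|\eta)}$. The only real divergence is in how the supremum over rays in \eqref{eq3.8} is handled: the paper invokes the remark following \eqref{eq3.8} that the limits along different equivalent rays differ by at most $k$, so that $(\xi|\eta) \leq ({\bf x}|{\bf y})+k \leq |{\bf x}\vee{\bf y}|_k + k + D_0 < n+k+D_0$ for the reference rays through $x$ and $y$; you instead re-derive this control from scratch by descending $2m$ levels, applying the departing property twice to obtain $4k$ of separation, and sacrificing $2k$ through the triangle inequality to pass to arbitrary rays. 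Both routes are valid; yours is more self-contained (and it also makes explicit the monotonicity of $d_h$ along descendants coming from \eqref{eq2.2}, which the paper uses silently when asserting $|{\bf x}\vee{\bf y}|_k < n$), at the cost of the slightly larger constant $2m-1+D_0$ in place of $k+D_0$.
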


\begin{proof}
 For $\xi \in \mathcal J_\partial(x)$ and $\eta \in \mathcal J_\partial(y)$, we choose two rays ${\bf x}, {\bf y} \in \mathcal R_v$ that pass through $x,y$ and converge to $\xi,\eta$ respectively. Then $|{\bf x} \vee {\bf y}|_k < n$. It follows from \eqref{eq3.5} that
$$
(\xi|\eta) \leq ({\bf x}|{\bf y})+k \leq |{\bf x} \vee {\bf y}|_k+k+D_0 < n+k+D_0.
$$
Hence $\theta_a(\xi,\eta) \geq c e^{-a(\xi|\eta)} > \gamma e^{-a|x|}$ with $\gamma = ce^{-a(k+D_0)}$.
\end{proof}

\medskip

For an integer $k \geq 0$, define the {\it $k$-shadow} of $x$ in $\partial X$ by
\begin{equation} \label{eq3.9}
\mathcal J^k_\partial(x) := {\bigcup}\{\mathcal J_\partial(y):d_h(x,y) \leq k\}.
\end{equation}
(Hence $\mathcal J^0_\partial(x) = \mathcal J_\partial(x)$.)

\medskip

\begin{proposition} \label{th3.3}
Let $(X,E)$ be an $(m,k)$-departing  expansive graph. Then
there exists a constant $C \geq 1$ (depending on $a$) such that
\begin{equation} \label{eq3.10}
B_{\theta_a}(\xi,C^{-1}e^{-a|x|}) \subset \mathcal J^k_\partial(x) \subset B_{\theta_a}(\xi,Ce^{-a|x|}), \quad \forall\ x \in X, \ \xi \in \mathcal J_\partial(x).
\end{equation}
\end{proposition}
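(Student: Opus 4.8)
The plan is to convert both inclusions into two–sided bounds on the extended Gromov product $(\xi|\eta)$, exploiting that the comparison \eqref{eq3.2}, namely $c_1 e^{-a(\xi|\eta)} \le \theta_a(\xi,\eta) \le c_2 e^{-a(\xi|\eta)}$ for some $c_1,c_2>0$, persists on $\widehat X$. Writing $n=|x|$, it suffices to establish the two implications
\begin{equation*}
\eta \in \mathcal J^k_\partial(x) \ \Longrightarrow \ (\xi|\eta) \ge n - D_0, \qquad (\xi|\eta) \ge n + k + D_0 \ \Longrightarrow \ \eta \in \mathcal J^k_\partial(x),
\end{equation*}
where $D_0=D_0(m,k)$ is the constant from Proposition \ref{th3.1}. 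Granting these, the right–hand inclusion follows upon taking $C \ge c_2 e^{aD_0}$ and the left–hand inclusion upon taking $C \ge c_1^{-1}e^{a(k+D_0)}$ (together with $C\ge 1$); the proposition then reduces to verifying the two displayed implications.

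For the first implication I would argue directly. If $\eta \in \mathcal J^k_\partial(x)$, then by \eqref{eq3.9} there is $y \in X_n$ with $d_h(x,y)\le k$ and $\eta \in \mathcal J_\partial(y)$. I choose a ray ${\bf x} \in \mathcal R_v$ through $x$ converging to $\xi$ and a ray ${\bf y} \in \mathcal R_v$ through $y$ converging to $\eta$. At level $n$ one has $d_h(x_n,y_n)=d_h(x,y)\le k$, so $|{\bf x}\vee{\bf y}|_k \ge n$, and Proposition \ref{th3.1} gives $({\bf x}|{\bf y}) \ge |{\bf x}\vee{\bf y}|_k - D_0 \ge n-D_0$. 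Since $(\xi|\eta)$ is the supremum of such limits over all representing rays (see \eqref{eq3.8}), $(\xi|\eta) \ge ({\bf x}|{\bf y}) \ge n-D_0$, as required.

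For the second implication I would combine the control of $(\xi|\eta)$ by an arbitrary representing pair with a monotonicity observation. First, expansiveness makes the set $\{i: d_h(x_i,y_i)\le k\}$ downward closed: a horizontal path realizing $d_h(x_i,y_i)\le k$ descends vertex by vertex (each $z\sim_h z'$ forcing $z^-\sim_h z'^{\,-}$ for chosen predecessors) to a horizontal path between $x_{i-1}$ and $y_{i-1}$ of no greater length, so $|{\bf x}\vee{\bf y}|_k \ge n$ forces $d_h(x_n,y_n)\le k$. Now suppose $(\xi|\eta) \ge n+k+D_0$. I fix the ray ${\bf x}$ through $x$ and take any ray ${\bf y}$ converging to $\eta$; by the remark after \eqref{eq3.8} the limit $({\bf x}|{\bf y})$ differs from $(\xi|\eta)$ by at most $k$, whence $({\bf x}|{\bf y}) \ge (\xi|\eta)-k \ge n+D_0$, and Proposition \ref{th3.1} yields $|{\bf x}\vee{\bf y}|_k \ge ({\bf x}|{\bf y})-D_0 \ge n$. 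By downward closure this gives $d_h(x,y_n)=d_h(x_n,y_n)\le k$, and since $\eta \in \mathcal J_\partial(y_n)$ we conclude $\eta \in \mathcal J^k_\partial(x)$.

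The step I expect to require the most care is the second implication, precisely because $(\xi|\eta)$ is a supremum over all pairs of representing rays: the abstract largeness of $(\xi|\eta)$ must be transferred to the single ray ${\bf x}$ that actually passes through $x$. The $(m,k)$-departing hypothesis is what makes this transfer legitimate, since it guarantees (via Proposition \ref{th3.1} and the remark after \eqref{eq3.8}) that all representing rays stay within horizontal distance $k$ and that their Gromov products agree up to the additive constants $k$ and $D_0$; the downward–closure observation is the remaining ingredient needed to pin the estimate to level $n$ exactly.
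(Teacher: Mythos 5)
Your proof is correct and follows essentially the same route as the paper: both inclusions are reduced to two-sided estimates on the extended Gromov product via Proposition \ref{th3.1}, the downward closure of $\{i: d_h(x_i,y_i)\le k\}$ coming from expansiveness, and the comparison \eqref{eq3.2} extended to $\widehat X$. The only differences are organizational --- the paper packages your second implication in contrapositive form as the separate cell-separation result Proposition \ref{th3.2}, and for the right-hand inclusion it uses the monotonicity of $(x_i|z_i)$ along rays directly (yielding the constant $e^{ak/2}$ in place of your $e^{aD_0}$) --- but the underlying ideas coincide.
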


\begin{proof}
Suppose $x \in X_n$.
For $\eta \in \partial X \setminus \mathcal J_\partial^k(x)$, we choose $y \in X_n$ with $\eta \in \mathcal J_\partial(y)$. Then $d_h(x,y) > k$, and by Proposition \ref{th3.2} we have
$$
\theta_a(\xi,\eta) \geq {\rm dist}_{\theta_a}(\mathcal J_\partial(x), \mathcal J_\partial(y)) > \gamma e^{-an}, \qquad \forall \ \xi \in \mathcal J_\partial(x).
$$
This shows that $B_{\theta_a}(\xi,\gamma e^{-a|x|}) \subset \mathcal J_\partial^k(x)$ for all $\xi \in \mathcal J_\partial(x)$.

\vspace{1mm}

For $\zeta \in \mathcal J^k_\partial(x)$, we choose $z \in X_n$ that satisfies $d_h(x,z) \leq k$ and $\zeta \in \mathcal J_\partial(z)$. Then it follows from \eqref{eq3.8} that
$$
(\xi|\zeta) \geq (x|z) = n-\frac 12 d(x,z) \geq n-\frac k2, \qquad \forall\ \xi \in \mathcal J_\partial(x).
$$
Hence $\theta_a(\xi, \zeta) \leq C_1e^{-a(\xi|\zeta)} \leq C_2e^{-a|x|}$ with $C_2=C_1 e^{ak/2}$, and $\mathcal J^k_\partial(x) \subset B_{\theta_a}(\xi,C_2 e^{-a|x|})$ for all $\xi \in \mathcal J_\partial (x)$.
\end{proof}

\medskip

We remark that in \eqref{eq3.10}, $\mathcal J^k_\partial(x)$ cannot be simply replaced by $\mathcal J_\partial(x)$, as is shown in the following simple example that the left inclusion does not hold.

\medskip

\begin{example} \label{ex3.4}
{\rm Let $(X,E)$ be the rooted graph as in Figure \ref{fig:4}. It is an expansive graph, in which any (nontrivial) horizontal geodesic has length $1$. By Theorem \ref{th2.11}, the graph is hyperbolic.  Let $x = x_1$ be as in the figure. Then it is seen that $\mathcal J_\partial(x) = \{\xi\}$, where $\xi$ is the limit of the ray ${\bf x} = [x_i]_{i=0}^\infty$ in $\partial X$. There is another ray ${\bf a} =[a_i]_{i=0}^\infty$ converging to $\xi$.

\vspace {0.1cm}

For $n \geq 1$, let $\eta_n$ be the limit of the ray $[\vartheta, a_1, a_2,\cdots,a_{n+1},b_n,\cdots]$ in $\partial X$. Then $\partial X = \{\xi,\eta_1,\eta_2,\cdots\}$.
By \eqref{eq3.8}, we have $(\xi | \eta_n) = n + 1$ and $\theta_a(\xi, \eta_n) \asymp e^{-a(n + 1)} \to 0$ as $n \to \infty$. We see that  $\mathcal J_\partial (x)$ cannot contain any  ball $B(\xi, r), r>0$. \hfill $\square$}

\begin{figure}[ht]
\begin{center}
\includegraphics[scale=0.35]{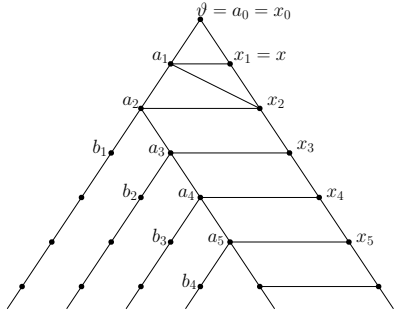}
\caption{$\mathcal J_\partial(x)$ does not contain any ball.} \label{fig:4}
\end{center}
\end{figure}
\end{example}

\begin{definition} \label{de3.5} \hspace{-2mm} {\rm \cite{He}}
A metric space $(M,\rho)$ is called {\rm (metric) doubling} if there is an integer $\ell>0$ such that for any $\xi \in M$ and $r>0$, the ball $B_\rho(\xi,r)$ can be covered by a union of at most $\ell$ balls of radii $r/2$.
\end{definition}

\medskip

\begin{theorem} \label{th3.6}
Suppose $(X,E)$ is an expansive hyperbolic graph, and has bounded degree (i.e., $\sup_{x\in X}\deg(x) < \infty$).  Then the hyperbolic boundary $(\partial X, \theta_a)$ is doubling.
\end{theorem}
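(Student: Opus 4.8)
The plan is to translate the doubling condition of Definition \ref{de3.5}, which concerns metric balls, into a counting statement about the cells $\mathcal J_\partial(\cdot)$ and their shadows, where the bounded-degree hypothesis can be exploited directly. Fix the integers $m,k$ supplied by Theorem \ref{th2.11}, and set $D:=\sup_{x\in X}\deg(x)<\infty$. Bounded degree yields two combinatorial facts that I will use throughout. First, the horizontal degree of any vertex is at most $D$, so the horizontal ball $\{z:d_h(x,z)\le k\}$ contains at most $M_k:=1+D+\cdots+D^k$ elements; by \eqref{eq3.9} each $k$-shadow $\mathcal J^k_\partial(x)$ is therefore a union of at most $M_k$ cells. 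Second, $\#\mathcal J_p(x)\le D^p$ for every $x$ and $p$, and the definition of $\prec$ gives $\mathcal J_\partial(x)=\bigcup_{y\in\mathcal J_p(x)}\mathcal J_\partial(y)$ (here the standing assumption $\mathcal J_1(x)\neq\emptyset$ is used), so every cell is a union of at most $D^p$ cells lying $p$ levels below it.

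Given $\xi\in\partial X$ and $0<r\le\diam(\partial X)$, I would choose $n=n(r)$ to be the largest nonnegative integer with $C^{-1}e^{-an}\ge r$, where $C$ is the constant of Proposition \ref{th3.3}; then $e^{-an}\asymp r$ with constants depending only on $a$ and $C$. Fixing a ray in $\mathcal R_v$ converging to $\xi$ and letting $x:=x_n\in X_n$ be its level-$n$ vertex (every ray meets every level), the left inclusion in \eqref{eq3.10} gives $B_{\theta_a}(\xi,r)\subset B_{\theta_a}(\xi,C^{-1}e^{-an})\subset\mathcal J^k_\partial(x)$. By the first combinatorial fact this shadow is a union of at most $M_k$ cells $\mathcal J_\partial(z)$ with $z\in X_n$.

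It then remains to refine these $M_k$ cells into pieces of diameter less than $r/2$. I would choose an integer $p_0$, depending only on $a$ and $C$, so large that the right inclusion in \eqref{eq3.10} forces $\diam\mathcal J_\partial(y)\le 2Ce^{-a(n+p_0)}=2Ce^{-ap_0}e^{-an}<r/2$ for every $y\in X_{n+p_0}$; this is possible precisely because $e^{-an}\asymp r$ uniformly in $\xi$ and $r$. By the second combinatorial fact, each of the $\le M_k$ cells $\mathcal J_\partial(z)$ is a union of at most $D^{p_0}$ cells $\mathcal J_\partial(y)$ with $y\in\mathcal J_{p_0}(z)$, and each such nonempty cell, having diameter less than $r/2$, lies in a single ball $B_{\theta_a}(\eta_y,r/2)$ for any $\eta_y\in\mathcal J_\partial(y)$. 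Hence $B_{\theta_a}(\xi,r)$ is covered by at most $\ell:=M_kD^{p_0}$ balls of radius $r/2$, a bound independent of $\xi$ and $r$. For $r>\diam(\partial X)$ the ball $B_{\theta_a}(\xi,r)$ equals $\partial X$, and the cover produced at the scale $\diam(\partial X)$ already consists of $\le\ell$ balls of radius at most $r/2$; so the same $\ell$ works and $(\partial X,\theta_a)$ is doubling.

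The step I expect to be the main obstacle is the uniform bookkeeping just sketched: one must verify that a single $p_0$ serves at all scales, which hinges on $e^{-an}\asymp r$ with constants coming solely from Proposition \ref{th3.3}, and one must treat the boundary regime where $r$ is comparable to $\diam(\partial X)$ (so that $n$ is pushed down to $0$, $x=\vartheta$, and $\mathcal J^k_\partial(\vartheta)=\partial X$). These checks are routine but must be carried out carefully so that $\ell$ depends only on $m,k,D$ and $a$, and not on the point $\xi$ or the radius $r$.
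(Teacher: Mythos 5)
Your proposal is correct and follows essentially the same route as the paper's proof: both place the ball $B_{\theta_a}(\xi,r)$ inside a $k$-shadow $\mathcal J^k_\partial(x)$ at a level $\asymp \frac{1}{a}\log\frac 1r$ via the left inclusion of Proposition \ref{th3.3}, refine into descendant cells a uniformly bounded number of levels deeper so that the right inclusion of \eqref{eq3.10} puts each such cell inside a ball of radius $r/2$, and count the pieces (horizontal neighbors times descendants) using bounded degree. The only differences are cosmetic bookkeeping (your $M_k D^{p_0}$ versus the paper's $t^{k+\ell_1}$, and your explicit treatment of the large-$r$ regime, which the paper handles through its definition of $m^*(r)$ and the case $m^*(r)=0$).
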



\begin{proof}
Fix $a>0$  with $e^{\delta a} < \sqrt{2}$, and suppose $(X,E)$ is $(m,k)$-departing (by Theorem \ref{th2.11}). Let $C \geq 1 $ be the constant as in \eqref {eq3.10}.  For $r \in (0,1]$, let $m^*(r)$\ ($m_*(r)$) be the smallest (largest respectively) nonnegative integers such that
$$
C^{-1} e^{-a(m^*(r)+1)} < r < 2Ce^{-a(m_*(r)-1)}.
$$
It follows that $m_*(r)-m^*(r) < \lceil\frac{\log(2C^2)}{a}\rceil+2 =:\ell_1$. For a ball $B_{\theta_a}(\xi,r) \subset \partial X$, let $x$ be the vertex such that $|x| = m^*(r)$ and $\xi \in \mathcal J_\partial(x)$.  We claim that $B_{\theta_a}(\xi,r) \subset \mathcal J_\partial^k(x)$. Indeed, if $m^*(r) = 0$, then $x = \vartheta$, and trivially $B_{\theta_a}(\xi,r) \subset \partial X = \mathcal J_\partial^k(\vartheta)$; if $m^*(r) \geq 1$, then by Proposition \ref{th3.3} we have $B_{\theta_a}(\xi,r) \subset B_{\theta_a}(\xi,C^{-1}e^{-am^*(r)}) \subset \mathcal J_\partial^k(x)$ as well. By the claim and Proposition \ref{th3.3}, we have
\begin{align*}
B_{\theta_a}(\xi,r) &\subset \mathcal J^k_\partial(x) = {\bigcup}_{y:d_h(x,y) \leq k}  {\bigcup}_{ z \in \mathcal J_{\ell_1}(y)}\mathcal J_\partial(z)\\
&\subset {\bigcup}_{y:d_h(x,y) \leq k} \ {\bigcup}_{ z \in \mathcal J_{\ell_1}(y)} B_{\theta_a}(\eta_z,Ce^{-am_*(r)}) \qquad  (\hbox  {by \eqref{eq3.10}})\\
&\subset {\bigcup}_{y:d_h(x,y) \leq k} \ {\bigcup}_{ z \in \mathcal J_{\ell_1}(y)}B_{\theta_a}(\eta_z,r/2),
\end{align*}
where each $\eta_z$ is chosen arbitrarily from $\mathcal J_\partial(z)$. Let $t = \sup_{x \in X} \deg(x)$, which is finite by assumption. Then
$$
\# \{y: d_h(x,y) \leq k\} \leq t^k \quad \hbox{and} \quad \# \mathcal J_{\ell_1}(y) \leq t^{\ell_1}.
$$
Hence the ball $B_{\varrho_a}(\xi,r)$ is covered by a union of at most $t^{k+\ell_1}$ many balls of radii $r/2$, and $(\partial X, \theta_a)$ is doubling.
\end{proof}

\noindent {\bf Remark 1.}
A similar result was proved in another setting  by Bonk and Schramm \cite[Theorem 9.2]{BS}; here our proof on expansive hyperbolic graphs is more straightforward.

\medskip

\noindent {\bf Remark 2.}
Note that the doubling property of $(\partial X, \theta_a)$ does not imply the bounded degree property of $(X,E)$. For example, if we take $E_h=\{(x,y) \in X \times X \setminus \Delta: |x|=|y|\}$, then it is trivially an expansive hyperbolic graph, and $\partial X$ is a singleton (so it is doubling trivially); but $\deg(x) \geq \#X_n -1$ for $x \in X_n$, which has no bound if $\#X_n$ tends to $\infty$.

However, this converse can be verified  with some further separation properties among the cells (Theorem \ref{th5.5}).

\bigskip

\section{Index maps and augmented graphs}
\label{sec:4}

In this section, we  fix a complete metric space $(M, \rho)$, and let ${\mathcal C}_M$ denote the family of all nonempty compact subsets of $M$. By our convention in last section, any graph $(X,E)$ mentioned below is assumed to satisfy ${\mathcal J}_\partial (x) \not = \emptyset$ for all $x\in X$.

\begin{definition} \label{de4.1}
Let $(X, E_v)$ be a vertical graph. A map $\Phi : X \rightarrow {\mathcal C}_M $ is called an {\rm index map} (on $(X,E_v)$ over $(M,\rho)$) if it satisfies

\ (i) \ $\Phi(y) \subset \Phi(x)$ for all $x \in X$ and $y \in \mathcal J_1(x)$;

(ii) \ $\bigcap_{i=0}^\infty \Phi(x_i)$ is a singleton for all ${\bf x} = [x_i]_i \in \mathcal R_v$.

\vspace {0.1cm}

\noindent  In particular, such $\Phi$ is called {\rm saturated} if (i) is strengthened to $\Phi(x) = {\bigcup}_{y \in \mathcal J_1(x)} \Phi(y)$.

\vspace {0.1cm}
\noindent We call
$K := {\bigcap}_{n=0}^\infty\, \Big({\bigcup}_{x \in X_n}\, \Phi(x)\Big)$
the {\rm attractor} of $\Phi$, and $K_x:= \Phi(x) \cap K$ a {\rm cell} of $K$.

\end{definition}

\noindent {\bf Remark 1.}
An index map  $\Phi$ induces another index map $\Phi' : X \rightarrow {\mathcal C}_K$ with $\Phi'(x) = K_x$; the image $\{K_x\}_{x\in X}$ is the family of cells of $K$ indexed by $X$.  Since $\Phi$ and $\Phi'$ {\it behave the same at infinity}  (i.e., $\bigcap_{i=0}^\infty \Phi(x_i) = \bigcap_{i=0}^\infty \Phi'(x_i)$ for all ${\bf x} = [x_i]_i \in \mathcal R_v$), we use these two interchangeably.

\medskip

\noindent {\bf Remark 2.}
 For a saturated index map $\Phi$, the attractor $K = \Phi(\vartheta)$, and the cell $K_x = \Phi(x)$ for all $x \in X$.
In fact, every index map $\Phi$ induces a saturated index map $\widetilde\Phi: X \to \mathcal C_K$ with
\begin{equation} \label{eq4.1}
\widetilde\Phi(x) := {\bigcap}_{n=0}^\infty\, \Big({\bigcup}_{y \in {\mathcal J}_n(x)}\, \Phi(y)\Big), \qquad x \in X,
\end{equation}
which also behaves the same as $\Phi$ at infinity.
It is clear that $\widetilde\Phi(x) \subset K_x$ for all $x\in X$, but the reverse inclusion does not hold in general.

\medskip

The index map $\Phi$ defines a mapping $\kappa_0: \mathcal R_v \to K$ by
\begin{equation} \label{eq4.2}
\{\kappa_0({\bf x})\} = {\bigcap}_{i=0}^\infty \, \Phi(x_i), \qquad \forall\ {\bf x} = [x_i]_i \in \mathcal R_v.
\end{equation}
Using the local finiteness of $(X,E_v)$ and a diagonal argument (cf.~\cite{Wo, LW3}), we can show that the image of $\kappa_0$ is equal to $K$.
From Section \ref{sec:3}, we see that for an expansive hyperbolic graph $(X,E)$,  the hyperbolic boundary $\partial X$ can be identified with a quotient set of $\mathcal R_v$.  Hence the induced map $ \kappa: \partial X \to K$ from the quotient is well-defined if $ \kappa_0$ satisfies: ${\bf x}$, ${\bf y} \ \hbox {are equivalent}\ \Rightarrow \ \kappa_0 ({\bf x}) = \kappa_0 ({\bf y})$; furthermore $\kappa: \partial X \to K$  is one-to-one if  the converse is also satisfied.
With these,  we see that if $\kappa_0$ satisfies
\begin{equation} \label{eq4.3}
{\bf x} \ \hbox {and} \  {\bf y}  \ \hbox{are equivalent} \quad \Leftrightarrow \quad \kappa_0 (\bf x) = \kappa_0 (\bf y),
\end{equation}
then it induces a bijection $\kappa: \partial X \to K$ via the quotient.

\medskip

\begin{definition} \label{de4.2}
We call $(X,E,\Phi)$ an {\rm admissible index triple} if it satisfies

\ \ (i) \ $(X,E)$ is an expansive hyperbolic graph;

\ (ii) \ $\Phi: X \to {\mathcal C}_M$ is an index map on $(X,E_v)$ over $(M,\rho)$;

(iii) \  $\kappa : \partial X \to K$  is a  well-defined bijection,  i.e., \eqref{eq4.3} holds.

\noindent In such case, $(X,E)$ is said to be an {\rm admissible (augmented) graph}  (associated to $\Phi$); if $(X, E_v)$ is a tree, then we call $(X,E)$ an {\rm admissible augmented tree}.
\end{definition}

By Remark 2,
\begin{equation} \label{eq4.4}
\kappa (\mathcal J_\partial (x)) = \widetilde \Phi(x) \subset \Phi(x), \qquad \forall \ x \in X; 
\end{equation}
and the inclusion is an ``=" if and only if the index map $\Phi$ is saturated.  Via the bijection $\kappa$, the Gromov metric $\theta_a$ on $\partial X$  defines naturally a metric $\tilde\theta_a$ on the attractor $K$ by
\begin{equation} \label{eq4.9}
\tilde\theta_a(\xi,\eta) = \theta_a(\kappa^{-1}(\xi),\kappa^{-1}(\eta)), \qquad \xi,\eta \in K.
\end{equation}

\begin{proposition} \label{th4.3}
For an admissible index triple, the bijection $\kappa$ is a homeomorphism.
\end{proposition}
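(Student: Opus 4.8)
The plan is to reduce the statement to a single continuity assertion and then exploit compactness. First I note that the attractor $K$ is compact: each $X_n$ is finite by local finiteness, so each $\bigcup_{x\in X_n}\Phi(x)$ is a finite union of compact sets, and these decrease in $n$ (since $\Phi(x')\subset\Phi(x)$ whenever $x'\in\mathcal J_1(x)$), whence $K=\bigcap_n\bigcup_{x\in X_n}\Phi(x)$ is a nonempty compact subset of $M$, in particular a Hausdorff metric space. Since $\partial X$ is compact (Section \ref{sec:3}) and $\kappa:\partial X\to K$ is a bijection, it suffices to show that $\kappa$ is continuous; the standard fact that a continuous bijection from a compact space to a Hausdorff space is a homeomorphism then finishes the proof. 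As both spaces are metric, I will verify sequential continuity.

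So I would fix $\xi\in\partial X$, set $p=\kappa(\xi)$, take $\xi_j\to\xi$ in $\theta_a$, and argue by contradiction: if $\kappa(\xi_j)\not\to p$, then by compactness of $K$ some subsequence satisfies $\kappa(\xi_{j_l})\to q$ with $\rho(q,p)>0$, so $q\neq p$. Writing $q=\kappa(\zeta)$ (by surjectivity), injectivity of $\kappa$ gives $\zeta\neq\xi$. The goal is then to force $\zeta=\xi$, contradicting this.

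The engine is the two-sided shadow--ball comparison of Proposition \ref{th3.3}. Fix a ray $[x_i]_i\in\mathcal R_v$ converging to $\xi$, so that $\xi\in\mathcal J_\partial(x_n)$ and $|x_n|=n$ for every $n$. For each fixed $n$, the inclusion $B_{\theta_a}(\xi,C^{-1}e^{-an})\subset\mathcal J^k_\partial(x_n)$ shows that $\xi_{j_l}\in\mathcal J^k_\partial(x_n)$ for all large $l$, hence, using $\kappa(\mathcal J_\partial(y))=\widetilde\Phi(y)$ from \eqref{eq4.4},
$$
\kappa(\xi_{j_l})\in\kappa\big(\mathcal J^k_\partial(x_n)\big)={\bigcup}\{\widetilde\Phi(y):d_h(x_n,y)\le k\}.
$$
Because $(X,E)$ is locally finite, $\{y:d_h(x_n,y)\le k\}$ is a finite set, so this is a finite union of compacta and therefore closed; passing to the limit in $l$ gives $q\in\bigcup\{\widetilde\Phi(y):d_h(x_n,y)\le k\}$. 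Thus $q\in\widetilde\Phi(y)=\kappa(\mathcal J_\partial(y))$ for some $y$ with $d_h(x_n,y)\le k$, and injectivity of $\kappa$ yields $\zeta=\kappa^{-1}(q)\in\mathcal J_\partial(y)\subset\mathcal J^k_\partial(x_n)$. Invoking the other inclusion $\mathcal J^k_\partial(x_n)\subset B_{\theta_a}(\xi,Ce^{-an})$ of Proposition \ref{th3.3} gives $\theta_a(\xi,\zeta)\le Ce^{-an}$, valid for every $n$; letting $n\to\infty$ forces $\zeta=\xi$, the desired contradiction. Hence $\kappa$ is continuous, and by the reduction above it is a homeomorphism.

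The step I expect to be the crux is the transport of the limit point $q$ back to the boundary: one must package the image of a small metric ball as the $\kappa$-image of a $k$-shadow $\mathcal J^k_\partial(x_n)$ (which is exactly where \eqref{eq4.4} and the use of the saturated map $\widetilde\Phi$, rather than $\Phi$ itself, are essential), observe its closedness via local finiteness, and then use injectivity of $\kappa$ to convert membership of $q$ in nearby cells into membership of $\zeta$ in the shadow, so that the two inclusions of Proposition \ref{th3.3} can squeeze $\zeta$ onto $\xi$. The remaining ingredients---compactness of $\partial X$ and $K$, and the compact-to-Hausdorff homeomorphism criterion---are routine.
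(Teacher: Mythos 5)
Your argument is correct, but there is nothing in the paper to measure it against: the authors explicitly decline to prove Proposition \ref{th4.3} here, deferring it to the forthcoming paper \cite{KLWa} (``as the proof requires more preparatory work''), and they only quote the statement in passing later (after Theorem \ref{th6.3}). So your proposal is not a variant of the paper's proof but a self-contained substitute for it, and I checked it against the paper's toolkit step by step. The reduction to continuity is sound: $\partial X$ is compact (Section \ref{sec:3}), $K$ is compact since connectivity plus local finiteness makes each $X_n$ finite and every $y\in X_{n+1}$ has a predecessor in $X_n$, so $K$ is a nested intersection of compacta; the compact-to-Hausdorff criterion then applies. Proposition \ref{th3.3} is legitimately available because an admissible graph is expansive hyperbolic, hence $(m,k)$-departing by Theorem \ref{th2.11}, and your use of it is two-sided in exactly the right way: the left inclusion traps the approximating points $\xi_{j_l}$ in $\mathcal J_\partial^k(x_n)$, the right inclusion squeezes $\zeta$ onto $\xi$. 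The crux you identify is indeed the crux: to convert $q\in\bigcup\{\widetilde\Phi(y): d_h(x_n,y)\le k\}$ into $\zeta\in\mathcal J^k_\partial(x_n)$ one must work with the saturated map $\widetilde\Phi(y)=\kappa(\mathcal J_\partial(y))$ from \eqref{eq4.4} rather than with $\Phi(y)$ or $K_y$, since $\widetilde\Phi(y)$ can be a proper subset of $K_y$ and only the identity $\widetilde\Phi(y)=\kappa(\mathcal J_\partial(y))$ lets injectivity of $\kappa$ pull $q$ back to a boundary point of the shadow; closedness of the finite union (local finiteness bounds $\#\{y:d_h(x_n,y)\le k\}$, and each $\widetilde\Phi(y)$ is compact) is what justifies passing to the limit in $l$. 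In short: a complete and correct proof, using only results established in the paper, of a statement the paper itself leaves unproved.
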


 We  will prove this in \cite{KLWa}, as the proof requires more preparatory work and we will not need the proposition here.

\bigskip

For a subset $E$ in $(M, \rho)$, we denote the diameter of $E$ by $|E|_\rho$ (or simply by $|E|$).  In Definition \ref{de4.1}, we see that the family  $\{\Phi(x)\}_{x\in X}$ satisfies ${\lim}_{n \to \infty} {\sup}_{x \in \mathcal J_n} |\Phi(x)|_\rho = 0$. For $b \in (0,\infty)$, we say that $\{\Phi(x)\}_{x\in X}$ (or $\Phi$) is of {\it exponential type-$(b)$} (under $\rho$) if the diameter $|\Phi(x)|_\rho$ is decreasing in a rate of  $e^{-b|x|}$,  i.e., $|\Phi(x)|_\rho = O(e^{-b|x|})$ as $|x| \to \infty$, and call $\Phi$ an {\it exponential type} if it is of type-$(b)$ for some $b \in (0,\infty)$.

\bigskip

 We mainly consider the following two classes of expansive graphs associated to index maps, which are motivated by the {\it augmented trees} of the IFS's \cite {Ka,LW1, LW3, Wa} (see Appendix).

\begin{definition} \label{de4.4}
 Let $\Phi$  be an index map on the vertical graph $(X, E_v)$. We define a horizontal edge set by
\begin{equation} \label{eq4.5}
E_h^{(\infty)}  :={\bigcup}_{n=1}^\infty \big\{(x,y) \in X_n \times X_n \setminus \Delta: \Phi(x)\cap \Phi(y)\not = \emptyset \big\},
\end{equation}
and let $E^\infty = E_v \cup E_h^{(\infty)}$. We call $(X, E^{(\infty)})$ an {\rm $AI_{\infty}$-graph}, {\rm augmented index graph of type-$(\infty)$} (or {\rm intersection type}).

\vspace{0.1cm}
In addition, assume that $\Phi$ is of exponential type-$(b)$. For a fixed $\gamma >0$, we define
\begin{equation} \label{eq4.6}
E_h^{(b)}  :={\bigcup}_{n=1}^\infty \big\{(x,y) \in X_n \times X_n \setminus \Delta: {\rm dist}_\rho (\Phi(x),\Phi(y)) \leq \gamma e^{-bn}\big\}.
\end{equation}
Let $E^{(b)} = E_v \cup E^{(b)}_h$. We call  $(X, E^{(b)})$ an {\rm $AI_b$-graph},   {\rm  augmented index graph of type-$(b)$}.
\end{definition}

\bigskip

It is clear that both $(X, E^{(\infty)})$ and $(X, E^{(b)})$ are expansive.  Comparing the two definitions, the $AI_\infty$-graph is more intuitive but needs more  information on the neighborhood of $\Phi(x), \ x \in X$ under the given metric $\rho$; the $AI_b$-graph is more flexible on the neighboring cells, which actually makes it easier to handle.  First we prove

\begin{theorem} \label{th4.5}
For an index map $\Phi$ on $(X,E_v)$ over $(M, \rho)$ of exponential type-$(b)$, the associated  $AI_b$-graph is  $(m,1)$-departing, and is an admissible graph.
Moreover,  $\kappa: (\partial X, \theta_a) \to (K, \rho)$  is a  H\"older equivalence with exponent $b/a$, i.e.,
\begin{equation} \label{eq4.7}
\rho(\kappa(\xi),\kappa(\eta))^{a/b} \asymp  \theta_a(\xi,\eta), \qquad \forall\ \xi,\eta \in \partial X.
\end{equation}
\end{theorem}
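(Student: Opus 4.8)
The plan is to establish the three assertions in order: the $(m,1)$-departing property, admissibility (from which hyperbolicity is automatic), and the H\"older equivalence \eqref{eq4.7}. Throughout I fix $C_0$ with $|\Phi(x)|_\rho \le C_0 e^{-b|x|}$ for all $x$ (available since $\Phi$ is of exponential type-$(b)$), and use the set-distance inequality $\mathrm{dist}_\rho(A,C) \le \mathrm{dist}_\rho(A,B) + |B|_\rho + \mathrm{dist}_\rho(B,C)$ for nonempty compact $A,B,C$.

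For the $(m,1)$-departing property I argue by contraposition. Let $x,y \in X_n$, $u \in \mathcal J_m(x)$, $v \in \mathcal J_m(y)$ with $d_h(u,v) \le 2$; the goal is $d_h(x,y) \le 1$. A horizontal path of length at most $2$ from $u$ to $v$ passes through some $w \in X_{n+m}$ with $d_h(u,w) \le 1$ and $d_h(w,v) \le 1$, so by \eqref{eq4.6} both $\mathrm{dist}_\rho(\Phi(u),\Phi(w))$ and $\mathrm{dist}_\rho(\Phi(w),\Phi(v))$ are at most $\gamma e^{-b(n+m)}$. The set-distance inequality with $|\Phi(w)|_\rho \le C_0 e^{-b(n+m)}$ yields $\mathrm{dist}_\rho(\Phi(u),\Phi(v)) \le (2\gamma + C_0)e^{-b(n+m)}$, and since $\Phi(u)\subset\Phi(x)$, $\Phi(v)\subset\Phi(y)$ by Definition \ref{de4.1}(i), we get $\mathrm{dist}_\rho(\Phi(x),\Phi(y)) \le (2\gamma+C_0)e^{-bm}e^{-bn}$. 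Choosing $m$ so large that $(2\gamma+C_0)e^{-bm}\le\gamma$ forces $\mathrm{dist}_\rho(\Phi(x),\Phi(y)) \le \gamma e^{-bn}$, i.e.\ $d_h(x,y)\le 1$. Thus the graph is $(m,1)$-departing, and being also expansive it is hyperbolic by Theorem \ref{th2.11}.

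For admissibility only the bijection condition \eqref{eq4.3} remains. By Proposition \ref{th3.1} (with $k=1$) two rays ${\bf x},{\bf y}\in\mathcal R_v$ are equivalent iff $d_h(x_i,y_i)\le 1$ for all $i$. If they are equivalent, then $\mathrm{dist}_\rho(\Phi(x_i),\Phi(y_i))\le\gamma e^{-bi}$, and since $\kappa_0({\bf x})\in\Phi(x_i)$ and $\kappa_0({\bf y})\in\Phi(y_i)$, the set-distance inequality gives $\rho(\kappa_0({\bf x}),\kappa_0({\bf y}))\le(2C_0+\gamma)e^{-bi}\to 0$, so $\kappa_0({\bf x})=\kappa_0({\bf y})$. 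Conversely, if $\kappa_0({\bf x})=\kappa_0({\bf y})$ then this common point lies in $\Phi(x_i)\cap\Phi(y_i)$ for every $i$, whence $\mathrm{dist}_\rho(\Phi(x_i),\Phi(y_i))=0$ and $d_h(x_i,y_i)\le 1$ for all $i$, giving equivalence. So \eqref{eq4.3} holds and the triple is admissible.

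For \eqref{eq4.7}, fix $\xi,\eta\in\partial X$, choose rays ${\bf x}\to\xi$, ${\bf y}\to\eta$, and set $N:=|{\bf x}\vee{\bf y}|_1$, so $d_h(x_N,y_N)\le 1$ while $d_h(x_{N+1},y_{N+1})>1$. With $\zeta=\kappa(\xi)$, $\omega=\kappa(\eta)$, the one-level estimate of the previous paragraph at level $N$ gives the upper bound $\rho(\zeta,\omega)\le(2C_0+\gamma)e^{-bN}$; for the lower bound, $d_h(x_{N+1},y_{N+1})>1$ means $\mathrm{dist}_\rho(\Phi(x_{N+1}),\Phi(y_{N+1}))>\gamma e^{-b(N+1)}$, and as $\zeta\in\Phi(x_{N+1})$, $\omega\in\Phi(y_{N+1})$ this forces $\rho(\zeta,\omega)>\gamma e^{-b}e^{-bN}$. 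Hence $\rho(\zeta,\omega)\asymp e^{-bN}$. Meanwhile Proposition \ref{th3.1} together with the remark after \eqref{eq3.8} bounds $|(\xi|\eta)-N|$ by a constant, so \eqref{eq3.2} (extended to $\widehat X$) gives $\theta_a(\xi,\eta)\asymp e^{-a(\xi|\eta)}\asymp e^{-aN}$. Combining, $\rho(\zeta,\omega)^{a/b}\asymp e^{-aN}\asymp\theta_a(\xi,\eta)$, as required. I expect the main obstacle to be the level-shifting bookkeeping in the $(m,1)$-departing step: correctly propagating a short horizontal connection at generation $n+m$ back to a set-distance bound at generation $n$ through the exponential decay, since this single estimate simultaneously yields hyperbolicity and, reused at the branch point $N$, the sharp two-sided comparison underlying \eqref{eq4.7}.
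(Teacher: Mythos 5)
Your proof is correct and follows essentially the same route as the paper's: the contrapositive ``triangle inequality through the middle vertex'' estimate with $m$ chosen so that $(2\gamma+\delta_0)e^{-bm}\le\gamma$ for the $(m,1)$-departing property, Proposition \ref{th3.1} with $k=1$ for the admissibility/bijection condition \eqref{eq4.3}, and the two-sided estimate at the branch level $n=|{\bf x}\vee{\bf y}|_1$ (lower bound from $d_h(x_{n+1},y_{n+1})>1$, upper bound from the diameter-plus-distance bound at level $n$) for the H\"older equivalence. The only cosmetic difference is that you work with arbitrary rays and absorb the discrepancy via the remark after \eqref{eq3.8}, where the paper selects rays attaining $(\xi|\eta)=({\bf x}|{\bf y})$; this changes nothing.
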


\begin{proof} It is easy to check from the definition of $E^{(b)}_h$ that $(X, E^{(b)})$ is expansive. To show that it is $(m,1)$-departing for some $m\geq 1$, let $\delta_0:={\sup}_{z \in X}\, e^{b|z|} |\Phi(z)|$.
Let $u \in \mathcal J_m(x)$ and $v \in \mathcal J_m(y)$ with $d_h(u,v)=2$. Using the triangle inequality twice, we have
$$
{\rm dist}(\Phi(x),\Phi(y)) \leq {\rm dist}(\Phi(u),\Phi(v)) \leq (2\gamma+\delta_0)e^{-b(|x|+m)} \leq \gamma  e^{-b|x|},
$$
where the positive integer $m$ is chosen to give the last inequality, i.e., $(2\gamma+\delta_0)e^{-bm} \leq \gamma$. Therefore $x \sim_h y$, and this shows that $(X,E)$ is $(m,1)$-departing.  The hyperbolicity of $AI_b$-graph follows from Theorem \ref{th2.11}.

\vspace {0.1cm}

By Proposition \ref{th3.1} (with $k=1$) and \eqref{eq4.6}, we see that  two rays ${\bf x},{\bf y}$ are equivalent if and only if ${\rm dist} (\Phi(x_i),\Phi(y_i)) \leq \gamma  e^{-bi}$ for all $i$. This verifies \eqref{eq4.3} so that $\kappa : \partial X \rightarrow K$ is a well-defined bijection,  and hence  $(X,E^{(b)})$ is an  admissible graph.

\vspace {0.1cm}
We now prove that $\kappa$ is a H\"older equivalence. For distinct $\xi,\eta \in \partial X$,
 we take two rays ${\bf x},{\bf y} \in \mathcal R_v$ that converge to $\xi,\eta$ respectively with $(\xi|\eta) = ({\bf x}|{\bf y})$ (by \eqref{eq3.8} and the following remark). Let $n = |{\bf x} \vee {\bf y}|_1$ as in \eqref{eq3.3} with $k=1$, i.e., $d_h(x_n,y_n) \leq 1$ and $d_h(x_{n+1},y_{n+1}) \geq 2$. By Proposition \ref{th3.1}, we have $|(\xi|\eta)-n| = |({\bf x}|{\bf y})-n| \leq D_0$ for some $D_0 := D_0(m,1) > 0$. As $\kappa(\xi) \in \Phi (x_{n+1}) \subset \Phi (x_n)$ and $\kappa(\eta) \in \Phi (y_{n+1}) \subset \Phi(y_n)$, we get the lower bound of \eqref{eq4.7} by
\begin{align*}
\rho(\kappa(\xi),\kappa(\eta)) &\geq {\rm dist} (\Phi(x_{n+1}),\Phi(y_{n+1})) \\
&\geq \gamma e^{-b(n+1)} \geq \gamma e^{-b(D_0+1)}e^{-b(\xi|\eta)} \geq C_1\theta_a(\xi,\eta)^{b/a},
\end{align*}
and the upper bound by
\begin{align*}
\rho(\kappa(\xi),\kappa(\eta)) &\leq |\Phi(x_n)| +{\rm dist}(\Phi(x_n),\Phi(y_n)) + |\Phi(y_n)|\\
&\leq (2\delta_0+\gamma)e^{-bn} \leq (2\delta_0+\gamma) e^{bD_0}e^{-b(\xi|\eta)} \leq C_2 \theta_a(\xi,\eta)^{b/a}.
\end{align*}
This completes the proof.
\end{proof}

\bigskip

Now we turn to study the $AI_\infty$-graphs. Unlike the $AI_b$-graph, the $AI_\infty$-graph is not always hyperbolic (see Example \ref{ex6.1}), and  sufficient conditions for its hyperbolicity will be provided in Section \ref{sec:5}. The following result shows that if it is hyperbolic, then the admissibility follows.

\begin{proposition} \label{th4.6}
Suppose the $AI_\infty$-graph $(X,E^{(\infty)})$ is hyperbolic. Then for two rays ${\bf x},{\bf y} \in \mathcal R_v$, the following assertions are equivalent.

\vspace{0.1cm}
(i) \ ${\bf x}$ and ${\bf y}$ are equivalent; \quad (ii) \ $\kappa_0({\bf x}) = \kappa_0({\bf y})$; \quad (iii) \ $d_h(x_i,y_i) \leq 1$ for all $i \geq 0$.

\vspace {0.1cm}
\noindent It follows from \eqref {eq4.3} that $(X,E^{(\infty)})$ is an admissible graph (Definition \ref {de4.2}).
 \end{proposition}

\begin{proof}
(iii) $\Rightarrow$ (i) is clear.
For (ii) $\Rightarrow$ (iii), since $\kappa_0({\bf x}) \in \Phi(x_i)$ and $\kappa_0({\bf y}) \in \Phi(y_i)$, we have $\Phi(x_i) \cap \Phi(y_i) \neq \emptyset$ for all $i \geq 0$. This yields (iii) by the definition \eqref{eq4.5} of $E_h^{(\infty)}$.

\vspace{1mm}

For (i) $\Rightarrow$ (ii), as $(X,E^{(\infty)})$ is expansive and hyperbolic, by Theorem \ref{th2.11}(iii) and Proposition \ref{th3.1}, there exists an integer $k>0$ such that $d_h(x_i,y_i) \leq k$ for all $i \geq 0$. We show inductively that any such $k$ will imply (ii). When $k = 1$, it follows that $\Phi(x_i) \cap \Phi(y_i) \neq \emptyset$ for all $i \geq 0$. By the compactness of $\Phi(x_i) \cap \Phi(y_i)$, the intersection $\big({\bigcap}_{i=0}^\infty \Phi(x_i)\big) \cap \big({\bigcap}_{i=0}^\infty \Phi(y_i)\big) = {\bigcap}_{i=0}^\infty (\Phi(x_i) \cap \Phi(y_i)) \neq \emptyset$, hence $\kappa_0({\bf x}) = \kappa_0({\bf y})$.

\vspace{1mm}

Inductively, suppose (ii) is verified when $k = n$ for some $n > 0$. Let ${\bf x},{\bf y} \in \mathcal R_v$ satisfy $d_h(x_i,y_i) \leq n+1$. Then for each $i \geq 0$, we choose $z_i \in X$ such that $d_h(x_i,z_i) \leq 1$ and $d_h(z_i,y_i) \leq n$. The sequence $\{z_i\}_{i=0}^\infty$ may not be a ray; however, using the local finiteness of $(X,E_v)$ and a diagonal argument, we can choose a ray ${\bf w} \in \mathcal R_v$ such that each $w_i$ contains an infinite subsequence of $\{z_i\}_{i=0}^\infty$, and the expansive property \eqref{eq2.2} implies that $d_h(x_i,w_i) \leq 1$ and $d_h(w_i,y_i) \leq n$ for all $i \geq 0$. Using the induction hypothesis, we have $\kappa_0({\bf x}) = \kappa_0({\bf w}) = \kappa_0({\bf y})$, and the proof is completed by induction.
\end{proof}

When $\Phi$ is of exponential type-$(b)$, comparing the $AI_\infty$-graph with the $AI_b$-graph, it is clear that $E^{(\infty)} \subset E^{(b)}$. The following is a consequence of Theorem \ref{th4.5}.

\begin{corollary} \label{th4.7}
Suppose the index map $\Phi$ is of exponential type-$(b)$, and the associated $AI_\infty$-graph $(X,E^{(\infty)})$ is hyperbolic. Then $\kappa: (\partial X,\theta_a) \to (K,\rho)$ is H\"older continuous  with exponent $b/a$, i.e.,
\begin{equation} \label{eq4.8}
\rho(\kappa(\xi),\kappa(\eta))^{a/b} \leq C \theta_a(\xi,\eta) , \qquad \forall\ \xi,\eta \in \partial X.
\end{equation}
\end{corollary}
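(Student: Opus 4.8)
The plan is to exploit the inclusion $E^{(\infty)} \subset E^{(b)}$ in order to reduce everything to the already-understood $AI_b$-graph from Theorem~\ref{th4.5}. First I would record the inclusion itself: since $\Phi(x)\cap\Phi(y)\neq\emptyset$ forces $\dist_\rho(\Phi(x),\Phi(y))=0\le \gamma e^{-bn}$, every horizontal edge of the $AI_\infty$-graph is a horizontal edge of the $AI_b$-graph, so $E^{(\infty)}\subset E^{(b)}$ while the two graphs share the same vertical part $E_v$. Because horizontal edges join vertices of equal level and cannot shorten the net vertical displacement, the level function $|x|=d(\vartheta,x)$ is the same in both graphs; moreover every path in $(X,E^{(\infty)})$ is a path in $(X,E^{(b)})$, whence $d^{(\infty)}(x,y)\ge d^{(b)}(x,y)$ for all $x,y\in X$. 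Feeding this into the Gromov product $(x|y)=\tfrac12(|x|+|y|-d(x,y))$ gives the pointwise inequality $(x|y)^{(\infty)}\le (x|y)^{(b)}$ on $X$.

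Next I would transport this to the common boundary. By Theorem~\ref{th4.5} the $AI_b$-graph is admissible, and by Proposition~\ref{th4.6} (using the hypothesis that $(X,E^{(\infty)})$ is hyperbolic) so is the $AI_\infty$-graph; in both cases the equivalence classes of rays in $\mathcal R_v$ are exactly the fibres of $\kappa_0$. Hence the two hyperbolic boundaries are canonically identified through their common ray representation, and $\kappa$ denotes the same bijection onto $K$ in either structure. Applying the boundary extension \eqref{eq3.8}, for $\xi,\eta\in\partial X$ the supremum defining $(\xi|\eta)$ runs over the same pairs of rays in both graphs; since $(x_i|y_i)^{(\infty)}\le (x_i|y_i)^{(b)}$ along each such pair, passing to the limit and taking the supremum yields $(\xi|\eta)^{(\infty)}\le (\xi|\eta)^{(b)}$.

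Finally I would convert the Gromov-product estimate into a metric estimate and close the loop. By \eqref{eq3.2}, valid on $\widehat X$ for each structure, $\theta_a^{(\infty)}(\xi,\eta)\asymp e^{-a(\xi|\eta)^{(\infty)}}\ge e^{-a(\xi|\eta)^{(b)}}\asymp \theta_a^{(b)}(\xi,\eta)$, so $\theta_a^{(b)}(\xi,\eta)\le C'\,\theta_a^{(\infty)}(\xi,\eta)$ for a constant $C'$. Combining with the H\"older equivalence $\rho(\kappa(\xi),\kappa(\eta))^{a/b}\asymp\theta_a^{(b)}(\xi,\eta)$ from Theorem~\ref{th4.5} gives $\rho(\kappa(\xi),\kappa(\eta))^{a/b}\le C\,\theta_a^{(\infty)}(\xi,\eta)$, which is \eqref{eq4.8}. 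The only genuinely delicate point is the bookkeeping in the middle paragraph: one must be sure that the two hyperbolic boundaries are literally the same quotient of $\mathcal R_v$, so that $\xi,\eta$ mean the same thing in both metrics and $\kappa$ is unambiguous, which is precisely what the admissibility of both graphs supplies. Everything else is a direct monotonicity argument; in particular no additive constant is needed in the boundary comparison, because the supremum in \eqref{eq3.8} is taken over the identical family of rays.
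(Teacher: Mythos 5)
Your proposal is correct and follows essentially the same route as the paper's own proof: both exploit the inclusion $E^{(\infty)} \subset E^{(b)}$ to get the monotonicity $d \geq d'$ and hence $(\xi|\eta) \leq (\xi|\eta)'$ after identifying the two boundaries via $\kappa_0 = \kappa_0'$ on $\mathcal R_v$ (admissibility from Theorem \ref{th4.5} and Proposition \ref{th4.6}), and then conclude by combining \eqref{eq3.2} with the H\"older equivalence of Theorem \ref{th4.5}. The only cosmetic difference is that you pass through the intermediate estimate $\theta_a^{(b)} \leq C'\theta_a^{(\infty)}$, while the paper goes directly from $e^{-a(\xi|\eta)'}$ to $\rho(\kappa'(\xi),\kappa'(\eta))^{a/b}$; this is immaterial.
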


\begin{proof} 
We consider the associated $AI_b$-graph $(X,E^{(b)})$, and denote its graph distance and Gromov product by $d'(\cdot,\cdot)$ and $(\cdot|\cdot)'$ respectively.
By Theorem \ref{th4.5}, $(X,E^{(b)})$ is hyperbolic,  and the bijection $\kappa': \partial X' \to K$ satisfies $\rho(\kappa'(\xi), \kappa'(\eta)) \asymp e^{-b(\xi|\eta)'}$ for all $\xi,\eta \in \partial X'$.
As $\kappa_0 = \kappa_0'$ on $\mathcal R_v$, it follows that $\partial X = \partial X'$ and $\kappa = \kappa'$.

\vspace{1mm}

Note that $E^{(\infty)} \subset E^{(b)}$. This implies $d(x,y) \geq d'(x,y)$, and
$$
(x|y) = \frac 12(|x|+|y|-d(x,y)) \leq \frac 12(|x|+|y|-d'(x,y)) = (x|y)'
$$
for all $x,y \in X$. Taking the limits in \eqref{eq3.8}, we have $(\xi|\eta) \leq (\xi|\eta)'$, and
$$
 \theta_a(\xi,\eta) \geq c_1 e^{-a(\xi|\eta)} \geq c_1 e^{-a(\xi|\eta)'} \geq  c_2 \rho(\kappa'(\xi),\kappa'(\eta))^{a/b} = c_2 \rho(\kappa(\xi),\kappa(\eta))^{a/b}.
$$
This verifies \eqref{eq4.8},  the H\"older continuity of $\kappa$. 
\end{proof}

\medskip

In general, we cannot expect this $\kappa$ to be a H\"older equivalence (see Example \ref {ex6.2}). In the following theorem,  we give a characterization for the hyperbolicity, or equivalently the $(m,k)$-departing property, of the $AI_\infty$-graph associated to a saturated index map (Remark 2 of Definition \ref {de4.1}), together with the H\"older equivalence to hold.

\medskip

\begin{theorem} \label{th4.8}
Suppose $\Phi$ is a saturated index map on $(X,E_v)$ over $(M,\rho)$. Then for $b \in (0,\infty)$ and an integer $k>0$, the following assertions are equivalent.

\vspace{1mm}

\ (i) \ The $AI_\infty$-graph $(X,E^{(\infty)})$ is $(m,k)$-departing for some $m>0$,
and $\kappa: (\partial X,\theta_a) \to (K,\rho)$ is a H\"older equivalence with exponent $b/a$, i.e.,
\begin{equation} \label{eq4.10}
\rho(\kappa(\xi),\kappa(\eta))^{a/b} \asymp  \theta_a(\xi,\eta), \qquad \forall\ \xi,\eta \in \partial X.
\end{equation}

(ii) \ $\Phi$ is of exponential type-$(b)$ under $\rho$, and there exists $\gamma>0$ such that $(X,E^{(\infty)})$ satisfies for $x, y \in X$,
\begin{equation} \label{eq4.11}
|x| = |y| \ \ \hbox{and} \ \ d_h(x,y) > k \quad \Rightarrow \quad {\rm dist}_\rho(\Phi(x),\Phi(y)) > \gamma e^{-b|x|}.
\end{equation}
\end{theorem}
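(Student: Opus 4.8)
The plan is to prove the equivalence by establishing each direction separately, using Theorem~\ref{th4.5} as the bridge between the $AI_\infty$-graph and its companion $AI_b$-graph. The key structural fact I would exploit is that since $\Phi$ is saturated, we have $\kappa(\mathcal J_\partial(x)) = \widetilde\Phi(x) = \Phi(x)$ by \eqref{eq4.4}, so the cells in $\partial X$ correspond exactly to the sets $\Phi(x)$ under $\kappa$; this lets me transfer metric estimates freely between $(\partial X, \theta_a)$ and $(K, \rho)$.

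\textbf{For (ii) $\Rightarrow$ (i):} Assume $\Phi$ is of exponential type-$(b)$ and that the separation condition \eqref{eq4.11} holds. First I would show the $(m,k)$-departing property directly. Suppose $d_h(x,y) > k$ for $x,y \in X_n$; then \eqref{eq4.11} gives $\dist_\rho(\Phi(x),\Phi(y)) > \gamma e^{-bn}$. For descendants $u \in \mathcal J_m(x)$, $v \in \mathcal J_m(y)$, I would use $\Phi(u) \subset \Phi(x)$, $\Phi(v) \subset \Phi(y)$ together with the exponential type bound $|\Phi(u)|_\rho, |\Phi(v)|_\rho = O(e^{-b(n+m)})$ to estimate how far apart the descendant cells must be, choosing $m$ large enough that the separation $\gamma e^{-bn}$ dominates $2k$ horizontal steps at level $n+m$ (each such step can decrease $\dist_\rho$ by at most a diameter-sized amount). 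This forces $d_h(u,v) > 2k$, giving $(m,k)$-departing and hence hyperbolicity by Theorem~\ref{th2.11}. For the H\"older equivalence, the upper bound $\rho(\kappa(\xi),\kappa(\eta))^{a/b} \leq C\theta_a(\xi,\eta)$ is exactly the H\"older continuity from Corollary~\ref{th4.7}, which applies once hyperbolicity is known. The reverse (lower) bound would come from \eqref{eq4.11}: taking rays converging to $\xi,\eta$ and letting $n = |{\bf x} \vee {\bf y}|_k$, the separation condition at the first level where $d_h > k$ yields a lower bound on $\rho(\kappa(\xi),\kappa(\eta))$ in terms of $e^{-bn}$, which by Proposition~\ref{th3.1} translates to $e^{-b(\xi|\eta)}$ and hence to $\theta_a(\xi,\eta)^{b/a}$, mirroring the argument in the proof of Theorem~\ref{th4.5}.

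\textbf{For (i) $\Rightarrow$ (ii):} Assume the $AI_\infty$-graph is $(m,k)$-departing and $\kappa$ is a H\"older equivalence with exponent $b/a$. I would first extract the exponential type-$(b)$ property from the H\"older equivalence: since $\kappa(\mathcal J_\partial(x)) = \Phi(x)$, the diameter $|\Phi(x)|_\rho$ equals the $\rho$-diameter of $\kappa(\mathcal J_\partial(x))$, and by the H\"older equivalence this is comparable to the $(a/b)$-power reciprocal of the $\theta_a$-diameter of $\mathcal J_\partial(x)$; Proposition~\ref{th3.3} bounds the latter by $Ce^{-a|x|}$, yielding $|\Phi(x)|_\rho = O(e^{-b|x|})$. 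To obtain \eqref{eq4.11}, suppose $d_h(x,y) > k$ with $x,y \in X_n$. By Proposition~\ref{th3.2} the cells $\mathcal J_\partial(x)$ and $\mathcal J_\partial(y)$ are $\theta_a$-separated by at least $\gamma' e^{-an}$, and transferring through the H\"older equivalence gives a lower bound on $\dist_\rho(\kappa(\mathcal J_\partial(x)), \kappa(\mathcal J_\partial(y))) = \dist_\rho(\Phi(x), \Phi(y))$ of the order $e^{-bn}$, which is \eqref{eq4.11}.

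\textbf{Main obstacle:} The delicate point I anticipate is the lower bound on $\dist_\rho(\Phi(x),\Phi(y))$ in the (i) $\Rightarrow$ (ii) direction. The H\"older equivalence controls distances between \emph{points} $\kappa(\xi),\kappa(\eta)$, but $\dist_\rho$ between the \emph{sets} $\Phi(x),\Phi(y)$ is an infimum over all pairs of boundary points in the respective cells; I must ensure that the $\theta_a$-separation of the cells (Proposition~\ref{th3.2}) genuinely transfers to a $\rho$-separation of the images, using that the minimizing pair of points realizing $\dist_\rho$ lies in $K$ and hence is the $\kappa$-image of a boundary pair. Saturation is essential here, since it guarantees $\kappa(\mathcal J_\partial(x)) = \Phi(x)$ exactly rather than merely $\widetilde\Phi(x) \subseteq \Phi(x)$; without it the set $\Phi(x)$ could contain points not captured by the boundary, and the separation estimate would only bound $\dist_\rho(\widetilde\Phi(x),\widetilde\Phi(y))$, which is too weak.
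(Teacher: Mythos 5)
Your proposal is correct and follows essentially the same route as the paper's proof: for (ii) $\Rightarrow$ (i) you establish $(m,k)$-departing via the same chain-of-intersecting-cells estimate (the paper states it contrapositively, you by contradiction), then get the upper H\"older bound from Corollary \ref{th4.7} and the lower bound from the ray argument with $n=|{\bf x}\vee{\bf y}|_k$ and Proposition \ref{th3.1}; for (i) $\Rightarrow$ (ii) you use saturation to get $\kappa(\mathcal J_\partial(x))=\Phi(x)$ and transfer Propositions \ref{th3.2} and \ref{th3.3} through the H\"older equivalence, exactly as the paper does. Your closing observation about why saturation is essential matches the paper's use of equality in \eqref{eq4.4}.
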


\begin{proof}
For (i) $\Rightarrow$ (ii), since $(X,E^{(\infty)})$ is expansive and $(m,k)$-departing, it is hyperbolic (Theorem \ref{th2.11}). By Proposition \ref{th4.6}, it is an admissible graph, and the bijection $\kappa: \partial X \to K$ is well-defined. Using \eqref{eq4.4} and \eqref{eq4.10}, we have
$$
|\Phi(x)|_\rho = |\kappa(\mathcal J_\partial(x))|_\rho \leq C_1 |\mathcal J_\partial(x)|_{\theta_a}^{b/a} \leq C_1 e^{-b|x|}
$$
for all $x \in X$. Therefore $\Phi$ is of exponential type-$(b)$ under $\rho$.
Moreover, for $x,y \in X_n$ with $d_h(x,y) > k$, it follows from \eqref{eq4.4}, \eqref{eq4.10} and Proposition \ref{th3.2} that
\begin{align*}
{\rm dist}_\rho(\Phi(x),\Phi(y)) &= {\rm dist}_\rho(\kappa(\mathcal J_\partial(x)),\kappa(\mathcal J_\partial(y))) \\
&\geq C_2 \,{\rm dist}_{\theta_a}(\mathcal J_\partial(x),\mathcal J_\partial(y))^{b/a} > C_2 \gamma_0 e^{-bn}.
\end{align*}
 This proves \eqref{eq4.11} with $\gamma = C_2 \gamma_0$.

\vspace{1mm}

For (ii) $\Rightarrow$ (i), the proof is similar to the one of Theorem \ref{th4.5} on the $AI_b$-graph.
Let $\delta_0 = \sup_{z \in X} e^{b|z|}|\Phi(z)|_\rho$, which is finite as $\Phi$ is of exponential type-$(b)$. Let $u \in \mathcal J_m(x)$ and $v \in \mathcal J_m(y)$ with $d_h(u,v)=\ell<2k$. Then $\Phi(u)$ and $\Phi(v)$ are joined by a chain $\{\Phi(u_j)\}_{j=0}^\ell$ with $u_0=u$ and $u_\ell=v$, in which $|u_j| = |u| = |x|+m$ and $\Phi(u_{j-1}) \cap \Phi(u_j) \neq \emptyset$ for all $j \in \{1,2,\cdots,\ell\}$. Therefore,
$$
{\rm dist}_\rho(\Phi(x),\Phi(y)) \leq {\rm dist}_\rho(\Phi(u),\Phi(v)) \leq {\sum}_{j=1}^{\ell-1} |\Phi(u_j)|_\rho \leq (2k-1)\delta_0 e^{-b(|x|+m)} \leq \gamma e^{-b|x|},
$$
where the integer $m>0$ is chosen to satisfy the last inequality, i.e., $(2k-1)\delta_0 e^{-bm} \leq \gamma$. By \eqref{eq4.11}, we have $d_h(x,y) \leq k$, and this proves that $(X,E^{(\infty)})$ is $(m,k)$-departing.

\vspace{1mm}

To prove \eqref{eq4.10}, for $\xi \neq \eta \in \partial X$, we take ${\bf x},{\bf y} \in \mathcal R_v$ that converge to $\xi,\eta$ respectively with $(\xi|\eta) = ({\bf x}|{\bf y})$, and let $n=|{\bf x}\vee {\bf y}|_k$ as in \eqref{eq3.3}. It follows from Proposition \ref{th3.1} that $|(\xi|\eta)-n| = |({\bf x}|{\bf y})-n| \leq D_0$ for some $D_0 = D_0(m,k) > 0$. Using $\kappa(\xi) \in \Phi (x_{n+1})$, $\kappa(\eta) \in \Phi (y_{n+1})$, $d_h(x_{n+1},y_{n+1}) > k$ and \eqref{eq4.11}, we get the lower bound of \eqref{eq4.10} by
\begin{align*}
\rho(\kappa(\xi),\kappa(\eta)) &\geq {\rm dist}_\rho (\Phi(x_{n+1}),\Phi(y_{n+1})) \\
&> \gamma e^{-b(n+1)} \geq \gamma e^{-b(D_0+1)}e^{-b(\xi|\eta)} \geq c \theta_a(\xi,\eta)^{b/a};
\end{align*}
the upper bound is proved by Corollary \ref{th4.7}. This completes the proof.
\end{proof}

\medskip

\noindent {\bf Remark.}
Letting $k=1$ in \eqref{eq4.11}, the condition becomes
\begin{equation} \label{eq4.13}
|x|=|y|\quad \hbox{ and }\quad \Phi(x) \cap \Phi(y) = \emptyset \quad \Rightarrow  \quad {\rm dist}  (\Phi(x),\Phi(y)) > \gamma e^{-b|x|}.
\end{equation}
This is the {\it condition (H)} in \cite{LW1}  in the setup where $K$ is a self-similar set (see also Appendix, Theorem \ref{th7.1}); the authors proved that this condition is sufficient for the H\"older equivalence between $\partial X$ and $K$. Here a necessity part is also provided, together with an extra relation with the $(m,k)$-departing property (or hyperbolicity) of these graphs.

\bigskip

\section{Separation conditions and doubling metrics}
\label{sec:5}

In this section,  we aim to give some sufficient conditions for $AI_\infty$-graphs to be hyperbolic, and characterize the bounded degree property of $AI_b$- and $AI_\infty$-graphs;
both involve some separation conditions on index maps and the doubling property of attractors.

\medskip

Let $\Phi$ be an index map on a vertical graph $(X, E_v)$ over a complete metric space $(M, \rho)$ with attractor $K$.  We call a map $\iota: X \to K$ a {\it projection}  (with respect to $\Phi$) if it satisfies $\iota (x) \in K_x$($:=\Phi(x) \cap K$) for all $ x\in X$. When there is no confusion, we shall denote the ball $B_\rho(\xi,r)$ by $B(\xi,r)$ for simplicity.

\medskip

\begin{definition} \label{de5.1}
Let $\Phi$ be an index map with attractor $K$.
 For $b \in (0, \infty)$, we say that $\Phi$ (or $\{K_x\}_{x\in K}$) satisfies
\begin{enumerate}[(i)]
\item  {\rm condition ($S_{b}$)}  if for any $c>0$, there is a constant $\bar\ell = \bar\ell(c)$ such that
\begin{equation} \label{eq5.1} 
\#\{x \in X_n: K_x \cap F \neq \emptyset\} \leq \bar\ell, \quad \forall \ n \geq 0 \ \hbox{and} \ F \subset M \ \hbox{with} \ |F|_\rho < ce^{-bn};
\end{equation}
\item  {\rm condition ($B_{b}$)}  if there exist a projection $\iota: X \to K$ and $c_0 \in (0,\infty)$ such that
\begin{equation} \label{eq5.2} 
B(\iota(x), c_0e^{-b|x|}) \cap K \subset K_x, \qquad \forall\ x \in X.
\end{equation}
\end{enumerate}
\end{definition}

We remark that condition $(S_b)$ is motivated from the open set condition on self-similar sets (see \eqref{eq7.3} in Appendix, and condition $(S_b')$ in Proposition \ref{th5.3}).
To study the above conditions, we need a preliminary result on doubling metric spaces (Lemma \ref{th5.2}). For a subset $F \subset M$ and $r > 0$, define the {\it $r$-covering number} of $F$ by
\begin{equation} \label{eq5.3}
N_r^c(F) := \inf\{\# \Xi:\  \Xi \subset F \hbox{ and } F \subset {\bigcup}_{\xi \in \Xi} \,B(\xi,r)\};
\end{equation}
$F$ is called {\it totally bounded} if the $r$-covering number is finite for all $r>0$.
We also define the {\it $r$-packing number} of $F$ to be
\begin{align*} 
N^p_r(F) := \sup\{\# \Xi: \ \Xi \subset F \hbox{ and } B(\xi,r) \cap B(\eta,r) = \emptyset, \ \forall\ \xi,\eta \in \Xi,\ \xi \neq \eta\},
\end{align*}
and the
{\it $r$-separating number} of $F$ to be
$$
N_r^s(F):= \sup\{\# \Xi: \Xi \subset F \hbox{ and } \rho(\xi,\eta) \geq r, \ \forall\ \xi,\eta \in \Xi,\ \xi \neq \eta\}.
$$
Recall that the metric space $(M,\rho)$ is doubling (Definition \ref{de3.5}) if and only if
$$
\sup\{N^c_r(B(\xi,2r)): \ \xi \in M,\, r>0\}<\infty.
$$

\begin{lemma} \label{th5.2}
Let $(M,\rho)$ be a metric space. Then the inequalities
\begin{equation} \label{eq5.5}
N_{2r}^s(F) \leq N_r^p(F) \leq N_r^c(F) \leq N_r^s(F)
\end{equation}
hold for all totally bounded subset $F \subset M$ and $r>0$. As a consequence, the following assertions are equivalent.
\begin{enumerate}[(i)]
\item $(M,\rho)$ is doubling (Definition \ref{de3.5}).

\item For some ($\Leftrightarrow$ any) $t>1$, $\widehat{N}^c(t):=\sup\{N^c_r(B(\xi,tr)): \xi \in M,\,r>0\}<\infty$.

\item For some ($\Leftrightarrow$ any) $t>1$, $\widehat{N}^p(t):=\sup\{N^p_r(B(\xi,tr)): \xi \in M,\,r>0\}<\infty$.

\item For some ($\Leftrightarrow$ any) $t>1$, $\widehat{N}^s(t):=\sup\{N^s_r(B(\xi,tr)): \xi \in M,\,r>0\}<\infty$.
\end{enumerate}
\end{lemma}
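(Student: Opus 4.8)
The plan is to prove the inequalities \eqref{eq5.5} by elementary comparisons and then to collapse all four statements onto a single ``finiteness at one scale forces finiteness at every scale'' iteration.

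First I would settle \eqref{eq5.5}. For $N_{2r}^s(F)\le N_r^p(F)$, if $\Xi\subset F$ is $2r$-separated then for distinct $\xi,\eta\in\Xi$ a common point of $B(\xi,r)$ and $B(\eta,r)$ would force $\rho(\xi,\eta)<2r$; hence $\Xi$ is an $r$-packing. For $N_r^p(F)\le N_r^c(F)$, given an $r$-packing $\Xi$ and an optimal $r$-cover, send each $\xi\in\Xi$ to a covering ball $B(\zeta,r)\ni\xi$; if $\xi,\eta$ shared such a ball then $\zeta\in B(\xi,r)\cap B(\eta,r)$, contradicting disjointness, so the map is injective. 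For $N_r^c(F)\le N_r^s(F)$, a maximal $r$-separated subset of $F$ (of cardinality at most $N_r^s(F)$, using total boundedness) is automatically an $r$-cover, since any point it failed to cover could be adjoined. Specialising to $F=B(\xi,tr)$, taking suprema over $\xi$ and $r$, and substituting $\sigma=2r$ in the first term yields $\widehat N^s(t/2)\le\widehat N^p(t)\le\widehat N^c(t)\le\widehat N^s(t)$; the same comparisons also give $\widehat N^c(t/2)\le\widehat N^p(t)$ and, from $N_r^s(F)\le N_{r/2}^p(F)\le N_{r/2}^c(F)$, the bound $\widehat N^s(t)\le\widehat N^c(2t)$.

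The engine of the proof is the observation that $\widehat N^c(t_0)<\infty$ for a single $t_0>1$ propagates to all scales. Indeed $\widehat N^c(t_0)=A$ means every ball of radius $t_0 s$ is covered by $A$ balls of radius $s$; iterating, every ball of radius $t_0^{\,j}s$ is covered by $A^{\,j}$ balls of radius $s$, and since $t_0^{\,j}\to\infty$ and $\widehat N^c$ is nondecreasing in $t$ we get $\widehat N^c(t)\le A^{\lceil\log_{t_0}t\rceil}<\infty$ for every $t>1$. The identical argument with a maximal separated net in place of a cover gives the same some-iff-any dichotomy for $\widehat N^s$. Since doubling is by definition $\widehat N^c(2)<\infty$, this already delivers (i)$\Leftrightarrow$(ii), and combined with $\widehat N^c(t)\le\widehat N^s(t)\le\widehat N^c(2t)$ it delivers (ii)$\Leftrightarrow$(iv). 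For the packing statement (iii) I would sandwich $\widehat N^p$ between covering numbers via $\widehat N^c(t/2)\le\widehat N^p(t)\le\widehat N^c(t)$ and feed this into the covering dichotomy.

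The part I expect to demand the most care is the persistent factor-$2$ scale shift: an $r$-separated set is only an $(r/2)$-packing, so each passage between $\widehat N^p$, $\widehat N^s$ and $\widehat N^c$ costs a factor $2$ in the ratio. This is harmless once one is in the ``for all $t$'' regime, but it means that raw finiteness of $\widehat N^p$ at a small ratio $t_0\in(1,2]$ only yields covering finiteness at $t_0/2\le 1$, which is vacuous. The remedy is to route the packing-at-small-scale case through a Vitali-type covering argument (a maximal $r$-packing of $B(\xi,t_0 r)$ produces, via the doubled balls, a cover of $B(\xi,t_0 r)$) and then to absorb the resulting scale shift using the already-established some-iff-any for $\widehat N^c$ and $\widehat N^s$; verifying that this genuinely closes the loop for packing, uniformly over $t_0>1$, is the delicate step.
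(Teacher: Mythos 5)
Your treatment of \eqref{eq5.5} is correct and is exactly what the paper means by ``straightforward from the definitions'': $2r$-separation forces disjoint open $r$-balls; packing points inject into the balls of any $r$-cover; and a maximal $r$-separated subset of $F$ (finite by total boundedness) is automatically an $r$-cover. Your iteration ``engine'' for covering numbers is also essentially right (modulo the benign re-centering/factor-$2$ adjustments you acknowledge, which are needed because the paper's definition \eqref{eq5.3} requires the centers $\Xi \subset F$), and it is precisely the ingredient that the paper's one-line proof (``the equivalence of (i)--(iv) follows directly from \eqref{eq5.5}'') quietly presupposes: doubling is finiteness of $\widehat{N}^c$ at the single ratio $t=2$, and passing to all ratios $t>1$ is an iteration, not a consequence of \eqref{eq5.5} alone. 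Combined with $\widehat{N}^c(t)\le\widehat{N}^s(t)\le\widehat{N}^c(2t)$ this correctly settles (i) $\Leftrightarrow$ (ii) $\Leftrightarrow$ (iv), and it gives (i) $\Rightarrow$ (iii) for every $t$ via $\widehat{N}^p(t)\le\widehat{N}^c(t)$; these are, incidentally, the only directions the paper ever uses (Proposition \ref{th5.3}, Theorems \ref{th5.4} and \ref{th5.5} invoke doubling $\Rightarrow$ finiteness of $\widehat{N}^c$, $\widehat{N}^p$, $\widehat{N}^s$ at prescribed ratios).

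The gap is exactly where you flagged it, and your proposed remedy does not close it. For (iii) $\Rightarrow$ (i) with a witness $t_0\in(1,2]$: a maximal $r$-packing $\Xi$ of $B(\xi,t_0r)$ does give, via the doubled balls $B(\eta,2r)$, $\eta \in \Xi$, a cover of $B(\xi,t_0r)$; but this yields precisely $N^c_{2r}(B(\xi,t_0r))\le \widehat{N}^p(t_0)$, i.e.\ $\widehat{N}^c(t_0/2)\le\widehat{N}^p(t_0)$ --- the same vacuous bound at ratio $t_0/2\le 1$ that you already extracted from \eqref{eq5.5}, so the Vitali step buys nothing. The obstruction is intrinsic to the definitions: in a general (non-geodesic) metric space, disjointness of $B(x,r)$ and $B(y,r)$ forces only $\rho(x,y)\ge r$, not $\ge 2r$, so every passage from $N^p$ to $N^c$ or $N^s$ costs a factor $2$ in the ratio, and no composition of the comparisons in \eqref{eq5.5} can promote a packing bound at ratio $t_0\le 2$ to a covering or separation bound at any ratio $>1$; the natural clustering/pigeonhole bootstraps likewise propagate packing bounds only from larger ratios to smaller ones when $t_0\le 2$. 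So your argument, as written, proves the lemma except for the implication ``$\widehat{N}^p(t_0)<\infty$ for some $t_0\in(1,2]$ $\Rightarrow$ doubling,'' and it is not clear that this case can be handled by these comparisons at all (the paper's omitted proof is silent on the very same point, and it is never needed elsewhere in the paper). A complete write-up should either supply a genuinely different argument for that case or restrict the parenthetical ``some $\Leftrightarrow$ any'' in (iii) to $t>2$.
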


The inequality \eqref {eq5.5} is straightforward by using the definitions, and the equivalence of (i)--(iv) follows directly from \eqref {eq5.5}. We omit the detail.

\medskip

\begin{proposition} \label{th5.3}
Let $\Phi$ be an index map with attractor $K$, and is of exponential type-$(b)$.
Suppose $(K, \rho)$ is doubling, then the following conditions are equivalent:
\begin{enumerate}[(i)]
\item condition $(S_b)$;

\item condition $(S_b')$:
there exist $c_1 >0$ and $\ell_1>0$ such that
\begin{equation} \label{eq5.6}
\# \{x \in X_n: K_x \cap  B(\xi,c_1 e^{-bn})\not = \emptyset\} \leq \ell_1, \qquad \forall\ n\geq 0, \ \xi \in K;
\end{equation}

\item condition $(S_b'')$:
there exist a projection $\iota: X \to K$, $c_2>0$ and $\ell_2>0$ such that
\begin{equation} \label{eq5.7}
\# \{x \in X_n: \rho(\iota(x),\iota(y)) < c_2 e^{-bn}\} \leq \ell_2, \qquad \forall\ n\geq 0, \ y \in X_n.
\end{equation}
\end{enumerate}
\end{proposition}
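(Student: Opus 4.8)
The plan is to prove the cyclic chain $(S_b) \Rightarrow (S_b') \Rightarrow (S_b'') \Rightarrow (S_b)$, and to observe that the doubling hypothesis on $(K,\rho)$ and the exponential type-$(b)$ assumption are only invoked to close the cycle; the first two implications are purely set-theoretic.

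\medskip

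For $(S_b) \Rightarrow (S_b')$, I would fix any $c_1 > 0$ and note that $|B(\xi, c_1 e^{-bn})|_\rho \leq 2 c_1 e^{-bn} < 3 c_1 e^{-bn}$, so that applying $(S_b)$ with $c = 3c_1$ and $F = B(\xi, c_1 e^{-bn})$ yields \eqref{eq5.6} with $\ell_1 = \bar\ell(3c_1)$. For $(S_b') \Rightarrow (S_b'')$, fix any projection $\iota$; if $\rho(\iota(x), \iota(y)) < c_1 e^{-bn}$ then $\iota(x) \in K_x \cap B(\iota(y), c_1 e^{-bn})$ with $\iota(y) \in K$, so each such $x$ is counted in \eqref{eq5.6} centered at $\xi = \iota(y)$, giving \eqref{eq5.7} with $c_2 = c_1$, $\ell_2 = \ell_1$ and the same $\iota$.

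\medskip

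The substantive implication is $(S_b'') \Rightarrow (S_b)$, and this is where I expect the work to concentrate. Fix $c > 0$ and $F \subset M$ with $|F|_\rho < c e^{-bn}$, and set $A := \{x \in X_n : K_x \cap F \neq \emptyset\}$, which we may assume nonempty. Write $\delta_0 := \sup_{z \in X} e^{b|z|}|\Phi(z)|_\rho < \infty$, finite because $\Phi$ is of exponential type-$(b)$. For each $x \in A$ pick $p_x \in K_x \cap F$; since $\iota(x), p_x \in K_x \subset \Phi(x)$ we have $\rho(\iota(x), p_x) \leq |K_x|_\rho \leq \delta_0 e^{-bn}$. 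Fixing one $x_0 \in A$, the triangle inequality then places every projection $\iota(x)$, $x \in A$, in the ball $B(p_{x_0}, R e^{-bn})$ with $R := \delta_0 + c$ and center $p_{x_0} \in K$. By Lemma \ref{th5.2} applied to the doubling space $(K, \rho)$, the set $\{\iota(x) : x \in A\} \subset K$ admits a maximal $c_2 e^{-bn}$-separated subset $\{\iota(y_1), \dots, \iota(y_M)\}$ (with each $y_j \in A$) of cardinality $M \leq \widehat N^s(R/c_2) =: N < \infty$, a bound depending only on $R/c_2$ and the doubling constant of $K$. By maximality, every $x \in A$ satisfies $\rho(\iota(x), \iota(y_j)) < c_2 e^{-bn}$ for some $j$, so $A \subset \bigcup_{j=1}^M \{x \in X_n : \rho(\iota(x), \iota(y_j)) < c_2 e^{-bn}\}$; applying $(S_b'')$ to each $y_j$ bounds $\# A \leq M \ell_2 \leq N \ell_2$, and setting $\bar\ell(c) := N \ell_2$ establishes \eqref{eq5.1}.

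\medskip

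The main obstacle is precisely this last step, and its delicacy is twofold. First, the projection $\iota$ need not be injective, and $(S_b'')$ only controls clusters around centers lying in the image $\iota(X_n)$, not around arbitrary points of $K$; this is circumvented by extracting the separated family from within $\{\iota(x) : x \in A\}$ itself, so that each center is a bona fide $\iota(y_j)$. Second, the hypotheses furnish information about the cells $K_x$, whereas $(S_b'')$ speaks of the representatives $\iota(x)$; the exponential-type bound $|K_x|_\rho \leq \delta_0 e^{-bn}$ is exactly what transfers the geometry between them. The elementary comparisons \eqref{eq5.5} of Lemma \ref{th5.2} then convert the doubling property into the finite bound $N$ on the number of buckets, completing the count.
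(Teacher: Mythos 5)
Your proof is correct, and the substantive direction is handled by a genuinely different route than the paper's. The paper never proves $(S_b'') \Rightarrow (S_b)$ in one step: it closes the equivalence by proving (ii) $\Rightarrow$ (i) and (iii) $\Rightarrow$ (ii) separately. For (ii) $\Rightarrow$ (i), it covers $B(\xi, ce^{-bn}) \cap K$ (with $\xi \in K \cap F$) by $N_0 = \widehat{N}^c(c/c_1)$ balls of radius $c_1 e^{-bn}$ using the covering-number form of Lemma \ref{th5.2}, then applies \eqref{eq5.6} to each center. For (iii) $\Rightarrow$ (ii), it forms the graph on $X_n(\xi) = \{x \in X_n : K_x \cap B(\xi, c_1e^{-bn}) \neq \emptyset\}$ whose edges are pairs with $\rho(\iota(x),\iota(y)) < c_2 e^{-bn}$, observes that \eqref{eq5.7} bounds the maximal degree by $\ell_2 - 1$, invokes Brooks' theorem to color $X_n(\xi)$ with at most $\ell_2$ colors, and bounds each color class --- which is a $c_2e^{-bn}$-separated point set inside $B(\xi,(c_1+\delta_0)e^{-bn})$, the map $\iota$ being injective on each class --- by the separating number $\widehat{N}^s((c_1+\delta_0)/c_2)$. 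Your single-step argument replaces all of this by a greedy extraction of a maximal $c_2e^{-bn}$-separated subset of $\{\iota(x) : x \in A\}$, using only the separating-number part of Lemma \ref{th5.2}; your two key observations --- that the cluster centers must be chosen from the image of $\iota$ so that \eqref{eq5.7} is actually applicable, and that \eqref{eq5.7} counts indices rather than points so non-injectivity of $\iota$ is harmless --- are exactly what the paper's coloring argument is engineered to handle, and your maximality trick disposes of them without any chromatic-number machinery. What each buys: the paper's modular route isolates (ii) $\Rightarrow$ (i) as a standalone implication needing only doubling (no exponential type), while yours is shorter, more elementary, and concentrates all hypotheses in one transparent packing count.
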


\begin{proof}
(i) $\Rightarrow$ (ii) $\Rightarrow$ (iii) is obvious. We need only show (ii) $\Rightarrow$ (i) and (iii) $\Rightarrow$ (ii).

\vspace{1mm}

(ii) $\Rightarrow$ (i): For any $c>0$ and $F \subset M$ with $|F|_\rho < c e^{-bn}$, we will show that \eqref{eq5.1} holds for some $\bar{\ell}$. Without loss of generality, we assume that $c > c_1$ and $K \cap F \not= \emptyset$. Fix a point $\xi \in K \cap F$, and denote the open ball $B(\xi, c e^{-bn}) \cap K$ in $(K,\rho)$ by $B_K$. Since $(K, \rho)$ has doubling property, by Lemma \ref{th5.2}(ii), $B_K$ can be covered by a union of $N_0 := \widehat{N}^c(\frac{c}{c_1})$ open balls $B(\xi_i, c_1 e^{-bn}) \cap K$ in $(K,\rho)$, where $\xi_i \in K$, $i =1, \cdots, N_0$, i.e.,
\[
K \cap F \subset B_K \subset {\bigcup}_{i=1}^{N_0}  \Big(K \cap B(\xi_i, c_1 e^{-bn})\Big).
\]
It follows from \eqref{eq5.6} that
$$
\# \{ x \in X_n: \ K_x \cap F \not= \emptyset \} \le {\sum}_{i=1}^{N_0} \,\# \{ x\in X_n: \, K_x \cap B(\xi_i, c_1 e^{-bn}) \not= \emptyset\} \le \ell_1 N_0.
$$
We see that \eqref{eq5.1} holds for the constant $\bar{\ell} = \ell_1 N_0$.

\vspace{1mm}

(iii) $\Rightarrow$ (ii): For $n \geq 0$ and $\xi \in K$, denote
\[
X_n(\xi) := \{ x\in X_n: \, K_x \cap B(\xi, c_1 e^{-bn}) \not= \emptyset \},
\]
on which we define an edge set $\widetilde{E} = \{(x,y) \in X_n(\xi) \times X_n(\xi) \setminus \Delta: \rho(\iota(x),\iota(y)) < c_2 e^{-bn}\}$. By \eqref{eq5.7}, we see that the maximal degree in the graph $(X_n(\xi), \widetilde{E})$ does not exceed  ($\ell_2 -1$). Applying Brooks' Theorem on the chromatic number (cf.~\cite[Theorem 3.1]{LiW}), there exists a coloring map $\mathcal K: X_n(\xi) \to \Sigma$ with $\# \Sigma \leq \ell_2$ such that $\mathcal K(x) \neq \mathcal K(y)$ if $(x,y) \in \widetilde{E}$.

\vspace{1mm}

Since $\Phi$ is of exponential type-$(b)$, there exists a constant $\delta_0 >0$ such that $|\Phi(x)|_\rho \le \delta_0 e^{-b|x|}$ for all $x \in X$. Consider the discrete sets
\[
\mathcal M^{(i)} := \{\iota(x): \, x \in X_n(\xi),\ \mathcal K(x) = i\}, \qquad i \in \Sigma.
\]
Then $\rho(\iota(x),\iota(y)) \geq c_2 e^{-bn}$ for $x \neq y \in \mathcal M^{(i)}$. By the triangle inequality, it is easy to see that $\mathcal M^{(i)} \subset B(\xi, (c_1 + \delta_0) e^{-bn})$ holds for all $i \in \Sigma$.
Using Lemma \ref{th5.2}(iii), we have
$$
\# X_n(\xi) = {\sum}_{i \in \Sigma} \, \# \mathcal M^{(i)} \leq \ell_2 N^s_{c_2 e^{-bn}}(B(\xi, (c_1 + \delta_0) e^{-bn}) \cap K) \leq \ell_2\widehat{N}^s((c_1 + \delta_0) / c_2)<\infty.
$$
This completes the proof by letting $\ell_1 = \ell_2\widehat{N}^s((c_1 + \delta_0) / c_2)$.
\end{proof}

\medskip

\medskip

The following theorem provides sufficient conditions for $AI_\infty$-graphs to be hyperbolic, and completes the study of $AI_\infty$-graphs in the last section.

\medskip

\begin{theorem} \label{th5.4}
Let $\Phi$ be an index map with  attractor $K$, and is of exponential type-$(b)$. If either
\begin{enumerate}[(i)]
\item condition $(S_{b})$ is satisfied; or

\item the attractor $(K,\rho)$ is doubling, and condition $(B_{b})$ is satisfied,
\end{enumerate}
then the $AI_\infty$-graph is hyperbolic, and hence an admissible graph.
\end{theorem}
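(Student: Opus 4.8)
The plan is to verify, in both cases, that the $AI_\infty$-graph $(X,E^{(\infty)})$ is $(m,k)$-departing for suitable integers $m,k$; since it is expansive (as already observed), hyperbolicity then follows from Theorem \ref{th2.11}, and admissibility is exactly Proposition \ref{th4.6}. Throughout put $n=|x|=|y|$, write $n'=n+m$, and set $\delta_0:=\sup_{z\in X}e^{b|z|}|\Phi(z)|_\rho<\infty$ (finite by exponential type-$(b)$). I would argue the contrapositive of $(m,k)$-departing: given $u\in\mathcal J_m(x)$ and $v\in\mathcal J_m(y)$ with $d_h(u,v)\le 2k$, I must produce $d_h(x,y)\le k$. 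Fix a horizontal geodesic $u=u_0,\dots,u_s=v$ in $(X_{n'},E_h^{(\infty)})$ with $s\le 2k$, choose $p_j\in K_{u_j}$, and note that since consecutive cells meet, $\rho(p_{j-1},p_j)\le 2\delta_0 e^{-bn'}$, so all $p_j$ lie in a set $F$ of diameter $\le 8k\delta_0 e^{-bn'}$. Letting $x_j\in X_n$ be the ancestor of $u_j$ (so $x_0=x$, $x_s=y$), the inclusion $\Phi(u_{j-1})\cap\Phi(u_j)\subseteq\Phi(x_{j-1})\cap\Phi(x_j)$ shows consecutive ancestors are equal or horizontally adjacent; hence the distinct ancestors form the vertex set $V$ of a connected subgraph of $(X_n,E_h^{(\infty)})$ joining $x$ to $y$, with each $K_{x_j}\ni p_j$ and $|K_{x_j}|_\rho\le\delta_0 e^{-bn}$.

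The crux is to replace the trivial estimate $d_h(x,y)\le s\le 2k$ by a bound independent of $n$. For this I would invoke the elementary fact that every connected graph $G$ satisfies $\diam(G)\le 2\alpha(G)-1$, where $\alpha$ is the independence number: a diametral geodesic is an induced path, so its alternate vertices form an independent set. Applied to $V$ this yields $d_h(x,y)\le 2\alpha(V)-1$, so everything reduces to bounding $\alpha(V)$ — the maximal number of pairwise non-adjacent, hence pairwise $\Phi$-disjoint (so $K$-disjoint), cells among the $K_{x_j}$ — by a level-independent constant $N_1$. Then $k:=2N_1-1$ works provided $m$ is chosen so large that $\diam(F)\le e^{-bn}$ (e.g. $8k\delta_0 e^{-bm}\le 1$), which simultaneously forces every reference point $\iota(x_j)$ to lie within $(1+\delta_0)e^{-bn}$ of $p_0$. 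There is no circularity: the constant $N_1$ depends only on the separation data, $k$ is then fixed, and only afterwards is $m$ selected.

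It is here that each hypothesis enters, supplying the bound on $\alpha(V)$ at the coarse scale $e^{-bn}$. Under (i), any independent family in $V$ is a collection of pairwise $K$-disjoint cells all meeting the small set $F$, so condition $(S_b)$ bounds its cardinality by $\bar\ell(2)=:N_1$. Under (ii), I would first record the decisive reformulation of $(B_b)$: if $K_{x'}\cap K_{x''}=\emptyset$ then $\rho(\iota(x'),\iota(x''))\ge c_0 e^{-bn}$, for otherwise $\iota(x')\in B(\iota(x''),c_0e^{-bn})\cap K\subseteq K_{x''}$ would make the cells meet. Thus an independent family yields $c_0e^{-bn}$-separated points inside $B(p_0,(1+\delta_0)e^{-bn})\cap K$, whose number is at most $\widehat N^s\!\big((1+\delta_0)/c_0\big)=:N_1<\infty$ by the doubling of $K$ and Lemma \ref{th5.2}. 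In either case the $(m,k)$-departing property holds, and the conclusion follows from Theorem \ref{th2.11} and Proposition \ref{th4.6}.

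I expect the main obstacle to be exactly this passage from $d_h(x,y)\le 2k$ to a uniform constant bound. The tempting move — a packing estimate \emph{along} the chain $u_0,\dots,u_s$ — is doomed, since the spatial span of a horizontal geodesic grows with its length and the resulting count is merely polynomial in $s$, giving nothing uniform. The resolution is to count cells at the \emph{ancestor} scale $e^{-bn}$, where the image $F$ of the whole chain is tiny and the separation hypotheses bite at a single scale; combined with the graph-theoretic $\diam\le 2\alpha-1$ inequality this converts a scale-free separation bound into a genuine geodesic bound. For case (ii) the subtle step is the translation of the inner-ball condition $(B_b)$ into a lower bound on centre-distances of disjoint cells, which is precisely what renders the doubling property usable here.
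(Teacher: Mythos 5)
Your proposal is correct and is, in substance, the paper's own proof run in the contrapositive direction: the paper assumes hyperbolicity fails, projects an arbitrarily long horizontal geodesic up $m$ levels to an induced horizontal path whose cells are trapped in a ball of the coarse scale $e^{-b(n-m)}$ (after choosing $m$ so large that the chain's span collapses to one coarse cell-size, just as in your choice of $m$), and contradicts exactly your two counts --- condition $(S_b)$ applied to that small set in case (i), and in case (ii) disjoint inner balls at the alternate vertices of the induced path (the same combinatorial fact as your $\diam \le 2\alpha-1$ lemma) against the packing bound of Lemma \ref{th5.2}. The only differences are organizational: you verify the $(m,k)$-departing property directly with explicit constants and use the separating-number reformulation of $(B_b)$, whereas the paper argues by contradiction via criterion (ii) of Theorem \ref{th2.11} and the packing number; both proofs then conclude admissibility from Proposition \ref{th4.6}.
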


\begin{proof}
We prove the hyperbolicity of $(X,E^\infty)$ by using a similar argument as in \cite[Theorem 1.2]{LW3}.  Suppose it is not hyperbolic.  Then by Theorem \ref{th2.11}, for any integer $m>0$, there exists a horizontal geodesic $[x_0,x_1,\cdots,x_{3m}]$ with length $3m$. Clearly $|x_0| =:n > m$, and by the expansive property \eqref{eq2.1}, there is a horizontal path $[y_0,y_1,\cdots,y_\ell]$ in $X_{n-m}$  (see Figure \ref{fig:5}) such that
\begin{enumerate}[(a)]
\item $y_0 \in \mathcal J_{-m}(x_0)$, $y_\ell \in \mathcal J_{-m}(x_{3m})$, and $y_i \in \bigcup_{j=0}^{3m} \mathcal J_{-m}(x_j)$ for all $0<i<\ell$;

\item $(y_i,y_j) \in E_h$ if and only if $|i-j|=1$.
\end{enumerate}

\begin{figure}[ht]
\begin{center}
\includegraphics[scale=0.5]{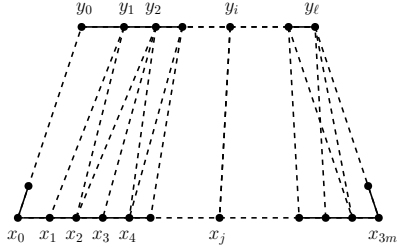}
\caption{Two horizontal paths $[x_j]_j$ and $[y_i]_i$.} \label{fig:5}
\end{center}
\end{figure}

\vspace{1mm}

As $3m = d(x_0, x_{3m}) \leq d(x_0,y_0)+d(y_0,y_\ell)+d(y_\ell,x_{3m}) \leq m+\ell+m$, we have $\ell \geq m$. Let $F:=\bigcup_{i=0}^\ell \Phi(y_i)$. Note that by (a), for any $\eta \in F$, there exists $k \in \{0,1,\ldots,3m\}$ such that both $\Phi(x_k)$ and $\eta$ are contained in some $\Phi(y_i)$. Thus for any projection $\iota: X \to K$,
\begin{align*}
\rho(\iota(x_0), \eta)
&\leq {\sum}_{j=0}^{k-1}\,\rho(\iota(x_j),\iota(x_{j+1})) + |\Phi(y_i)| \\
&\leq 2k\delta_0 e^{-bn} + \delta_0 e^{b(m-n)} \leq \delta_0(6me^{-bm}+1) e^{b(m-n)},
\end{align*}
where $\delta_0 := \sup_{z \in X} e^{b|z|}  |\Phi(z)| < \infty$.
Take $m$ large enough such that $6me^{-bm} < 1$. Then
$$
F = {\bigcup}_{i=0}^\ell \Phi(y_i)\subset B(\iota(x_0),2\delta_0 e^{b(m-n)})=:B.
$$

\vspace{2mm}

(i) If condition (S$_{b}$) is satisfied, by noting that $|F| \leq 4\delta_0 e^{b(m-n)} =: ce^{b(m-n)}$,  we have
$$
\ell+1 \leq \#\big \{y \in X_{n-m}: \Phi(y) \subset F\big \} \leq  \#\big \{y \in X_{n-m}: K_y \cap F \neq \emptyset\big \}\leq \bar\ell(c).
$$
This is impossible since $\ell \geq m$ can be arbitrarily large.

\vspace{2mm}

(ii) If $(K,\rho)$ is doubling and condition ($B_{b}$) holds, then we choose the above $\iota$ to satisfy \eqref{eq5.2}. 
By (b), the ball $B$ contains at least $\lfloor\ell/2\rfloor+1>m/2$ mutually disjoint balls $B(\iota(y_{2i}), c_0e^{b(m-n)})$, $0\leq i \leq \lfloor\ell/2\rfloor$. On the other hand by considering the packing number and the constant of doubling (see Lemma \ref{th5.2}(iii)), we have
$$
N_{e^{b(m-n)}}^p(B \cap K) \leq \widehat{N}^p (2\delta_0 / c_0) < \infty.
$$
As $m$ can be arbitrarily large, this is impossible.
Hence $(X,E^\infty)$ is hyperbolic in either case. By  Proposition \ref{th4.6}, the $AI_\infty$-graph is an admissible graph.
\end{proof}

\bigskip

Recall that for an expansive hyperbolic graph with bounded degree, the hyperbolic boundary possesses the doubling property (Theorem \ref{th3.6}).  With the separation property,  we have a stronger result for $AI_b$-graphs as well as a sufficient condition for $AI_\infty$-graphs.

\medskip

\begin{theorem} \label{th5.5}
Let $\Phi$ be an index map of exponential type-$(b)$. Then the $AI_b$-graph has bounded degree if and only if condition $(S_b)$ is satisfied. As a consequence, $(S_b)$ condition  is sufficient for the $AI_\infty$-graph to have bounded degree.
\end{theorem}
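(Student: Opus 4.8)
The plan is to bound $\deg(x)$ uniformly by splitting it into vertical and horizontal contributions, and conversely to recover $(S_b)$ from bounded degree by passing to ancestors. Throughout I set $\delta_0 := \sup_{z\in X} e^{b|z|}|\Phi(z)|_\rho < \infty$, which is finite since $\Phi$ is of exponential type-$(b)$, and I fix a projection $\iota:X\to K$ (so $\iota(x)\in K_x$, recalling $K_x\neq\emptyset$ because $\mathcal J_\partial(x)\neq\emptyset$). Note that $\deg(x)$ is the number of vertical up- and down-neighbours $\#\mathcal J_1(x)+\#\mathcal J_{-1}(x)$ plus the number of horizontal neighbours, so it suffices to bound each of the three uniformly.

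For the direction $(S_b)\Rightarrow$ bounded degree, I would first control the vertical degree. Any child $y\in\mathcal J_1(x)$ satisfies $\emptyset \neq K_y \subset K_x$, so $K_y$ meets $F:=K_x$, whose diameter is at most $\delta_0 e^{-b|x|} = (\delta_0 e^{b})\, e^{-b(|x|+1)}$; applying $(S_b)$ at level $|x|+1$ with $c=\delta_0e^{b}+1$ bounds $\#\mathcal J_1(x)$, and the symmetric argument at level $|x|-1$ bounds $\#\mathcal J_{-1}(x)$. For the horizontal degree, take $y\in X_n$ with $(x,y)\in E_h^{(b)}$, i.e.\ $\dist_\rho(\Phi(x),\Phi(y))\le\gamma e^{-bn}$; choosing points of $\Phi(x),\Phi(y)$ realizing this distance and comparing them with $\iota(x),\iota(y)$ via $|\Phi(x)|_\rho,|\Phi(y)|_\rho\le\delta_0 e^{-bn}$, the triangle inequality gives $\iota(y)\in K_y\cap B(\iota(x),(2\delta_0+\gamma)e^{-bn})$. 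Taking $F=B(\iota(x),(2\delta_0+\gamma)e^{-bn})$ and $c=2(2\delta_0+\gamma)+1$, condition $(S_b)$ bounds the number of such $y$. Summing the three bounds yields a uniform bound on $\deg(x)$.

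For the converse, assume $t:=\sup_x\deg(x)<\infty$ and fix $c>0$, $n\ge0$ and $F$ with $|F|_\rho<ce^{-bn}$; write $A_F:=\{x\in X_n:K_x\cap F\neq\emptyset\}$. For $x,y\in A_F$ one has $\dist_\rho(\Phi(x),\Phi(y))\le\dist_\rho(K_x,K_y)<ce^{-bn}$. I would choose $r=r(c):=\max\{0,\lceil b^{-1}\log(c/\gamma)\rceil\}$ so that $ce^{-br}\le\gamma$, and, assuming $n\ge r$, pass to ancestors $\hat x\in\mathcal J_{-r}(x)\cap X_{n-r}$. Since $\Phi(x)\subset\Phi(\hat x)$, the above gives $\dist_\rho(\Phi(\hat x),\Phi(\hat y))<ce^{-bn}=(ce^{-br})\,e^{-b(n-r)}\le\gamma e^{-b(n-r)}$, so the distinct ancestors are pairwise $E_h^{(b)}$-adjacent, i.e.\ they form a clique and hence number at most $t+1$. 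As $\#\mathcal J_r(\hat x)\le t^r$, we obtain $\#A_F\le(t+1)t^r$ (the case $n<r$ being covered by $\#X_n\le t^n$), which is the required $\bar\ell(c)$. Finally, since $\Phi(x)\cap\Phi(y)\neq\emptyset$ forces $\dist_\rho(\Phi(x),\Phi(y))=0\le\gamma e^{-bn}$, we have $E^{(\infty)}\subset E^{(b)}$ and thus $\deg_{AI_\infty}(x)\le\deg_{AI_b}(x)$, so the consequence for $AI_\infty$-graphs follows from the sufficiency direction already proved.

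The main obstacle, and the point requiring care, is that the horizontal edges are defined through the full index sets $\Phi(x)$ whereas $(S_b)$ is phrased in terms of the cells $K_x=\Phi(x)\cap K$; bridging the two is exactly what the estimate $|\Phi(x)|_\rho\le\delta_0 e^{-b|x|}$ together with the projection $\iota$ accomplishes, converting closeness of the $\Phi$'s into closeness of points of $K$ (and vice versa) at the cost of an additive $O(e^{-bn})$ error. I emphasize that, pleasantly, neither direction requires $(K,\rho)$ to be doubling: in the converse the clique bound furnished by bounded degree replaces any packing estimate.
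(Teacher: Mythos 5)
Your proof is correct. The sufficiency direction ($(S_b)$ implies bounded degree) is essentially the paper's own argument: split $\deg(y)$ into $\#\mathcal J_1(y)$, $\#\mathcal J_{-1}(y)$ and the horizontal count, and apply $(S_b)$ to three explicit test sets of diameter $O(e^{-bn})$; your choices of test sets ($K_x$ itself and a ball around $\iota(x)$) differ only cosmetically from the paper's neighborhoods of $\Phi(y)$. Your necessity direction, however, is genuinely different and considerably more elementary. The paper argues indirectly: bounded degree gives doubling of $(\partial X,\theta_a)$ by Theorem \ref{th3.6}; the H\"older equivalence of Theorem \ref{th4.5} transports this to doubling of $(K,\rho)$; bounded degree separately yields condition $(S_b'')$ (two projections at distance $<\gamma e^{-bn}$ force a horizontal edge); and Proposition \ref{th5.3} --- whose proof invokes Brooks' theorem on chromatic numbers and separating-number estimates, and which requires the doubling of $K$ --- upgrades $(S_b'')$ to $(S_b)$. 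You instead work directly at a coarser scale: all cells meeting a set $F$ with $|F|_\rho<ce^{-bn}$ have mutual $\Phi$-distances $<ce^{-bn}$, so their ancestors $r=r(c)$ generations up (with $ce^{-br}\le\gamma$) are pairwise $E_h^{(b)}$-adjacent, hence form a clique of size at most $t+1$ in a graph of maximal degree $t$, and each ancestor has at most $t^r$ level-$n$ descendants, giving $\bar\ell(c)=(t+1)t^r$. (The small steps you need --- existence of ancestors along root-geodesics, $\#\mathcal J_r(z)\le t^r$, and $\#X_n\le t^n$ for the case $n<r$ --- all hold in a rooted graph.) Your route is self-contained, avoiding hyperbolicity, the boundary, and any metric doubling hypothesis, and it exposes the equivalence as purely combinatorial. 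What the paper's heavier route buys is the doubling of $(K,\rho)$ as a byproduct, which it needs anyway for Corollary \ref{th5.6} (the characterization of bounded degree via doubling plus $(S_b')$ or $(S_b'')$); with your proof, that corollary would still require Theorem \ref{th3.6} and Proposition \ref{th5.3} as separate inputs.
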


\begin{proof}
It is clear that the last statement follows from the first one, since $E^\infty \subset E^{(b)}$.

\vspace{1mm}

To show the necessity of the first part, by Theorem \ref{th4.5}, the $AI_{b}$-graph is hyperbolic, and the hyperbolic boundary $(\partial X,\theta_a)$ is H\"{o}lder equivalent to the attractor $(K,\rho)$. From Theorem \ref{th3.6} we know that $(\partial X,\theta_a)$ has the doubling property, which is preserved by the H\"older equivalence $\kappa$, and thus $(K,\rho)$ is also doubling.

Next we will show that the index map $\Phi$ satisfies condition $(S_b'')$. We fix an arbitrary projection $\iota$ from $X$ to $K$. For $x,y \in X_n$ with $\rho(\iota(x),\iota(y)) < \gamma e^{-bn}$ (where $\gamma>0$ is the constant in \eqref{eq4.6}, the definition of $E_h^{(b)}$), we have $x \sim_h y$ since
${\rm dist}_\rho(\Phi(x), \Phi(y)) \leq \rho(\iota(x),\iota(y)) < \gamma e^{-bn}$.
Hence for all $y \in X_n$,
$$
\# \{x \in X_n: \rho(\iota(x),\iota(y)) < \gamma e^{-bn}\} \leq \# \{x \in X_n: x \sim_h y\} \leq \deg(y).
$$
As $\ell_2:=\sup_{y \in X} \deg(y) < \infty$, \eqref{eq5.7} holds for $c_2 = \gamma$. Making use of Proposition \ref{th5.3}, we see that condition $(S_b)$ is satisfied.

\vspace{2mm}

For the sufficiency, we calculate the degree of a fixed vertex $y \in X_n$. Set $\delta_0:=\sup_{z \in X} e^{b|z|} |\Phi(z)|$. For $x \in \mathcal J_1(y)$, we see that $K_x \subset \Phi(x) \subset \Phi(y)$. As $|\Phi(y)| \leq \delta_0 e^{-bn} =: ce^{-b(n+1)}$, condition $(S_b)$ implies that
$$
\# \mathcal J_1(y) \leq \# \{x \in X_{n+1}: K_x \cap \Phi(y) \neq \emptyset\} \leq  \bar{\ell}(c).
$$
For $x \in \mathcal J_{-1}(y)$, it follows that
$$
\Phi(y) \subset \Phi(x) \subset F:=\{\xi \in M: \rho(\xi,\Phi(y)) \leq \delta_0 e^{b(1-n)}\}.
$$
Using the triangle inequality, we get $|F| < 3\delta_0 e^{b(1-n)}=: c'e^{b(1-n)}$,  and thus by condition $(S_b)$,
$$
\# \mathcal J_{-1}(y) \leq \# \{x \in X_{n-1}: K_x \cap F \neq \emptyset\} \leq \bar{\ell}(c').
$$
For $x \in X_n$ with $(x,y) \in E_h$, using the triangle inequality we have
$$
K_x \subset \Phi(x) \subset G:=\{\xi \in M: \rho(\xi,\Phi(y)) \leq (\gamma+\delta_0) e^{-bn}\}.
$$
It follows in a similar way that $|G| < 3(\gamma + \delta_0)e^{-bn}=:c''e^{-bn}$, and hence
$$
 \# \{x \in X: (x,y) \in E_h\} \leq \# \{x \in X_n: K_x \cap G \neq \emptyset\} \leq \bar{\ell}(c'').
$$
As $\deg(y) = \#\mathcal J_{-1}(y) + \#\mathcal J_1(y) + \# \{x \in X: (x,y) \in E_h\}$, we conclude from the above estimates that $\deg(y)$ is uniformly bounded by $\bar{\ell}(c)+\bar{\ell}(c')+\bar{\ell}(c'')$ for all $y \in X$, so that the $AI_b$-graph is of bounded degree. We complete the proof.
\end{proof}

 As a consequence of Theorems \ref{th3.6}, \ref{th5.5} and  Proposition \ref{th5.3}, we have

\begin{corollary}  \label{th5.6}
Let $\Phi$ be an index map with attractor $K$, and is of exponential type-$(b)$. Then the $AI_{b}$-graph has bounded degree if and only if the attractor $(K,\rho)$ is doubling, and condition $(S_b'')$ (or $(S_b')$) in Proposition \ref{th5.3} is satisfied.
\end{corollary}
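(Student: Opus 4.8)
The plan is to assemble the three cited results, with the doubling property of $(K,\rho)$ serving as the bridge that lets Proposition \ref{th5.3} interchange conditions $(S_b)$, $(S_b')$ and $(S_b'')$. No new estimates are needed; the content is entirely in sequencing the known statements correctly.

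For the forward implication, I would assume the $AI_b$-graph has bounded degree. By Theorem \ref{th5.5}, this is equivalent to condition $(S_b)$, so $(S_b)$ holds. It then remains to upgrade this to the pair ``$(K,\rho)$ doubling together with $(S_b')$ (or $(S_b'')$)''. To obtain the doubling property of $(K,\rho)$, I would invoke Theorem \ref{th4.5}: the $AI_b$-graph is hyperbolic and $\kappa: (\partial X,\theta_a) \to (K,\rho)$ is a H\"older equivalence. Since the graph is expansive, hyperbolic, and of bounded degree, Theorem \ref{th3.6} gives that $(\partial X,\theta_a)$ is doubling; because the doubling property is a bi-H\"older invariant, transporting it along $\kappa$ shows that $(K,\rho)$ is doubling. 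With $(K,\rho)$ now known to be doubling, Proposition \ref{th5.3} applies and converts $(S_b)$ into $(S_b')$ and $(S_b'')$, completing this direction.

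For the converse, I would assume $(K,\rho)$ is doubling together with $(S_b')$ (equivalently $(S_b'')$). Since $(K,\rho)$ is doubling, Proposition \ref{th5.3} is again available and yields condition $(S_b)$ from either $(S_b')$ or $(S_b'')$. A final application of Theorem \ref{th5.5} then produces the bounded degree property of the $AI_b$-graph.

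No step here is a genuine obstacle; the only point requiring care is the logical order. The doubling hypothesis on $(K,\rho)$ cannot be assumed a priori in the forward direction---it must first be \emph{deduced}, via Theorem \ref{th3.6} and the H\"older equivalence of Theorem \ref{th4.5}, before Proposition \ref{th5.3} is invoked, since that proposition presupposes doubling. Once this sequencing is respected, the equivalence follows by directly chaining the cited statements.
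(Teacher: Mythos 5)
Your proposal is correct and follows exactly the paper's intended argument: the paper derives this corollary by citing Theorems \ref{th3.6}, \ref{th5.5} and Proposition \ref{th5.3}, with the doubling of $(K,\rho)$ obtained in the forward direction precisely as you describe (bounded degree $\Rightarrow$ boundary doubling via Theorem \ref{th3.6}, transported to $K$ by the H\"older equivalence of Theorem \ref{th4.5}), which is in fact the same chain used inside the necessity part of the proof of Theorem \ref{th5.5}. Your emphasis on deducing, rather than assuming, the doubling hypothesis before invoking Proposition \ref{th5.3} matches the paper's logic.
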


\bigskip

\section{Examples and more on IFSs}
\label{sec:6}

We first give an example that the $AI_\infty$-graph is not hyperbolic.

 \begin{example} \label{ex6.1} \hspace{-2mm} {\bf (An anisotropic binary partition of unit square)}
{\rm We define  a binary subdivision scheme to partition  $[0,1]^2$  iteratively into subrectangles. By a Type-I subdivision, we mean dividing a rectangle $J$ horizontally into two equal subrectangles; Type-II means dividing $J$ vertically instead. Let ${\mathcal L} = \{\frac 12 \ell(\ell+ 1): \hbox{integer } \ell >0\} = \{ 1, 3, 6 \cdots\}$. Now let $J_\vartheta = [0,1]^2$. Suppose we have defined $ J_x$ with $x =x_1 \cdots x_n,\ x_i = 0,1$.  If $n+1 \in \mathcal L$, we use Type-I subdivision on  $J_x$ to obtain $J_{x0}$ and $J_{x1}$; otherwise, we use Type-II for the subdivision (see Figure \ref{fig:7}).

\begin{figure}[ht]
\begin{center}
\includegraphics[height=2.0cm]{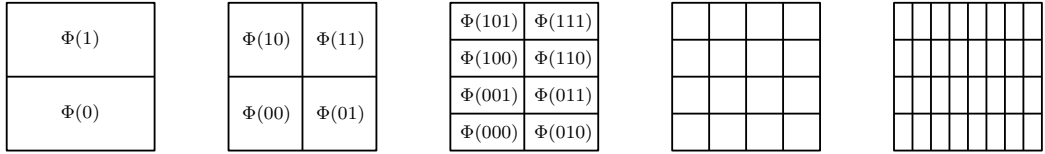}
\caption{The binary partition $\{\Phi(x)\}_{x \in X_n}$, $n=1,2,3,4,5$;  \  $1, 3 \in {\mathcal L}$. } \label{fig:7}
\end{center}
\end{figure}

Let $(X,E_v)$ be the corresponding  {\it binary tree}, on which we define the index map $\Phi(x) = J_x$ and consider the $AI_\infty$-graph $(X,E)$. Fix an integer $\ell>0$. Let $n := \frac{\ell(\ell+1)}{2}$, $x = 0^n$ and
$
y=1010^21 \cdots 0^{\ell-1}1.
$
Then $\Phi(x) = [0,2^{-n+ \ell}] \times [0,2^{-\ell}]$ and $\Phi(y)=[0,2^{-n+\ell}] \times [1-2^{-\ell}, 1]$, which are the rectangles in the lower-left and the upper-left corners of $[0,1]^2$ respectively. It is clear that $d_h(x,y) = 2^\ell-1$. Taking $u = 0^{n+\ell} \in \mathcal J_\ell(x)$ and
$
v= 1010^21 \cdots 0^{\ell-1}10^\ell \in \mathcal J_\ell(y),
$
we can also check that $d_h(u,v) = d_h(x,y) = 2^\ell-1$. This shows that $(X,E)$ is not $(m,k)$-departing whenever $m \leq \ell$ and $k \leq 2^\ell-2$. As $\ell$ can be arbitrary, $(X,E)$ is not hyperbolic by Theorem \ref{th2.11}.
\hfill $\square$}
\end{example}

 \medskip

 Our next example gives an IFS that is homogeneous (the contraction ratios are equal) and satisfies the OSC. The associated  $AI_\infty$-graph is hyperbolic (by Theorem \ref{th5.4}(i)), but the hyperbolic boundary is not H\"older equivalent to the attractor. This shows that (unlike the $AI_b$-graphs) the one-sided H\"older inequality in  Corollary \ref {th4.7} cannot be improved.

\medskip

\begin{example} \label{ex6.2} {\rm
In $M=\mathbb R^2$, let $p_1=(0,0)$, $p_2=(1,0)$, $p_3= (3,0)$, $p_4= (\eta,2)$, $p_5=(0,3)$ and $p_6=(3,3)$, where
$$
\eta :={\sum}_{\ell=0}^\infty (4^{-n_\ell}+4^{-n_\ell-1}) \quad \hbox{with} \quad n_\ell = 1+\frac{\ell(\ell+7)}{2}, \ \ell=0,1,\cdots
$$
Let $S_j(x)=\frac 14 (x+p_j)$, $j \in \Sigma = \{1,2,\cdots,6\}$, and let $K$ be the self-similar set of the IFS $\{S_j\}_{j=1}^6$. We set $X = \Sigma^*$ (the symbolic space) and $\Phi(x) = S_x([0,1]^2)$, $x \in X$. Clearly ${\bigcup}_{j=1}^6 \Phi(j) \subset [0,1]^2$, and every $\Phi(x)$ is a square with side length $4^{-|x|}$ (see Figure \ref{fig:6}). Hence $\Phi$ is an index map of exponential type-$(b)$ with $b=\ln 4$, and it is easy to see that the associated saturated map $\widetilde\Phi(x) = S_x(K)$ for any $x \in X$ (see \eqref {eq4.1}) .

\begin{figure}[ht]
\begin{center}
\includegraphics[scale=0.6]{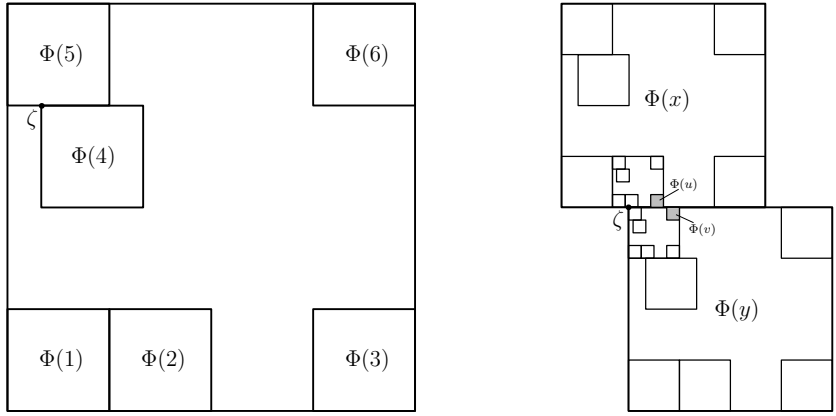}
\caption{The squares in Example \ref{ex6.2}.} \label{fig:6}
\end{center}
\end{figure}

Fix an integer $\ell > 0$. Let $x=5 221^2 221^3 \cdots 221^{\ell+1}$ and $y=4 5^{n_\ell-1}$. Clearly $|x| = |y| = n_\ell$, and $\Phi(y)$ has the same upper-left corner $\zeta=(\eta/4, 3/4)$ as $\Phi(4)$. Moreover, we can calculate that the lower-left corner of $\Phi(x)$ is $(\xi/4, 3/4)$, where
$$
\xi:={\sum}_{k=0}^{\ell-1} (4^{-n_k}+4^{-n_k-1}) = \eta - {\sum}_{k=\ell}^{\infty} (4^{-n_k}+4^{-n_k-1}).
$$

Let $u= x23$ and $v= y56$. Then $|u|=|v|=n_\ell+2$, the lower-right corner of $\Phi(u)$ is $(\xi/4 + 2\cdot 4^{-n_\ell-1}, 3/4)$, and the upper-left corner of $\Phi(v)$ is $(\eta/4+4^{-n_\ell-1}-4^{-n_\ell-2}, 3/4)$. It follows that $S_u(K) \cap S_v(K) = \Phi(u) \cap \Phi(v) = \emptyset$. By checking the two nearest  corners of $\Phi(u)$ and  $\Phi(v)$, we see that ${\rm dist}(\widetilde \Phi(u),\widetilde\Phi(v)) = {\rm dist}(\Phi(u),\Phi(v))$, hence
\begin{align} \label{eq6.1}
{\rm dist}(S_u(K),S_v(K)) &= {\rm dist}(\Phi(u),\Phi(v)) \nonumber \\
&= (\eta/4+4^{-n_\ell-1}-4^{-n_\ell-2}) - (\xi/4 + 2\cdot 4^{-n_\ell-1}) \nonumber \\
&= {\sum}_{k=\ell+1}^\infty (4^{-n_k-1}+4^{-n_k-2}) < 4^{-n_{\ell+1}} = 4^{-\ell-2} e^{-b|u|}.
\end{align}
Consider the $AI_\infty$-graph $(X,E)$ associated to $\widetilde\Phi$, i.e.,
$$
E_h = \{(x,y) \in X \times X \setminus \Delta: |x|=|y|,\ S_x(K) \cap S_y(K) \neq \emptyset\}.
$$
As the IFS $\{S_j\}_{j=1}^6$ satisfies the OSC, so that condition $(S_b)$ in Definition \ref{de5.1} is satisfied, by Theorem \ref {th5.4}, $(X,E)$ is hyperbolic. Since neither $\Phi(u)$ nor $\Phi(v)$ intersects other cells in $\{\Phi(z)\}_{z \in X_{n_\ell+2}}$, we have $d_h(u,v) = \infty$. As $\ell$ can be arbitrarily large, \eqref{eq6.1} implies that the condition \eqref{eq4.11} fails for any $k \geq 1$. By Theorem \ref{th4.8}, the hyperbolic boundary $(\partial X, \theta_a)$ of $(X,E)$ is not H\"older equivalent to $K$.
\hfill $\square$}
\end{example}

\bigskip

We now return to the setup in which $\{S_j\}_{j=1}^N$ is a  contractive IFS on a complete metric space $(M, \rho)$ (see Appendix). In the previous studies, the augmented tree was established according to the geometric sizes of $K_x$ \cite{LW1,LW3}. Within the framework of augmented index graphs (Section \ref{sec:4}), we are able to extend the consideration to some weighted IFSs. In the rest of this section, we are given a vector of weights  ${\bf s} \in (0,1)^N$ on $\{S_j\}_{j=1}^N$ (instead of contraction ratio ${\bf r} =(r_j)_{j=1}^N$), and expect some new metric induced on $K$.
\medskip

 {\rm
 Let $\Sigma^*$ be the  symbolic space of a contractive IFS $\{S_j\}_{j=1}^N$.
Let ${\bf s}=(s_1,s_2,\cdots,s_N)$ be a vector of weights on $\Sigma$ with $s_j \in (0,1)$
(not necessarily probability weight).
Write $s_* := \min_{j \in \Sigma}\{s_j\}$, and $s_x := s_{i_1}s_{i_2}\cdots s_{i_m}$ for $x = i_1i_2\cdots i_m \in \Sigma^*$ ($s_\vartheta = 1$ by convention). We consider a regrouping on $\Sigma^*$ by setting $X_0 := \{\vartheta\}$, and for $n \geq 1$,
\begin{equation} \label{eq6.2}
X_n = X_n({\bf s}) := \{x=i_1i_2 \cdots i_m \in \Sigma^*: \, s_x \leq s_*^n < s_{i_1}s_{i_2}\cdots s_{i_{m-1}}\}.
\end{equation}
Let $X = X({\bf s}) := \bigcup_{n=0}^\infty X_n$ denote the {\it modified coding space} with respect to ${\bf s}$.
This $X$ has a natural tree structure $E_v$ that consists of edges between each $x =i_1i_2\cdots i_m\in X_n$ and $y = i_1i_2 \cdots i_m i_{m+1} \cdots i_k \in X_{n+1}$. Let $\Phi(x) = S_x(K)$, $x \in X$. Then $\Phi$ is a saturated index map on $(X, E_v)$ over $(K, \rho)$.

\vspace{1mm}

Let $r^* = \max_{j \in \Sigma}\{r_j\}$ be the maximal contraction ratio of $\{S_j\}_{j \in \Sigma}$.
Then it is clear that $\{\Phi (x)\}_{x \in X(\bf s)}$ is always of exponential type-$(b)$ with $b=|\log r^*|$.
 From \eqref{eq4.9} and Theorem \ref{th4.5}}, the associated $AI_b$-graph $(X({\bf s}),E^{(b)})$ brings a metric $\tilde\theta_a$ on $K$ that is H\"older equivalent to the original $\rho$, which is similar to the previous investigation;  also, note that the index map $\Phi$ on $(X({\bf s}),E^{(b)})$ rarely satisfies the separation conditions in Section \ref{sec:5}, and hence we cannot expect that $(X({\bf s}),E^{(b)})$ has bounded degree.
The more interesting question is to investigate the hyperbolicity of $AI_\infty$-graph $(X({\bf s}),E^{(\infty)})$ without assuming  any separation property. The problem is difficult in general. However, for post critically finite (p.c.f.) sets, we have some rather complete conclusions, as well as a new connection to the harmonic structure and resistance networks in analysis on fractals.

\bigskip

We recall the notion of p.c.f.~sets (without assuming self-similarity) \cite{Ki1}.  Let $\Sigma^\infty := \{i_1i_2\cdots: \ i_k \in \Sigma,  \   k\geq 1\}$ be the set of infinite words, and let $\varpi: \Sigma^\infty \to K$ be the natural surjection defined by
$$
\{\varpi(i_1i_2 \cdots)\} = {\bigcap}_{k=1}^\infty S_{i_1i_2\cdots i_k}(K).
$$
The {\it shift map} $\sigma: \Sigma^\infty \to \Sigma^\infty$ is given by $\sigma(i_1i_2i_3\cdots) = i_2i_3i_4\cdots$. Define the {\it critical set} and the {\it post critical set} by
\begin{equation} \label{eq6.3}
\mathcal C:= \varpi^{-1}\Big({\bigcup}_{i,j \in \Sigma, i \neq j}\big(S_i(K) \cap S_j(K)\big)\Big)\quad \hbox{and} \quad \mathcal P := {\bigcup}_{n=1}^\infty \,\sigma^n(\mathcal C)
\end{equation}
respectively. We call the IFS $\{S_j\}_{j=1}^N$ (or $K$) {\it post critically finite} (p.c.f.) if $\mathcal P$ is a finite set. Also we define $V_0 = \varpi(\mathcal P)$ as the {\it boundary} of $K$. A p.c.f.~set $K$ has the property that every two cells $S_i(K), S_j(K)$, $i, j \in \Sigma, i \neq j$ are disjoint or intersect at finitely many points.
An important consequence is that \cite[Proposition 1.3.5]{Ki1}  for any distinct $x, y$ in the same $X_n({\bf s})$,
$$
S_x(K) \cap S_y(K) = S_x(V_0) \cap S_y(V_0).
$$

\medskip

\begin{theorem} \label{th6.3}
Let $\{S_j\}_{j =1}^N$ be a contractive IFS that satisfies the p.c.f.~property. Then there exists an integer $m>0$ such that for any ${\bf s} \in (0,1)^N$, the $AI_\infty$-graph $(X({\bf s}), E^{(\infty)})$ is an $(m,1)$-departing expansive graph, and hence $(X({\bf s}), E^{(\infty)})$ is  admissible. Moreover, it has bounded degree.
\end{theorem}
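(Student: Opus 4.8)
Proof proposal.

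The plan is to verify the two structural properties---$(m,1)$-departing and bounded degree---from which everything else in the statement follows formally: an $AI_\infty$-graph is automatically expansive (as noted after Definition~\ref{de4.4}), so once the $(m,1)$-departing property is established, Theorem~\ref{th2.11} gives hyperbolicity and Proposition~\ref{th4.6} gives admissibility; since $(X({\bf s}),E_v)$ is a tree, $(X({\bf s}),E^{(\infty)})$ is then a hyperbolic augmented tree. I would fix the IFS data $r^* = \max_j r_j < 1$, the diameter $|K|_\rho$, and $\delta_0 := \min\{\rho(v,v') : v \neq v' \in V_0\} > 0$ (positive since $V_0$ is finite), and choose the single integer $m$ with $(r^*)^m |K|_\rho < \delta_0$. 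This choice uses only the IFS and not ${\bf s}$, as the statement requires.

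For the departing property, fix ${\bf s}$, take $x,y \in X_n({\bf s})$ with $d_h(x,y) > 1$ (equivalently $S_x(K) \cap S_y(K) = \emptyset$ by \eqref{eq4.5}), and $u \in \mathcal J_m(x)$, $v \in \mathcal J_m(y)$; I must show $d_h(u,v) > 2$. The cases $d_h(u,v) = 0$ and $d_h(u,v) = 1$ are immediate: $u = v$ forces $x \sim_h y$ by expansiveness (a contradiction), while $S_u(K) \subset S_x(K)$ and $S_v(K) \subset S_y(K)$ rule out $u \sim_h v$. The crux is to exclude a bridge $w \in X_{n+m}$ with $u \sim_h w \sim_h v$. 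Using the p.c.f.\ intersection identity $S_a(K) \cap S_b(K) = S_a(V_0) \cap S_b(V_0)$ for same-level $a \neq b$, I would first locate $w$: writing $x' \in X_n$ for the unique tree-ancestor of $w$, the shared vertices of $w$ with $u$ and with $v$ would land in $S_x(K) \cap S_y(K) = \emptyset$ if $x' \in \{x,y\}$, so $x' \notin \{x,y\}$. The same identity then places these two shared vertices at $p = S_{x'}(v_p)$ and $q = S_{x'}(v_q)$ with $v_p, v_q \in V_0$, and $p \neq q$ (else $p \in S_x(K) \cap S_y(K) = \emptyset$).

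The main obstacle is that, under an \emph{arbitrary} weight ${\bf s}$, the cells in a single level $X_n({\bf s})$ need not have comparable size, so one cannot control $\diam S_w(K)$ directly from $n$. I would bypass this by transporting the problem to the root IFS: since $p,q \in S_w(K) = S_{x'}(S_\tau(K))$ where $w = x'\tau$, injectivity of $S_{x'}$ gives $v_p, v_q \in S_\tau(K)$, so the purely internal cell $S_\tau(K)$ carries two distinct points of $V_0$. A weight computation---$s_w \le s_*^{\,n+m}$ and $s_{x'} > s_*^{\,n+1}$ give $s_\tau < s_*^{\,m-1}$, whence $s_*^{\,|\tau|} \le s_\tau$ forces $|\tau| \ge m$---then yields $\rho(v_p,v_q) \le \diam S_\tau(K) \le (r^*)^{|\tau|}|K|_\rho \le (r^*)^m |K|_\rho < \delta_0$, contradicting $v_p \neq v_q$. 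Thus no bridge exists, $d_h(u,v) > 2$, and $(X({\bf s}),E^{(\infty)})$ is $(m,1)$-departing. The point worth stressing is exactly this decoupling: the weights enter only through the combinatorial length bound $|\tau| \ge m$, while the metric separation comes entirely from the (weight-free) shrinkage of deep cells of the original IFS.

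For bounded degree I would estimate the three contributions to $\deg(y)$ for $y \in X_n$. Since $(X({\bf s}),E_v)$ is a tree, $\#\mathcal J_{-1}(y) = 1$. The children of $y$ are the minimal extensions $y\tau$ with $s_{y\tau} \le s_*^{\,n+1}$; as $s_y > s_*^{\,n+1}$ and each letter contracts the weight by at most $s^* := \max_j s_j$, the parent of any such $\tau$ has length $< \log s_* / \log s^*$, so $\#\mathcal J_1(y)$ is bounded by a constant depending only on ${\bf s}$ and $N$. Finally, every $x \in X_n$ with $(x,y) \in E_h^{(\infty)}$ shares, by the intersection identity, a vertex of the finite set $S_y(V_0)$; and for each point $p \in K$ the number of level-$n$ cells containing $p$ is at most the uniform p.c.f.\ bound $C_0 := \sup_p \#\varpi^{-1}(p) < \infty$, since each address of $p$ has a unique prefix in the antichain $X_n({\bf s})$. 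Hence the horizontal degree is at most $C_0 \, \#V_0$, and $\deg(y) \le 1 + \#\mathcal J_1(y) + C_0 \#V_0$ is bounded uniformly in $y$. I expect the departing step, specifically the reduction to $S_\tau(K)$ that neutralizes the weight-induced size distortion, to be the delicate part; the degree bounds are routine once the p.c.f.\ finiteness $C_0 < \infty$ is invoked.
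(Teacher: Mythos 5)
Your proposal is correct and takes essentially the same approach as the paper: your bridge-exclusion argument (two distinct points of $V_0$ forced into a cell $S_\tau(K)$ with $|\tau|\geq m$, contradicting the contraction bound $(r^*)^m$) is precisely the paper's proof of the $(m,1)$-departing property run in the direct rather than contrapositive direction (the paper pulls the bridge $z$ back through its level-$(n-m)$ ancestor $z^{(-m)}$ and finds two distinct points of $V_0\cap S_w(V_0)$ with $z=z^{(-m)}w$), and your degree estimate $\deg(y)\leq 1+\#\mathcal J_1(y)+\#V_0\cdot\sup_{\xi\in K}\#\varpi^{-1}(\xi)$ matches the paper's verbatim. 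The only cosmetic differences are your choice of $m$ via $(r^*)^m|K|_\rho<\delta_0$ (the paper uses $\ell^*=\diam V_0$ in place of $|K|_\rho$) and your explicit appeal to the injectivity of $S_{x'}$, which the paper uses implicitly in asserting $S_{z^{(-m)}}(S_w(V_0))\cap S_{z^{(-m)}}(V_0)=S_{z^{(-m)}}(S_w(V_0)\cap V_0)$.
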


\begin{proof} It is clear that the $AI_\infty$-graph $(X({\bf s}),E^{(\infty)})$ is expansive.
 We show that $(X({\bf s}),E^{(\infty)})$ is $(m,1)$-departing for some positive integer $m$, i.e.,
$$
d_h(x,y) = 2 \hbox{ with } |x|=|y| \geq m \quad \Rightarrow \quad d_h(x^{(-m)},y^{(-m)}) \leq 1,
$$
where $\mathcal J_{-m}(x) = \{x^{(-m)}\}$.

\vspace{0.1cm}

Set $\ell_*:= \min_{\xi \neq \eta \in V_0} \{\rho(\xi,\eta)\}$, $\ell^*:= \max_{\xi,\eta \in V_0} \{\rho(\xi,\eta)\}$ and $m:=\lfloor \frac{\log (\ell_*/\ell^*)}{\log r^*}\rfloor + 1$, where $r^* = \max_i\{r_i\}$.
Let $[x,z,y]$ be a horizontal geodesic. Suppose the statement is not true. Then $d_h(x^{(-m)},y^{(-m)})=2$, so that $[x^{(-m)},z^{(-m)},y^{(-m)}]$ is also a horizontal geodesic. As $d_h(x, y) =2$, we can choose distinct $\xi_1, \xi_2$ such that $\xi_1 \in S_x(K) \cap S_z(K)$ and $\xi_2 \in S_y(K) \cap S_z(K)$.
 Observe that
$$
S_x(V_0) \cap S_z(V_0) = S_x(K) \cap S_z(K) \subset S_{x^{(-m)}}(K) \cap S_{z^{(-m)}}(K) = S_{x^{(-m)}}(V_0) \cap S_{z^{(-m)}}(V_0).
$$
Thus $\xi_1 \in S_z(V_0) \cap S_{z^{(-m)}}(V_0)$, and so does $\xi_2$.
Let $w \in \Sigma^*$ satisfy $z=z^{(-m)}w$.
It follows that
$$
\xi_i \in S_z(V_0) \cap S_{z^{(-m)}}(V_0) = S_{z^{(-m)}}(V_0 \cap S_w(V_0)), \qquad i=1,2.
$$
Therefore there exists $\eta_i \in V_0$ such that
$S_w(\eta_i) \in V_0$ (and  $S_z(\eta_i) = \xi_i$) for $i=1,2$. The fact that $w \in \bigcup_{k \geq m} \Sigma^k$ implies
$$
\ell_* \leq \rho(S_w(\eta_1),S_w(\eta_2)) \leq (r^*)^m \rho(\eta_1,\eta_2) \leq (r^*)^m\ell^*.
$$
Thus $m \leq \frac{\log (\ell_*/\ell^*)}{\log r^*}$, contradicting the choice of $m$ above. Hence $(X({\bf s}),E^{(\infty)})$ is $(m,1)$-departing, and is an admissible augmented tree (Theorem \ref{th2.11} and  Proposition \ref{th4.6}).

\vspace{1mm}

Finally, we prove the bounded degree property. Set $s^*:=\max_{j \in \Sigma}\{s_j\}$. For  $x \in X_n({\bf s})$ and $xv \in X_{n+1}({\bf s})$, using \eqref{eq6.2} we have
$$
s_v = s_{xv} \cdot s_x^{-1} \geq s_*^{n+2} \cdot s_*^{-n} = s_*^2.
$$
Therefore $v \in \bigcup_{k=1}^{m'} \Sigma^k$ where $m' := \lfloor\frac{2\log s_*}{\log s^*}\rfloor$, and hence $\#\mathcal J_1(x) \leq \#(\bigcup_{k=1}^{m'} \Sigma^k) < N^{m'+1}$.  Using \eqref{eq6.3}, it follows that $\sup_{\xi \in K} \{\#(\varpi^{-1}(\xi))\} \leq \# \mathcal C < \infty$. For $x,y \in X({\bf s})$, note that $S_x(K) \cap S_y(K) \subset S_x(V_0)$. Therefore
$$
\#\{y \in X:(x,y) \in E_h\} \leq \#(\varpi^{-1}(S_x(V_0))) \leq \#V_0 \cdot {\sup}_{\xi \in K} \{\#(\varpi^{-1}(\xi))\} \leq \#V_0 \cdot \#\mathcal C.
$$
As $\deg(x) \leq 1+ \#\mathcal J_1(x) + \#\{y \in X:(x,y) \in E_h\}$ for all $x \in X$, the graph $(X({\bf s}),E^{(\infty)})$ has bounded degree.
\end{proof}

\bigskip

Consequently,  the map $\kappa : \partial X({\bf s})\to K$ is a bijection as in Definition \ref{de4.2} (actually  $\kappa$ is a homeomorphism by Proposition \ref {th4.3}). Hence the Gromov metric $\theta_a$ on $\partial X$ induces a new metric $\tilde\theta_a$ on $K$ by \eqref{eq4.9}.  Next we show that the index map $\Phi$ over $(K,\tilde\theta_a)$ satisfies the condition $(B_a)$ in Definition \ref{de5.1}.

\medskip

\begin{proposition} \label{th6.4}
Let $\{S_j\}_{j =1}^N$ be a contractive p.c.f.~IFS. For ${\bf s} \in (0,1)^N$, let $\tilde\theta_a$ be the metric on $K$ induced by $(X({\bf s}),E^{(\infty)})$. Then there exists $c_0>0$ such that for any $x \in X({\bf s})$, $S_x(K)$ contains a ball of radius $c_0 e^{-a|x|}$ in $(K,\tilde\theta_a)$ (i.e., condition $(B_a)$).
\end{proposition}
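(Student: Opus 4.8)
Proposition \ref{th6.4} asserts condition $(B_b)$ of Definition \ref{de5.1} with $b=a$, where the ambient metric on $K$ is the induced $\tilde\theta_a$. The plan is to transport everything to the hyperbolic boundary. Since $\tilde\theta_a$ is defined by \eqref{eq4.9} precisely so that $\kappa:(\partial X,\theta_a)\to(K,\tilde\theta_a)$ is a bijective isometry, and since the index map $\Phi(x)=S_x(K)$ is saturated (so that $\kappa(\mathcal J_\partial(x))=\widetilde\Phi(x)=S_x(K)=K_x$ by \eqref{eq4.4}), the required ball $B(\iota(x),c_0e^{-a|x|})\cap K\subset K_x$ is the $\kappa$-image of a ball $B_{\theta_a}(\xi_x,c_0e^{-a|x|})\subset\mathcal J_\partial(x)$. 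Thus it suffices to exhibit, for each node $x\in X(\mathbf s)$, such a boundary ball inside $\mathcal J_\partial(x)$. By Theorem \ref{th6.3} the graph is $(m,1)$-departing, so the left inclusion in \eqref{eq3.10} of Proposition \ref{th3.3} (with $k=1$) produces, for any node $x'$ and any $\xi'\in\mathcal J_\partial(x')$, a ball $B_{\theta_a}(\xi',C^{-1}e^{-a|x'|})\subset\mathcal J^1_\partial(x')$. Hence I would reduce the proposition to finding a descendant node $x'\succ x$ with $|x'|-|x|$ bounded by a constant and with $\mathcal J^1_\partial(x')\subset\mathcal J_\partial(x)$; then the ball has radius at least $C^{-1}e^{-a(|x|+p_0)}=:c_0e^{-a|x|}$, and $\iota(x):=\kappa(\xi')\in\kappa(\mathcal J_\partial(x))=K_x$ is the desired projection.

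The inclusion $\mathcal J^1_\partial(x')\subset\mathcal J_\partial(x)$ holds as soon as every node $y'$ with $|y'|=|x'|$ and $S_{y'}(K)\cap S_{x'}(K)\neq\emptyset$ has $x$ as a prefix, i.e.\ $x\preceq y'$. To force this I want $S_{x'}(K)$ to lie well inside $S_x(K)$, away from the boundary $S_x(V_0)$. First I would fix, once and for all, a word $w_0$ of length $k_0$ with $S_{w_0}(K)\cap V_0=\emptyset$: such a word exists because the number of length-$k$ words $w$ with $S_w(K)\cap V_0\neq\emptyset$ is at most $\#V_0\cdot\sup_{\xi}\#(\varpi^{-1}(\xi))\le\#V_0\cdot\#\mathcal C$, a bound independent of $k$ (this uniform bound is the one already used in the proof of Theorem \ref{th6.3}), while there are $N^k$ words of length $k$. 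Given $x\in X_n(\mathbf s)$, I then let $x'$ be the shallowest node of $X(\mathbf s)$ on a ray through $xw_0$ that already has $xw_0$ as a prefix. A weight estimate using $s_*^{n+1}<s_x\le s_*^n$ together with $s_{w_0}\ge s_*^{k_0}$ shows $|x'|-n\le k_0+1=:p_0$, uniformly in $x$. Since $S_{x'}(K)\subset S_{xw_0}(K)$ and $S_{xw_0}(K)\cap S_x(V_0)=S_x\big(S_{w_0}(K)\cap V_0\big)=\emptyset$, the cell $S_{x'}(K)$ indeed avoids $S_x(V_0)$.

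The heart of the matter is the p.c.f.\ separation fact that rules out the undesirable neighbors: if $y'$ is incomparable to $x$ (neither is a prefix of the other), then $S_{y'}(K)\cap S_x(K)\subset S_x(V_0)$. I would prove this symbolically. A common point $\xi$ has codings $\varpi(x\alpha)=\varpi(y'\beta)$ that first disagree one letter past the longest common prefix $w$ of $x,y'$; the two tails therefore lie in the critical set $\mathcal C$, and applying $\sigma$ together with the forward invariance $\sigma(\mathcal P)\subset\mathcal P$ (immediate from the definition \eqref{eq6.3}) yields $\alpha\in\mathcal P$, so that $\varpi(\alpha)\in V_0$ and $\xi=S_x(\varpi(\alpha))\in S_x(V_0)$. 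Granting this, any neighbor $y'$ of $x'$ incomparable to $x$ would meet $S_{x'}(K)\subset S_x(K)$ inside $S_x(V_0)$, contradicting $S_{x'}(K)\cap S_x(V_0)=\emptyset$; and $y'$ cannot be a proper ancestor of $x$ because $|y'|=|x'|\ge|x|$. Hence $x\preceq y'$, which gives $\mathcal J^1_\partial(x')\subset\mathcal J_\partial(x)$ and finishes the proof via the isometry $\kappa$.

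I expect the two genuine difficulties to be, first, the incomparable-intersection fact: the symbolic reduction to the $\sigma$-invariance of $\mathcal P$ is delicate, particularly in verifying that a point with several codings still lands in $S_x(V_0)$ through its coding via $x$. Second, one must control the geometry of the modified coding tree $(X(\mathbf s),E_v)$, whose edges span variable word-lengths; the slightly fussy part is the weight bookkeeping that guarantees the interpolating node $x'$ exists at bounded extra level $p_0$. Everything else is a routine assembly of Proposition \ref{th3.3}, \eqref{eq4.4}, and \eqref{eq4.9}.
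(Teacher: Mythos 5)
Your proof is correct, and its skeleton matches the paper's: both arguments place the projection point $\iota(x)$ in a descendant cell at uniformly bounded extra depth that avoids the boundary set $S_x(V_0)$, and both then conclude from the separation of disjoint (i.e., non-adjacent) cells that the $(m,1)$-departing property of Theorem \ref{th6.3} guarantees. The assembly, however, is genuinely different. The paper stays on the attractor side: it applies Theorem \ref{th4.8} (i) $\Rightarrow$ (ii) with $\rho=\tilde\theta_a$ and $b=a$ (legitimate since $\kappa$ is an isometry for $\tilde\theta_a$, hence trivially a H\"older equivalence), obtaining the separation estimate \eqref{eq6.4}; it then picks $y \in \mathcal J_\ell(x)$ with $S_y(K)\cap S_x(V_0)=\emptyset$ by a per-vertex pigeonhole against $\#\mathcal C$, and bounds ${\rm dist}_{\tilde\theta_a}(\iota(x),K\setminus S_x(K))$ directly---no control of the horizontal neighbors of the deep vertex is needed, because $K\setminus S_x(K)$ lies in cells disjoint from $S_y(K)$. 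You instead stay on the boundary side and invoke the ball-in-shadow inclusion of Proposition \ref{th3.3} with $k=1$, which forces you to show that every horizontal neighbor $y'$ of $x'$ descends from $x$; that is exactly why you need the incomparable-word intersection lemma, which the paper's route bypasses. Two remarks on your extra ingredients. First, the globally fixed word $w_0$ with $S_{w_0}(K)\cap V_0=\emptyset$ is a nice simplification: the counting is done once rather than for each $x$, and it makes the bounded-depth bound $|x'|-|x|\le k_0+1$ transparent. Second, your symbolic proof of the incomparability fact (via $\sigma$-invariance of $\mathcal P$) is correct modulo the injectivity of the maps implicit in the p.c.f.\ framework, but it is more work than needed: if $y'$ is incomparable with $x$, its ancestor $z \in X_{|x|}({\bf s})$ is distinct from $x$, and the same-level identity already quoted in the paper from Kigami, $S_x(K)\cap S_z(K)=S_x(V_0)\cap S_z(V_0)$, gives $S_{y'}(K)\cap S_x(K)\subset S_z(K)\cap S_x(K)\subset S_x(V_0)$ in one line. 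Both routes ultimately rest on the same separation principle (Proposition \ref{th3.2}), so neither is more general; yours trades the citation of Theorem \ref{th4.8} for a little extra combinatorics on the coding tree.
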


\begin{proof}
Note that $(X({\bf s}),E^{(\infty)})$ is $(m,1)$-departing (Theorem \ref{th6.3}). By Theorem \ref{th4.8},
there exists $\gamma>0$ such that for $x,y \in X({\bf s})$,
\begin{equation} \label{eq6.4}
|x|=|y| \ \hbox{ and } \ S_x(K) \cap S_y(K) = \emptyset \ \ \Rightarrow  \ \ {\rm dist}_{\tilde\theta_a}(S_x(K),S_y(K)) > \gamma  e^{-a|x|}.
\end{equation}
Let $\ell := \lfloor\frac{\log (\# \mathcal C)}{\log N}\rfloor+1$, where $\mathcal C$ is the critical set. Then for $x \in X$, there is $y \in \mathcal J_\ell(x)$ such that $S_{y}(K) \subset S_x(K \setminus V_0)$. Choose $\iota(x) \in S_{y}(K)$ arbitrarily. It follows from \eqref{eq6.4} that
$$
{\inf}_{\eta \in K \setminus S_x(K)}\{\tilde\theta_a(\iota(x),\eta)\} \geq {\rm dist}_{\tilde\theta_a} \big (S_y(K), {\cup}_{z \in X_{|x|} \setminus \{x\}}S_z(K)\big ) > \gamma \cdot e^{-a(|x|+\ell)}.
$$
Hence $B_{\tilde\theta_a}(\iota(x), c_0e^{-a|x|}) \subset S_x(K)$ with $c_0 = \gamma e^{-a\ell}$. This completes the proof.
\end{proof}

\bigskip

Let $\alpha = \alpha({\bf s})$ be the positive number such that $\sum_{j \in \Sigma} s_j^\alpha = 1$, and let $\mu_{\bf s}$ be the {\it self-similar measure} with respect to the vector of probability weights $(s_1^\alpha, s_2^\alpha,\cdots, s_N^\alpha)$, i.e., the unique regular Borel probability measure on $K$ that satisfies
\begin{equation} \label{eq6.5}
\mu_{\bf s}(\cdot) = {\sum}_{j \in \Sigma} \, s_j^\alpha \cdot \mu_{\bf s} (S_i^{-1}(\cdot)).
\end{equation}
In particular if the IFS is p.c.f., then $\mu_{\bf s}(S_x(K)) = s_x^\alpha$ for all $x \in X$.

\medskip

\begin{proposition} \label{th6.5}
Let $\{S_j\}_{j =1}^N$ be a contractive IFS that satisfies the p.c.f.~property. For ${\bf s} \in (0,1)^N$, the self-similar measure $\mu_{\bf s}$ is Ahlfors-regular with exponent $(-\alpha \log s_* / a)$ on $(K,\tilde\theta_a)$, i.e.,
\begin{equation} \label{eq6.6}
\mu_{\bf s}(B_{\tilde\theta_a}(\xi,r)) \asymp r^{-\alpha \log s_* / a}, \qquad \forall \ \xi \in K,\ r \in (0,1).
\end{equation}
\end{proposition}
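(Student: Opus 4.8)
The plan is to work directly on $(K,\tilde\theta_a)$, transporting the shadow estimates of Section \ref{sec:3} through the isometry $\kappa\colon(\partial X,\theta_a)\to(K,\tilde\theta_a)$ (it is an isometry by the definition \eqref{eq4.9} of $\tilde\theta_a$). Under $\kappa$ the cell $\mathcal J_\partial(x)$ corresponds to $\kappa(\mathcal J_\partial(x))=\widetilde\Phi(x)=\Phi(x)=S_x(K)$ by \eqref{eq4.4} (as $\Phi$ is saturated), and the $1$-shadow $\mathcal J^1_\partial(x)$ to $\bigcup_{d_h(x,y)\le1}S_y(K)$. Since $(X({\bf s}),E^{(\infty)})$ is $(m,1)$-departing (Theorem \ref{th6.3}), Proposition \ref{th3.3} with $k=1$ furnishes a constant $C\ge1$ so that for every $x\in X({\bf s})$ and $\xi\in S_x(K)$,
\[
B_{\tilde\theta_a}(\xi,C^{-1}e^{-a|x|})\ \subset\ {\bigcup}_{d_h(x,y)\le1}S_y(K)\ \subset\ B_{\tilde\theta_a}(\xi,Ce^{-a|x|}).
\]

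First I would record the measures of cells. From the regrouping \eqref{eq6.2}, every $x\in X_n$ satisfies $s_*^{n+1}<s_x\le s_*^n$, so $\mu_{\bf s}(S_x(K))=s_x^\alpha$ obeys $s_*^{(n+1)\alpha}<\mu_{\bf s}(S_x(K))\le s_*^{n\alpha}$; thus every level-$n$ cell has $\mu_{\bf s}$-measure comparable to $s_*^{n\alpha}$ with constants independent of $x,n$. The numerical heart of the matter is that if $e^{-an}\asymp r$, i.e.\ $n\approx -a^{-1}\log r$, then $s_*^{n\alpha}=\exp(n\alpha\log s_*)\asymp r^{-\alpha\log s_*/a}=r^{Q}$, where $Q:=-\alpha\log s_*/a>0$; all additive slack in the choice of $n$ only affects multiplicative constants.

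For the lower bound, given $\xi\in K$ and $r\in(0,1)$ I would take $n_-:=\lceil a^{-1}(\log C-\log r)\rceil$ and a cell $x\in X_{n_-}$ with $\xi\in S_x(K)$ (such $x$ exists since the level-$n_-$ cells cover $K$). Then $Ce^{-an_-}\le r$ forces $S_x(K)\subset B_{\tilde\theta_a}(\xi,r)$ by the right inclusion above, giving $\mu_{\bf s}(B_{\tilde\theta_a}(\xi,r))\ge s_x^\alpha>s_*^{(n_-+1)\alpha}\ge c_1 r^{Q}$. For the upper bound I would take $n_+:=\lfloor a^{-1}(-\log C-\log r)\rfloor$ (for $r$ near $1$, where this is $\le0$, set $n_+=0$ and use $\mu_{\bf s}(K)=1$, absorbed into the constant) and a cell $x\in X_{n_+}$ with $\xi\in S_x(K)$. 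Then $C^{-1}e^{-an_+}\ge r$ yields $B_{\tilde\theta_a}(\xi,r)\subset\bigcup_{d_h(x,y)\le1}S_y(K)$ by the left inclusion. Here the bounded degree of $(X({\bf s}),E^{(\infty)})$ (Theorem \ref{th6.3}) enters decisively: with $t:=\sup_x\deg(x)<\infty$ the union comprises at most $t+1$ level-$n_+$ cells, so $\mu_{\bf s}(B_{\tilde\theta_a}(\xi,r))\le(t+1)s_*^{n_+\alpha}\le c_2 r^{Q}$. Combining the two estimates gives \eqref{eq6.6}.

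The only genuinely delicate step is the upper bound: without bounded degree the $1$-shadow of $x$ could meet arbitrarily many level-$n$ cells and the estimate $\mu_{\bf s}(B)\le(t+1)s_*^{n\alpha}$ would fail. Everything else is routine bookkeeping—converting $e^{-an}\asymp r$ into $s_*^{n\alpha}\asymp r^{Q}$ and verifying that floors, ceilings, and the regime $r\to1$ merely alter the constants in $\asymp$.
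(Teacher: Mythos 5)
Your proposal is correct and follows essentially the same route as the paper: both rest on Proposition \ref{th3.3} (with $k=1$, available since Theorem \ref{th6.3} gives the $(m,1)$-departing property) to sandwich the $1$-shadow $\kappa(\mathcal J^1_\partial(x))=\bigcup_{d_h(x,y)\le 1}S_y(K)$ between two balls, combined with bounded degree for the upper bound and $\mu_{\bf s}(S_x(K))=s_x^\alpha\asymp s_*^{\alpha|x|}$ for the cell measures. The only difference is that you carry out explicitly the floor/ceiling interpolation from radii $e^{-an}$ to arbitrary $r\in(0,1)$, which the paper leaves implicit after stating its two displayed ball estimates.
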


\begin{proof}
Consider the $AI_\infty$-graph $(X({\bf s}), E^\infty)$. For $x \in X({\bf s})$, set
$$
\Phi^1(x) := \bigcup\{\Phi(y): d_h(x,y) \leq 1\} = \kappa(\mathcal J^1_\partial(x)),
$$
where the index map $\Phi(x) = S_x(K)$.
By Proposition \ref{th3.3} and Theorem \ref{th6.3}, there is $C_0 \geq 1$ such that
\begin{equation*}
B_{\tilde\theta_a}(\xi,C_0^{-1}e^{-a|x|}) \subset \Phi^1(x) \subset B_{\tilde\theta_a}(\xi,C_0e^{-a|x|}), \qquad \forall\ x \in X({\bf s}), \ \xi \in \Phi(x).
\end{equation*}
Using $s_*^{\alpha(|x|+1)} < s_x^\alpha = \mu_{\bf s}(\Phi(x)) \leq \mu_{\bf s}(\Phi^1(x)) \leq t s_*^{\alpha|x|}$ where $t :=\sup_{x \in X({\bf s})} \deg(x) $($<\infty$ by Theorem \ref{th6.3}), we have
\begin{equation*}
\begin{cases}
\ \mu_{\bf s} (B_{\tilde\theta_a}(\xi,C_0^{-1}e^{-an})) \leq t s_*^{\alpha n}, \\
\ \mu_{\bf s} (B_{\tilde\theta_a}(\xi,C_0e^{-an})) \geq s_*^{\alpha(n+1)},
\end{cases}
\qquad \forall \ \xi \in K,\  n \geq 0.
\end{equation*}
This proves \eqref{eq6.6}.
\end{proof}

\bigskip

In the following, we  show that the metric measure space $(K,\tilde\theta_a,\mu_{\bf s})$  plays a special role  in connection with the study of  local regular Dirichlet forms (which give a Laplacian) on $K$ with a regular harmonic structure. This will also extend a consideration by Hu and Wang\cite{HW}, in which they studied the relation between the resistance metric $R$ and the Euclidean metric for IFS on $\mathbb R^d$.

\medskip

We first recall some notations. A {\it (discrete) Laplacian} $H = [H_{pq}]_{p,q \in V_0}$ on $V_0$ is a non-positive definite matrix on $V_0$ that satisfies $\sum_{q \in V_0} H_{pq} = 0$ for all $p \in V_0$, and $H_{pq} \geq 0$ for all distinct $p,q \in V_0$.  For a weight vector ${\bf s } \in (0,1)^N$, let $V_n = V_n({\bf s}) :={\bigcup}_{x \in X_n({\bf s})} S_x(V_0)$ for $n \geq 1$, and $V_* = V_*({\bf s}) := \bigcup_{n=0}^{\infty} V_n({\bf s})$. Denote the collection of real-valued functions on $V_n$ (or $V_*$) by $\ell(V_n)$ (or $\ell(V_*)$ respectively). For a Laplacian $H$ on $V_0$, we have
$$
(u, Hv) = {\sum}_{p\in V_0} u(p) \big ({\sum}_{q \in V_0} H_{pq} \, v(q)\big ), \quad u, v \in \ell(V_0).
$$
We define the energy form ${\mathcal E}_n$ on $V_n$ by
$$
{\mathcal E}_0 [u] = - (u, Hu), \qquad {\mathcal E}_n [u] =  {\sum}_{x \in X_n({\bf s})} s_x^{-1} {\mathcal E}_0[u\circ S_x], \quad \hbox{for} \  u \in \ell(V_n), \  \ n\geq 1
$$
(i.e.,
$\mathcal E_n[u] :=  \frac 12 \, {\sum}_{x \in X_n({\bf s})} s_x^{-1} {\sum}_{p,q \in V_0} H_{pq}|u(S_x(p))-u(S_x(q))|^2$).
We say that the pair $(H,{\bf s})$ is a {\it regular harmonic structure} \cite{Ki1} of $K$ if
\begin{equation} \label{eq6.7}
\min\{\mathcal E_{n+1}[v]: v \in \ell(V_{n+1}), \ v=u  \hbox{ on } V_n\} = \mathcal E_n[u], \qquad \forall\ n \geq 0 , \  u \in \ell(V_n).
\end{equation}
This implies that for $u \in \ell(V_*)$, $\{\mathcal E_{n}[u]\}_{n=0}^\infty$ is an increasing sequence  (here in each $\mathcal E_n[u]$, $u$ is restricted on $V_n$), and
\begin{equation} \label{eq6.8}
\mathcal E[u]:= {\lim}_{n \to \infty}\, \mathcal E_n[u] = {\sup}_{n \geq 1}\,\mathcal E_n[u], \qquad u \in \ell(V_*).
\end{equation}
The $u \in \ell(V_*)$ can be extended continuously to a function on $K$ if $\mathcal E[u]< \infty$. This defines
the local regular Dirichlet form $(\mathcal E, \mathcal D)$  with $\mathcal D := \{u \in C(K): \mathcal E[u] < \infty\}$, where $C(K)$ denotes the space of continuous functions on $K$.
This energy form is {\it self-similar} in the sense that for any $n \geq 1$,
\begin{equation} \label{eq6.9}
\mathcal E [u] = {\sum}_{x \in X_n({\bf s})} s_x^{-1} \mathcal E[u \circ S_x], \qquad \forall \ u \in \mathcal D.
\end{equation}

\bigskip

Define the {\it effective resistance} between two nonempty compact subsets $F,G \subset K$ by
\begin{equation*}
R(F,G) = (\inf\{\mathcal E[u]: u = 1 \hbox{ on } F, \hbox{ and } u = 0 \hbox{ on } G\})^{-1}
\end{equation*}
if $F$ and $G$ are disjoint, and $=0$ otherwise.


\medskip

\begin{lemma} \label{th6.6}
Let $K$ be a connected p.c.f.~set that possesses a regular harmonic structure $(H,{\bf s})$. Then there exists $\gamma'>0$ such that for any $x,y \in X({\bf s})$ with $|x|=|y|$, the inequality
$$
R(S_x(K),S_y(K)) \geq \gamma' \cdot s_*^{|x|}
$$
holds whenever $S_x(K) \cap S_y(K) = \emptyset$.
\end{lemma}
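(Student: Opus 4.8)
The plan is to estimate the effective resistance from below by producing a single admissible test function of controlled energy. Since
$R(S_x(K),S_y(K))=\big(\inf\{\mathcal E[u]:u=1\text{ on }S_x(K),\ u=0\text{ on }S_y(K)\}\big)^{-1}$,
writing $n=|x|=|y|$ it suffices to exhibit one $u\in\mathcal D$ with these boundary values and $\mathcal E[u]\le C\,s_*^{-n}$ for a constant $C=C(H,\mathbf s,K)$ independent of $x,y,n$; then $\gamma'=C^{-1}$ works. The natural candidate is the $(H,\mathbf s)$-harmonic extension $u$ to $K$ of the discrete datum $w\in\ell(V_n)$ given by $w=\mathbf 1_{S_x(V_0)}$ on $V_n$.

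First I would check the boundary behaviour of $u$. Because $K$ is p.c.f., each level-$n$ cell $S_z(K)$ meets the rest of $K$ only along $S_z(V_0)$ (this is the junction property $S_x(K)\cap S_z(K)=S_x(V_0)\cap S_z(V_0)$ recalled before Theorem \ref{th6.3}). Hence the restriction of $u$ to $S_x(K)$ is the harmonic function on that cell whose boundary datum on $S_x(V_0)$ is identically $1$; by self-similarity and the uniqueness of harmonic extensions for a regular harmonic structure \cite{Ki1}, this is the constant $1$, so $u\equiv 1$ on $S_x(K)$. Since $S_x(K)\cap S_y(K)=\emptyset$ forces $S_x(V_0)\cap S_y(V_0)=\emptyset$, the datum $w$ vanishes on $S_y(V_0)$, and the same argument gives $u\equiv 0$ on $S_y(K)$.

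Next I would bound $\mathcal E[u]$. As $u$ is the harmonic extension of $w$, the regularity of the harmonic structure \eqref{eq6.7} (equivalently, self-similarity \eqref{eq6.9} applied cellwise) yields $\mathcal E[u]=\mathcal E_n[w]=\sum_{z\in X_n(\mathbf s)}s_z^{-1}\mathcal E_0[w\circ S_z]$. Here $\mathcal E_0[w\circ S_z]=0$ unless $z$ is adjacent to $x$, i.e.\ $z\neq x$ and $S_z(K)\cap S_x(K)\neq\emptyset$: for $z=x$ the datum $w\circ S_x$ is constant $1$, and for a non-adjacent $z$ it is constant $0$. For each adjacent $z$ the datum takes values in $\{0,1\}$, so $\mathcal E_0[w\circ S_z]\le\tfrac12\sum_{p\neq q\in V_0}H_{pq}=:C_H$, while the regrouping \eqref{eq6.2} gives $s_z^{-1}<s_*^{-(n+1)}$, and the number of such $z$ is at most the maximal degree $t:=\sup_{z}\deg(z)<\infty$ of $(X(\mathbf s),E^{(\infty)})$ provided by Theorem \ref{th6.3}. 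Combining these, $\mathcal E[u]\le t\,C_H\,s_*^{-(n+1)}=:C\,s_*^{-n}$, whence $R(S_x(K),S_y(K))\ge C^{-1}s_*^{\,n}$.

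The only delicate point I anticipate is the first step: showing that the harmonic extension is genuinely \emph{constant} on each of $S_x(K)$ and $S_y(K)$. This relies essentially on the p.c.f.\ junction structure (so that a cell communicates with its complement only through the finite set $S_z(V_0)$) together with uniqueness of harmonic extensions; once it is secured, the identity $\mathcal E[u]=\mathcal E_n[w]$ and the uniform control of the neighbouring cells---exactly the bounded-degree conclusion of Theorem \ref{th6.3}---make the energy estimate routine.
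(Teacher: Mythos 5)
Your proposal is correct and follows essentially the same route as the paper: both take the harmonic extension $u$ of the level-$n$ datum $\mathbf 1_{S_x(V_0)}$, use the regularity \eqref{eq6.7}--\eqref{eq6.8} to get $\mathcal E[u]=\mathcal E_n[u]$, and bound that energy by the bounded-degree conclusion of Theorem \ref{th6.3} together with $s_z^{-1}<s_*^{-(n+1)}$. The only cosmetic difference is that the paper reaches admissibility via resistance monotonicity, $R(S_x(K),S_y(K))=R(S_x(V_0),S_y(V_0))\geq R(S_x(V_0),V_n\setminus S_x(V_0))$, whereas you verify directly (using the p.c.f.\ junction property) that $u$ is constant on $S_x(K)$ and $S_y(K)$; both steps are equally routine.
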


\begin{proof}
Let $x,y \in X_n{(\bf s)}$ with $S_x(K) \cap S_y(K) = \emptyset$. The p.c.f.~property implies
\begin{equation} \label{eq6.10}
R(S_x(K),S_y(K)) = R(S_x(V_0), S_y(V_0)) \geq R(S_x(V_0), V_n\setminus S_x(V_0)).
\end{equation}
 By \eqref{eq6.7} and \eqref{eq6.8}, there exists a function $u$ on $K$ such that $u = 1$ on $S_x(V_0)$, $u=0$ on $V_n \setminus S_x(V_0)$, and
\begin{equation} \label{eq6.11}
\mathcal E_n[u] = \mathcal E[u] = R(S_x(V_0), V_n\setminus S_x(V_0))^{-1}.
\end{equation}
Let $\mathfrak H(x):= \{z \in X_n({\bf s}) \setminus \{x\}: S_z(K) \cap S_x(K) \neq \emptyset\}$. Using the bounded degree property of the $AI_\infty$-graph (Theorem \ref{th6.3}), we have
$$
\#\mathfrak H(x)= \#\{z \in X({\bf s}): (z,x) \in E_h\} < {\sup}_{x \in X({\bf s})}\{\deg(x)\}=:t < \infty.
$$
It follows that
\begin{align*}
\mathcal E_n[u] = \frac 12\,{\sum}_{z \in \mathfrak H(x)}\, s_z^{-1} {\sum}_{p,q \in V_0}\, H_{pq}|u(S_z(p))-u(S_z(q))|^2 \leq \gamma'^{-1} s_*^{-n},
\end{align*}
where $\gamma'^{-1}:= t s_*^{-1} \big (\frac{\# V_0}{2} \big )^2 \max\{H_{pq}: p \neq q \in V_0\} $. This together with \eqref{eq6.10} and \eqref{eq6.11} proves the lemma.
\end{proof}

\bigskip

For $\xi,\eta \in K$, we write $R(\xi,\eta)$ instead of $R(\{\xi\},\{\eta\})$ for short. It is well-known that $R(\cdot,\cdot)$ is a metric on $K$, called the {\it resistance metric}. By definition, for compact subsets $F,G \subset K$ we have
\begin{equation} \label{eq6.12}
{\rm dist}_R(F,G) := \inf\{R(\xi,\eta): \xi \in F,\, \eta \in G\} \geq R(F,G).
\end{equation}
As a consequence of Lemma \ref{th6.6}, we see that the metric space $(K,R)$ with the index map $\Phi$ has the property in \eqref{eq6.4}.

\medskip

\begin{theorem} \label{th6.7}
Let $K$ be a connected p.c.f.~set that possesses a regular harmonic structure $(H,{\bf s})$. Then the metric $\tilde\theta_a$ induced by $(X({\bf s}),E^{(\infty)})$ satisfies
\begin{equation} \label{eq6.13}
\tilde\theta_a(\xi,\eta) \asymp R(\xi,\eta)^{-a/\log s_*} , \qquad \forall \ \xi,\eta \in K.
\end{equation}
\end{theorem}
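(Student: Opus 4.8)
The plan is to read the claimed estimate as an instance of Theorem \ref{th4.8}, applied to the saturated index map $\Phi(x) = S_x(K)$ but with the resistance metric $R$ in place of the original metric $\rho$ on $K$, and with $b = -\log s_* = |\log s_*|$, so that $b/a = -a/\log s_*$ is precisely the exponent appearing in \eqref{eq6.13}. Since the $AI_\infty$-graph $(X({\bf s}),E^{(\infty)})$ is $(m,1)$-departing by Theorem \ref{th6.3}, and since the graph structure as well as the boundary bijection $\kappa$ depend only on the intersection combinatorics of the cells $\{\Phi(x)\}$ and not on any metric placed on $K$, it will suffice to verify condition (ii) of Theorem \ref{th4.8} relative to $(K,R)$ with $k=1$; the theorem then delivers that $\kappa : (\partial X,\theta_a) \to (K,R)$ is a H\"older equivalence of exponent $b/a$.

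First I would check that $\Phi$ is of exponential type-$(b)$ under $R$. By Rayleigh monotonicity together with the self-similar scaling of the energy form \eqref{eq6.9}, the effective resistance between points of a single cell satisfies $R(S_x(p),S_x(q)) \leq s_x\,R(p,q)$, whence $|\Phi(x)|_R = |S_x(K)|_R \leq s_x\,|K|_R$; since $s_x \leq s_*^{|x|}$ and $|K|_R < \infty$, we get $|\Phi(x)|_R = O(e^{-b|x|})$, as needed. Next I would verify the separation \eqref{eq4.11} with $k=1$: for $x,y \in X_n({\bf s})$ with $d_h(x,y) > 1$, that is $S_x(K) \cap S_y(K) = \emptyset$, combining \eqref{eq6.12} with Lemma \ref{th6.6} yields
\[
{\rm dist}_R(\Phi(x),\Phi(y)) \geq R(S_x(K),S_y(K)) \geq \gamma'\,s_*^{\,n} = \gamma'\,e^{-bn},
\]
so \eqref{eq4.11} holds with, say, $\gamma = \gamma'/2$. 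Thus both clauses of hypothesis (ii) are met.

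Invoking Theorem \ref{th4.8} ((ii) $\Rightarrow$ (i)) over $(K,R)$ then gives $R(\kappa(\xi),\kappa(\eta))^{a/b} \asymp \theta_a(\xi,\eta)$ for all $\xi,\eta \in \partial X$; substituting the definition \eqref{eq4.9} of $\tilde\theta_a$ and $a/b = -a/\log s_*$ produces exactly $\tilde\theta_a(\xi,\eta) \asymp R(\xi,\eta)^{-a/\log s_*}$ on $K$, which is \eqref{eq6.13}. The one point that genuinely needs care is the legitimacy of running Theorem \ref{th4.8} over $(K,R)$ in the first place: the theorem presumes a complete metric space on which $\Phi$ is an index map. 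I would therefore record at the outset that, for a connected p.c.f.\ set carrying a regular harmonic structure, the resistance metric $R$ induces the original (compact, hence complete) topology on $K$, so that each $S_x(K)$ is again $R$-compact and the nested intersections $\bigcap_i \Phi(x_i)$ along rays remain singletons. Granting this compatibility, all hypotheses of Theorem \ref{th4.8} are in force and the argument closes; I expect this topological identification, rather than the two metric estimates, to be the main obstacle.
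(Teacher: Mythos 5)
Your proposal is correct and follows essentially the same route as the paper's proof: show that $\Phi(x)=S_x(K)$ is of exponential type-$(b)$ under $R$ with $b=|\log s_*|$ by combining the resistance sup-formula with the self-similar energy identity \eqref{eq6.9}, verify the separation condition \eqref{eq4.11} with $k=1$ from Lemma \ref{th6.6} and \eqref{eq6.12}, and then invoke Theorem \ref{th4.8} ((ii) $\Rightarrow$ (i)) with $\rho = R$ and translate via \eqref{eq4.9}. Your extra remark that the resistance topology coincides with the original compact topology (so that $(K,R)$ is a legitimate complete ambient space and the cells remain compact) fills in a point the paper leaves implicit, but it is a standard fact for regular harmonic structures rather than a different argument; the only blemish is the early slip writing $b/a$ for the exponent $a/b=-a/\log s_*$, which you correct in the final step.
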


\begin{proof}
Firstly we prove that the index map $\Phi$ is of exponential type-$(b)$ under $R$, where $b:=|\log s_*|$. For this, let $x \in X_n({\bf s})$ and $\xi,\eta \in S_x(K)$. Then for $u \in \mathcal D$,
\begin{align*}
 |u(\xi)-u(\eta)|^2 =\ & |u(S_x(\xi'))-u(S_x(\eta'))|^2 \qquad  \hbox{(here $S_x(\xi')=\xi$ and $S_x(\eta')=\eta$)} \\
\leq\ & R(\xi',\eta')\ \mathcal E[u \circ S_x] \ \leq \  |K|_R \ \mathcal E[u \circ S_x] \\
\leq\ & |K|_R \cdot s_x \mathcal E[u] \ \leq \ |K|_R \cdot  s_*^n \mathcal E[u],
\end{align*}
where the diameter $|K|_R < \infty$ (cf.~\cite[Theorem 3.3.4]{Ki1}), and the third inequality follows from the the energy self-similar identity  \eqref{eq6.9}. Therefore, by using an equivalent expression of the effective resistance \cite{Ki1},
$$
R(\xi,\eta) = \sup \{\frac{|u(\xi)-u(\eta)|^2}{\mathcal E[u]}: u \in \mathcal D, \ \mathcal E[u] \neq 0\} \leq |K|_R \cdot s_*^n = |K|_R \cdot e^{-bn}.
$$
This proves that $|\Phi(x)|_R \leq |K|_R \cdot e^{-b|x|}$ for all $x \in X$.

\vspace{1mm}

By \eqref{eq6.12} and Lemma \ref{th6.6}, the index map $\Phi$ satisfies \eqref{eq4.11} with $k=1$ under $R$.
It follows from Theorem \ref{th4.8} that the  bijection $\kappa: (\partial X, \theta_a) \to (K,R)$ satisfies \eqref{eq4.10} with $\rho=R$. From the definition \eqref{eq4.9} of $\tilde\theta_a$, we see that $\tilde\theta_a(\cdot,\cdot) \asymp R(\cdot,\cdot)^{a/b}$ on $K$.
\end{proof}

\section{Appendix: IFSs and augmented trees}
\label{sec:ex}

In this Appendix, for the convenience of the reader, we summarize some notations and known facts on iterated function systems, as well as some background of this paper.

\medskip

Let  $(M,\rho)$ be a complete metric space, and let $\{S_j\}_{j=1}^N$ ($N \geq 2$) be a {\it contractive iterated function system} (IFS) on $(M,\rho)$ \cite{Ki1}, i.e., each $S_j: M \to M$ satisfies
\begin{equation} \label{eq7.1}
r_j := \sup\{\frac{\rho(S_j(\xi),S_j(\eta))}{\rho(\xi,\eta)}: \xi, \eta \in M, \, \xi \neq \eta\}\ < \ 1.
\end{equation}
 Then there exists a unique nonempty compact set $K \subset M$ satisfying
$
K = {\bigcup}_{j=1}^N \, S_j(K),
$
called the {\it attractor} of $\{S_j\}_{j=1}^N$;  $K$  is called a {\it self-similar set} if $M= {\mathbb R}^n$ and the $S_j$'s are similitudes, i.e., $|S_j(\xi) -S_j(\eta)| = r_j |\xi-\eta|$.

\medskip

Let the {\it alphabet set} $\Sigma := \{1,2, \cdots, N\}$. Write $\Sigma^0 := \{\vartheta\}$ ($\vartheta$ is the {\it empty word}), and for $n \geq 1$, $\Sigma^n := \{x = i_1 \cdots i_k\cdots i_n: i_k \in \Sigma, \, \forall\ k\}$. Let $\Sigma^*:=\bigcup_{n=0}^\infty \Sigma^n$ denote the {\it symbolic space} of finite words.  This $\Sigma^*$ has a natural $N$-ary tree structure  with the root $\vartheta$.
For $x = i_1 \cdots i_n \in X$, write $r_x :=  r_{i_1} r_{i_2} \cdots r_{i_n}$, $S_x := S_{i_1} \circ S_{i_2} \circ \cdots \circ S_{i_n}$ and $K_x := S_x(K)$ for short.

\vspace{0.1cm}

 For a self-similar set $K$ of a {\it homogeneous}  IFS $\{S_j\}_{j=1}^N$ (i.e., $r_j = r$ for all $j$), coding the iterations by the tree of symbolic space $\Sigma^*$ is natural, as each $K_x$ with $x \in \Sigma^n$ has a constant diameter $r^n|K|$.
But in a non-homogeneous case, the diameters of the cells on each level are not comparable. A common way is to regroup the indices  as follows: let $r_* = \min_{j \in \Sigma}\{r_j\}$, $X_0 = \{\vartheta\}$,
\begin{equation} \label{eq7.2}
 X_n = \{ x= i_1i_2\cdots i_k \in \Sigma^*: r_x \leq r_*^n< r_{i_1} r_{i_2}\cdots r_{i_{k-1}}\}, \quad n \geq 1,
\end{equation} and $X= \bigcup_{n=0}^\infty X_n$. This $X$ has a natural tree structure $E_v$, and the diameters of the cells in $\{K_x\}_{x \in X_n}$ are comparable with $r_*^n$.

 \medskip
A contractive IFS  of similitudes is said to satisfy the {\it open set condition} (OSC) if there is a bounded nonempty  open set $U$ such that $S_j (U) \subset  U$ for all $j \in \Sigma$,  and $S_i(U) \cap S_j(U) = \emptyset$ for all $i \not = j$.  The OSC is one of the most fundamental conditions in fractal geometry.  For a self-similar set $K$, the OSC yields an explicit expression of the Hausdorff dimension $s$ of $K$ by $\sum_{j=1}^N r^s_j =1$, and $K$ supports the $s$-Hausdorff measure.
Furthermore, the OSC is equivalent to the following property (cf.~\cite[Theorem 2.2]{Sc}): for any $c>0$, there is a constant $\ell = \ell(c)$ such that
\begin{equation} \label{eq7.3}
\forall \ \eta \in K \ \hbox{and integer} \ n>0, \  B(\eta, cr_*^n) \cap K_x \not = \emptyset \ \hbox {  for at most } \ell \hbox { of }  x \in X_n.
\end{equation}

\medskip
We augment the tree $(X,E_v)$ by a set of horizonal edges
\begin{equation} \label{eq7.4}
E_h = {\bigcup}_{n = 1}^\infty \{ (x, y) \in X_n \times X_n \setminus \Delta: K_x \cap K_y \not = \emptyset\},
\end{equation}
and let $E= E_v\cup E_h$ (cf.~\cite{Ka, LW1}).  It has been  proved that

\medskip
\begin{theorem} \label{th7.1} \hspace{-2mm} {\rm \cite {LW1}}
Let $\{S_j\}_{j=1}^N$ be an IFS of contractive similitudes that satisfies the
OSC. Then $(X, E)$ is a hyperbolic graph, and for the hyperbolic boundary $\partial X$, the canonical identification $\kappa :\partial X \to K$ is a homeomorphism. Furthermore, $\kappa$ is a H\"older equivalence if the IFS satisfies the condition (H), i.e.,
there exists a constant $c>0$ such that
\begin{equation} \label{eqH}
 \forall  \ n>0, \ x,y \in X_n, \ K_x \cap K_y = \emptyset \ \Rightarrow \ {\rm dist}(K_x, K_y) \geq c r_*^n.
\end{equation}
\end{theorem}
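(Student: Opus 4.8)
The plan is to recognize the augmented tree $(X,E)$ of \eqref{eq7.4} as precisely the $AI_\infty$-graph of Section \ref{sec:4} attached to the index map $\Phi(x):=K_x=S_x(K)$ on the regrouped coding tree $(X,E_v)$ of \eqref{eq7.2}, and then to read off all three assertions from Theorems \ref{th5.4} and \ref{th4.8}. First I would record the two structural properties of $\Phi$. Self-similarity $K=\bigcup_{j}S_j(K)$, iterated along codings, gives $K_x=\bigcup_{y\in\mathcal J_1(x)}K_y$, so $\Phi$ is saturated; and since $r_*^{|x|+1}<r_x\le r_*^{|x|}$ for $x\in X_n$ forces $|\Phi(x)|_\rho=r_x|K|\le |K|\,e^{-b|x|}$ with $b:=|\log r_*|$, the family $\{\Phi(x)\}_{x\in X}$ is of exponential type-$(b)$. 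As the horizontal set \eqref{eq7.4} is verbatim the set $E_h^{(\infty)}$ of \eqref{eq4.5} for this $\Phi$, the graph $(X,E)$ is exactly the associated $AI_\infty$-graph.

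The hyperbolicity and the bijectivity of $\kappa$ then hinge on translating the OSC into condition $(S_b)$. I would use Schief's characterization \eqref{eq7.3}: for each $c>0$ there is $\ell(c)$ with $\#\{x\in X_n:K_x\cap B(\eta,cr_*^n)\neq\emptyset\}\le\ell(c)$ for all $\eta\in K$ and $n>0$. Given any $F\subset M$ with $|F|_\rho<ce^{-bn}=cr_*^n$ that meets $K$, choosing $\eta\in F\cap K$ yields $F\subset B(\eta,cr_*^n)$, whence $\{x\in X_n:K_x\cap F\neq\emptyset\}\subset\{x\in X_n:K_x\cap B(\eta,cr_*^n)\neq\emptyset\}$ has at most $\ell(c)$ elements. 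This is exactly \eqref{eq5.1}, so $(S_b)$ holds with $b=|\log r_*|$. Theorem \ref{th5.4}(i) now gives that the $AI_\infty$-graph is hyperbolic and admissible, so by Definition \ref{de4.2} the induced map $\kappa:\partial X\to K$ is a well-defined bijection; that it is a homeomorphism is Proposition \ref{th4.3}, which I may invoke.

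For the H\"older equivalence under condition (H), I would note that $e^{-b|x|}=r_*^{|x|}$ makes the implication \eqref{eqH} literally the case $k=1$ of the separation hypothesis \eqref{eq4.11}, namely \eqref{eq4.13}, with $\gamma=c$. Since $\Phi$ is saturated, of exponential type-$(b)$, and satisfies \eqref{eq4.11} with $k=1$, the implication (ii)$\Rightarrow$(i) of Theorem \ref{th4.8} furnishes both that $(X,E)$ is $(m,1)$-departing and that $\kappa$ is a H\"older equivalence with exponent $b/a$, i.e.\ $\rho(\kappa(\xi),\kappa(\eta))^{a/b}\asymp\theta_a(\xi,\eta)$; this is the sought conclusion.

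Essentially all of the work is thus a bookkeeping of identifications, and the one step requiring genuine care---the point I expect to be the main obstacle---is the faithful passage from the OSC to $(S_b)$. One must reconcile that \eqref{eq7.3} is stated with metric balls centred on $K$ while $(S_b)$ quantifies over arbitrary subsets $F\subset M$ of controlled diameter, and must match the abstract rate $e^{-bn}$ of the exponential-type framework to the geometric rate $r_*^n$ through the calibration $b=|\log r_*|$ together with the two-sided estimate $r_*^{n+1}<r_x\le r_*^n$ on level-$n$ cells. Once these constants are tracked, the three conclusions follow directly by citation.
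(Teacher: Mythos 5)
Your proposal is correct, and it takes a genuinely different route from the paper's treatment: the paper gives no proof of Theorem \ref{th7.1} at all, citing it as a known result of \cite{LW1}, where the argument is direct (horizontal geodesics are bounded using the OSC, and the boundary identification and H\"older estimates are carried out by hand). You instead specialize the paper's general machinery, which is exactly the specialization the paper anticipates in the Remark after Theorem \ref{th4.8} and in the comment following Definition \ref{de5.1}. Your individual steps check out: the identification of \eqref{eq7.4} with \eqref{eq4.5} for $\Phi(x)=K_x$ is verbatim; saturation and exponential type-$(b)$ with $b=|\log r_*|$ follow from self-similarity and \eqref{eq7.2}; the passage from Schief's characterization \eqref{eq7.3} of the OSC to condition $(S_b)$ is sound (the reduction to sets $F$ meeting $K$ is harmless since $K_x\subset K$, and then $F\subset B(\eta,cr_*^n)$ for $\eta\in F\cap K$); Theorem \ref{th5.4}(i) then yields hyperbolicity and admissibility, and Theorem \ref{th4.8}(ii)$\Rightarrow$(i) yields the H\"older equivalence, since condition (H) is exactly \eqref{eq4.13} in the present notation (take $\gamma<c$ to honor the strict inequality in \eqref{eq4.11}). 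What your route buys is brevity and the placement of the old theorem inside the new framework; what the original route buys is self-containedness. The one weak link is your appeal to Proposition \ref{th4.3} for the homeomorphism: the paper explicitly defers that proof to \cite{KLWa} and states it is not needed in this paper, so as written your derivation rests on a statement proved nowhere in the text. The repair is easy and stays entirely inside the paper: by Corollary \ref{th4.7} (hyperbolicity plus exponential type-$(b)$), $\kappa:\partial X\to K$ is a continuous bijection, and since $\partial X$ is compact and $(K,\rho)$ is Hausdorff, $\kappa$ is automatically a homeomorphism.
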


\medskip

The identification of $K$ and $\partial X$ has been applied to study the Lipschitz equivalence of self-similar sets {\cite {LL, DLL}}. If we enlarge the horizontal edge set to be
\begin{equation} \label{eq7.5}
E'_h = {\bigcup}_{n=1}^\infty \{ (x, y) \in X_n \times X_n \setminus \Delta:  {\rm dist} (K_x, K_y) \leq \gamma r_*^n\}
\end{equation}
for some $\gamma>0$ and let $E'= E_v \cup E'_h$ (cf.~\cite{LW3}), then we can improve Theorem \ref {th7.1} as

\begin {theorem}  \label{th7.2} \hspace{-2mm} {\rm \cite{LW3}}
For any IFS $\{S_j\}_{j=1}^N$ of similitudes, $(X, E')$ is a hyperbolic graph, and
the canonical identification $\iota : \partial X \to K$ is a H\"older equivalence. Moreover, $(X,E')$ has bounded degree if and only if $\{S_j\}_{j=1}^N$ satisfies the OSC.
\end{theorem}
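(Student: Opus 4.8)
The plan is to recognize $(X,E')$ as an instance of the $AI_b$-graph of Section \ref{sec:4}, so that the theorem becomes a direct corollary of the results already established. First I would put $\Phi(x):=K_x=S_x(K)$ on the regrouped tree $X=\bigcup_n X_n$ of \eqref{eq7.2}. The inclusion $S_x(K)\subset K$ gives $K_x=\Phi(x)\cap K=\Phi(x)$, and the nestedness $\Phi(y)\subset\Phi(x)$ for $y\in\mathcal J_1(x)$ together with $|\Phi(x_i)|_\rho=r_{x_i}|K|\to0$ along any ray shows that $\Phi$ is an index map in the sense of Definition \ref{de4.1}. Since the $S_j$ are similitudes, $|\Phi(x)|_\rho=r_x|K|$, and \eqref{eq7.2} forces $r_*^{\,n+1}<r_x\le r_*^{\,n}$ for $x\in X_n$; hence $|\Phi(x)|_\rho\le|K|\,e^{-b|x|}$ with $b:=|\log r_*|$, i.e.\ $\Phi$ is of exponential type-$(b)$. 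Finally, because $e^{-bn}=r_*^{\,n}$, a term-by-term comparison of \eqref{eq7.5} with the definition \eqref{eq4.6} of $E_h^{(b)}$ shows $E'_h=E_h^{(b)}$ (for the same constant $\gamma$), so $(X,E')$ is exactly the $AI_b$-graph attached to $\Phi$.

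With this identification, Theorem \ref{th4.5} applies verbatim: $(X,E')$ is $(m,1)$-departing, hence hyperbolic by Theorem \ref{th2.11}, and the canonical bijection $\kappa$---which is the identification $\iota$ of the statement---is a H\"older equivalence, $\rho(\kappa(\xi),\kappa(\eta))^{a/b}\asymp\theta_a(\xi,\eta)$. This disposes of the first two assertions.

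For the bounded-degree dichotomy, Theorem \ref{th5.5} reduces the claim to showing that condition $(S_b)$ is equivalent to the OSC, and here I would invoke the characterization \eqref{eq7.3} of the OSC: for each $c>0$ there is a bound $\ell(c)$ on $\#\{x\in X_n:K_x\cap B(\eta,cr_*^{\,n})\ne\emptyset\}$, uniform in $\eta\in K$ and $n$. One implication is immediate: applying $(S_b)$ to the set $F=B(\eta,cr_*^{\,n})$, whose diameter is at most $2c\,e^{-bn}$, yields \eqref{eq7.3} with $\ell(c)=\bar\ell(3c)$. For the converse, given $F$ with $|F|_\rho<c\,e^{-bn}$ I would assume $K\cap F\ne\emptyset$ (otherwise the count is $0$, since each $K_x\subset K$), pick $\eta\in K\cap F$, and note $F\subset B(\eta,cr_*^{\,n})$; then \eqref{eq7.3} bounds $\#\{x\in X_n:K_x\cap F\ne\emptyset\}$ by $\ell(c)$, giving $(S_b)$ with $\bar\ell(c)=\ell(c)$. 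Combining with Theorem \ref{th5.5} proves that $(X,E')$ has bounded degree if and only if the OSC holds.

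Since every step is either a direct appeal to the $AI_b$-theory or a one-line covering argument, there is no genuine obstacle; the whole point of the general framework is to absorb this theorem. The only place demanding care is the bookkeeping of normalizations---identifying $b=|\log r_*|$ so that $e^{-bn}=r_*^{\,n}$, and matching the constant $\gamma$ of \eqref{eq7.5} with that in \eqref{eq4.6}---after which the equivalence $(S_b)\Leftrightarrow\mathrm{OSC}$ via \eqref{eq7.3} finishes the proof.
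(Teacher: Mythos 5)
Your proposal is correct, and it is worth noting that the paper itself gives no proof of Theorem \ref{th7.2}: the result is stated in the Appendix as known background and attributed to \cite{LW3}, where it was originally proved by a direct analysis of the augmented tree (predating the general framework here). What you do instead is re-derive it as a corollary of the paper's own machinery, which is precisely the absorption the authors advertise their framework to achieve. Your identification of $(X,E')$ with the $AI_b$-graph of the index map $\Phi(x)=S_x(K)$ is accurate: for similitudes $|\Phi(x)|_\rho=r_x|K|$, and \eqref{eq7.2} gives $r_*^{n+1}<r_x\le r_*^n$ for $x\in X_n$, so $\Phi$ is of exponential type-$(b)$ with $b=|\log r_*|$, $e^{-bn}=r_*^n$, and \eqref{eq7.5} coincides term by term with \eqref{eq4.6}; Theorem \ref{th4.5} then delivers hyperbolicity and the H\"older equivalence. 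For the bounded-degree dichotomy, your covering argument showing that condition $(S_b)$ is equivalent to Schief's characterization \eqref{eq7.3} of the OSC is sound in both directions (a ball of radius $cr_*^n$ has diameter $<3ce^{-bn}$; conversely any $F$ meeting $K$ with $|F|_\rho<ce^{-bn}$ lies in $B(\eta,cr_*^n)$ for $\eta\in K\cap F$, while $F\cap K=\emptyset$ makes the count zero), and it has the extra merit of not needing the doubling property of $(K,\rho)$ that Proposition \ref{th5.3} would require if one instead routed the equivalence through condition $(S_b')$. Combined with Theorem \ref{th5.5}, this settles the last assertion. In short: the paper buys this theorem by citation; you buy it as a consequence of Theorems \ref{th4.5} and \ref{th5.5}, which is the self-contained route within this paper and confirms that the general theory genuinely subsumes the IFS case.
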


\bigskip

In \cite {KLW1}, thanks to the bounded degree property, we can introduce a class of transient reversible random walks on $(X,E')$ such that the Martin boundary, $\partial X$ and $K$ are homeomorphic, by which we  obtain a jump kernel (i.e., Na\"im  kernel) to study the  induced energy form on $K$.

\bigskip

For  IFS $\{S_j\}_{j=1}^N$ with overlaps, it is possible that $S_x = S_y$ for some different $x,y \in X$, where $X$ is defined in \eqref{eq7.2}. For example, let $M=\mathbb R$, $S_1(x) = r x$ and $S_2(x) = r x + (1-r)$, where $r = \frac{\sqrt{5} -1}{2}$ is the golden ratio. Then $S_{122} = S_{211}$ (see also Example \ref{ex2.2}).
In this case, we can define an equivalence relation $\simeq$ on $X$ by $x \simeq y$ if and only if $S_x = S_y$. Then there is a natural vertical graph $(X^\sim, E_v^\sim)$ as the quotient of $(X,E_v)$ with respect to $\simeq$, which is not a tree unless the relation $\simeq$ is trivial \cite{LW3}.  It was proved in \cite{Wa} that the associated augmented tree of \eqref {eq7.4} in $(X^\sim, E^\sim)$ is hyperbolic if the self-similar set $K$ has positive Lebesgue measure, or
the IFS $\{S_j\}_{j=1}^N$ satisfies the {\it weak separation condition} (WSC) (cf.~\cite {LN},\cite[Theorem 2.1]{LW}), i.e., the condition \eqref{eq7.3} with $X_n$ replaced by the quotient $X_n /\simeq$.
Some more variants were discussed in \cite {Wa}.

\bigskip
\bigskip

\noindent {\bf Acknowledgements}: The authors would like to thank Professor Alexander Grigor'yan for many valuable discussions. They also like to extend their thanks to Doctor Leung-Fu Cheung, Professors Qingsong Gu and Sze-Man Ngai for going through the manuscript and making some suggestions.

\bigskip
 {\small\bibliographystyle{amsplain}

}

\bigskip
\bigskip

\noindent Shi-Lei Kong, Fakult{\"a}t f{\"u}r Mathematik, Universit{\"a}t Bielefeld, Postfach 100131, 33501 Bielefeld, Germany. \\
skong@math.uni-bielefeld.de

\bigskip

\noindent Ka-Sing Lau, Department of Mathematics, The Chinese University of Hong Kong, Hong Kong.\\
\& Department of Mathematics, University of Pittsburgh, Pittsburgh, PA 15260, USA. \\
kslau@math.cuhk.edu.hk

\bigskip

\noindent Xiang-Yang Wang, School of Mathematics, Sun Yat-Sen University, Guangzhou, China.\\
mcswxy@mail.sysu.edu.cn

\end{document}